\newtheorem{thm}{Theorem}
\newtheorem{defn}{Definition}
\newtheorem{prop}{Proposition}
\newtheorem{lem}{Lemma}
\newtheorem{rem}{Remark}
\definecolor{darkgreen}{rgb}{0,0.4,0}
\renewcommand{\geq}{\geqslant}
\renewcommand{\leq}{\leqslant}
\begin{document}

\title{\LARGE{{\bf Counting walks in a quadrant: a unified
               approach via boundary value problems}}}

\author{Kilian Raschel\footnote{Laboratoire de Probabilit\'es et
        Mod\`eles Al\'eatoires, Universit\'e Pierre et Marie Curie,
        4 Place Jussieu, 75252 Paris Cedex 05, France.
        E-mail: \href{mailto:kilianraschel@yahoo.fr}{kilian.raschel@upmc.fr}}}

\date{February 13, 2011}

\maketitle

\footnotetext{{\it AMS 2000 Subject Classification:} primary 05A15; secondary 30F10, 30D05}

{\small
\noindent{{\bf Abstract.}
     The aim of this article is to introduce a unified method to obtain
     explicit integral representations of the trivariate generating function
     counting the walks with small steps which are confined to a quarter plane.
     For many models, this yields for the first time an explicit
     expression of the counting generating function. Moreover, the nature of the integrand
     of the integral formulations is shown to be directly
     dependent on the finiteness of a naturally attached group
     of birational transformations as well as on the sign of the covariance of the walk. }}

\medskip

{\small
\noindent{{\bf Keywords.} Lattice walk, counting generating function,
boundary value problem, conformal mapping,
Weierstrass elliptic function, Riemann surface, uniformization}}

\vspace{1.5mm}

\section{Introduction}
\label{Introduction}

The enumeration of planar lattice walks is a classical topic in combinatorics. For
a given set $\mathcal{S}$ of steps, it is a matter of counting the numbers of paths
of a certain length, having jumps in $\mathcal{S}$,  starting
and ending at some arbitrary points, and possibly restricted to a region of the plane.
There are two main questions:
\begin{itemize}
\item[---] How many such paths exist?
\item[---] Is the underlying generating function rational, algebraic,
holonomic (\textit{i.e.}, a solution of a linear differential equation with polynomial
coefficients) or non-holonomic?
\end{itemize}
For instance, if the paths are not restricted to a region,
or if they are constrained to a half-plane, the counting generating function
can then be made explicit and is rational or algebraic \cite{MBM2},
respectively.
It is natural to consider then the walks confined to the intersection of two half-planes,
as the quadrant $\mathbb{Z}_{+}^{2}$. The situation seems richer: some
walks admit an algebraic counting function, see \cite{FL1,Gessel}
for the walk with unit step set $\mathcal{S}=\{{\sf W}, {\sf NE}, {\sf S}\}$,
while others admit a counting function that is not even holonomic, see \cite{MBM2}
for the knight walk. Henceforth, we focus on these walks staying in $\mathbb{Z}_{+}^{2}$.

Bousquet-M\'{e}lou and Mishna have recently \cite{BMM}
initiated a systematic study of the walks confined to $\mathbb{Z}_{+}^{2}$, starting at
the origin and having small steps, which means that the set of admissible steps $\mathcal{S}$
is included in the set of the eight nearest neighbors, \textit{i.e.},
     \begin{equation*}
          \mathcal{S}\subset \{{\sf W}, {\sf NW}, {\sf N},{\sf NE}, {\sf E}, {\sf SE},{\sf S}, {\sf SW}\}\footnote{For the step set $\mathcal{S}$, we shall equally denote it
by using the cardinal points $\{{\sf W}, {\sf NW}, {\sf N},{\sf NE}, {\sf E}, {\sf SE},{\sf S}, {\sf SW}\}$ or
the cartesian points $\{-1,0,1\}^{2}\setminus \{(0,0)\}$.}.
     \end{equation*}
On the boundary, the jumps are the natural
ones: the steps that would take the walk out $\mathbb{Z}_{+}^{2}$ are discarded.
\begin{figure}[t]
\begin{center}
\begin{picture}(340.00,68.00)
\includegraphics{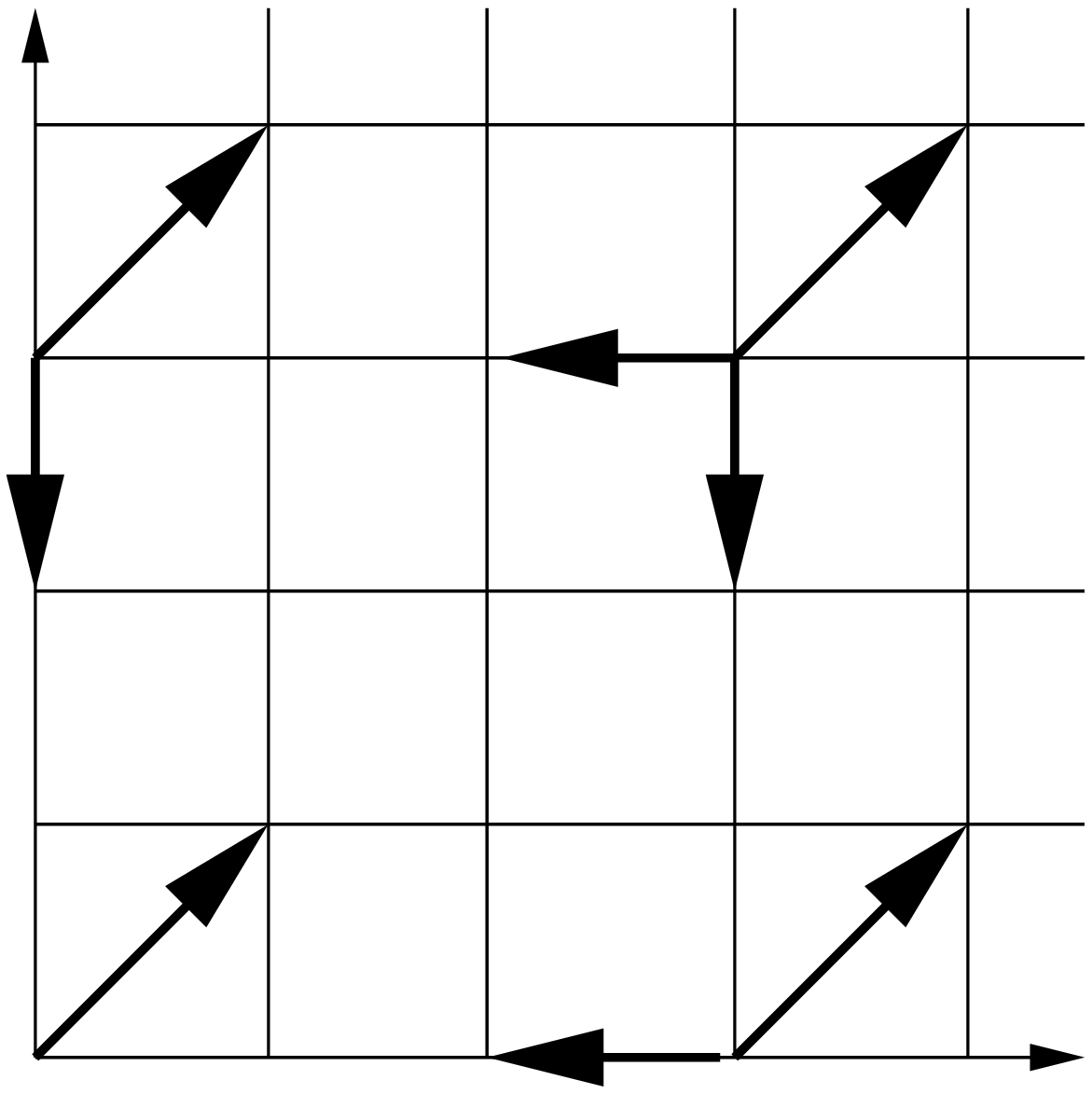}
\hspace{45mm}
\includegraphics{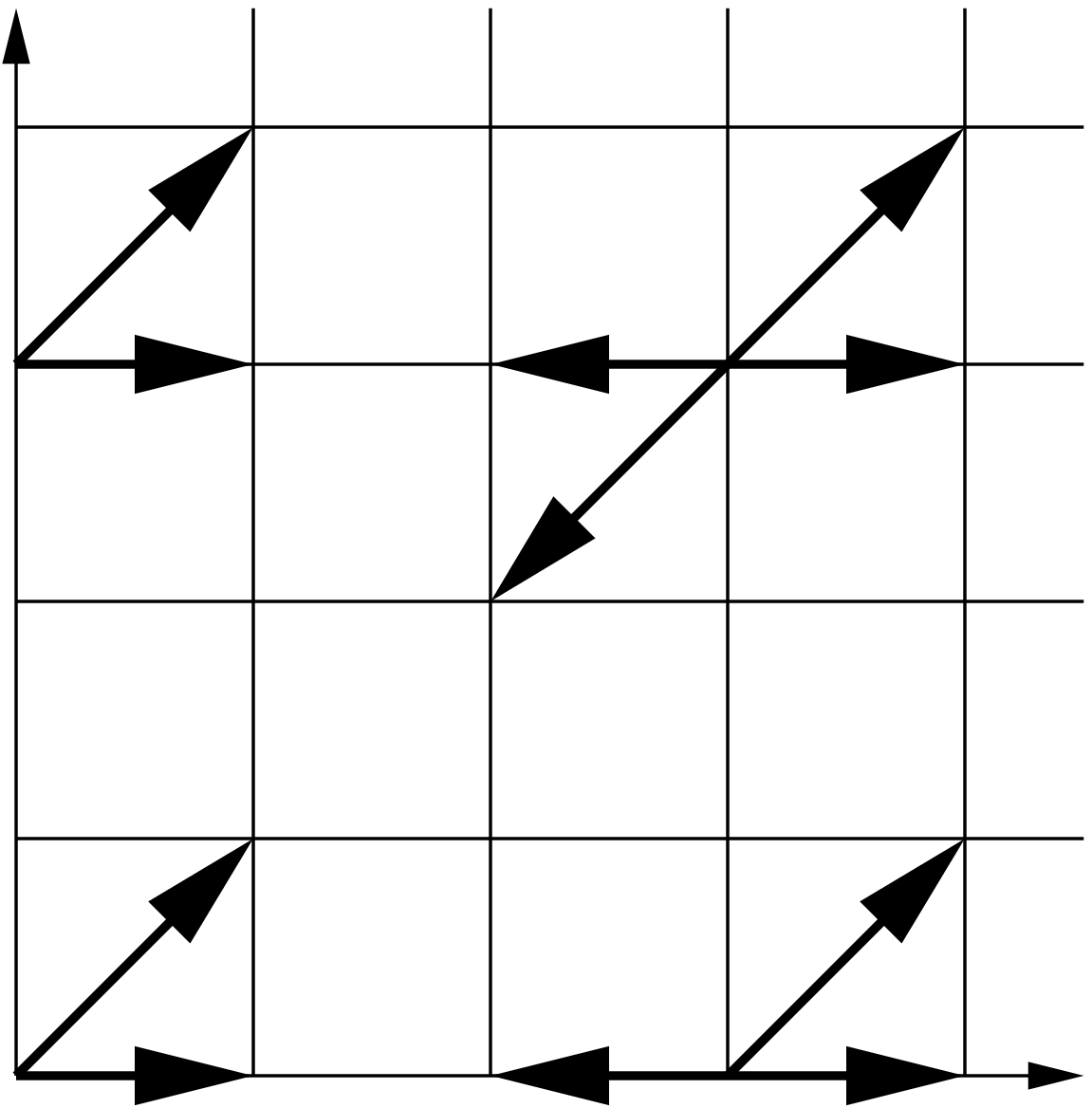}
\hspace{45mm}
\includegraphics{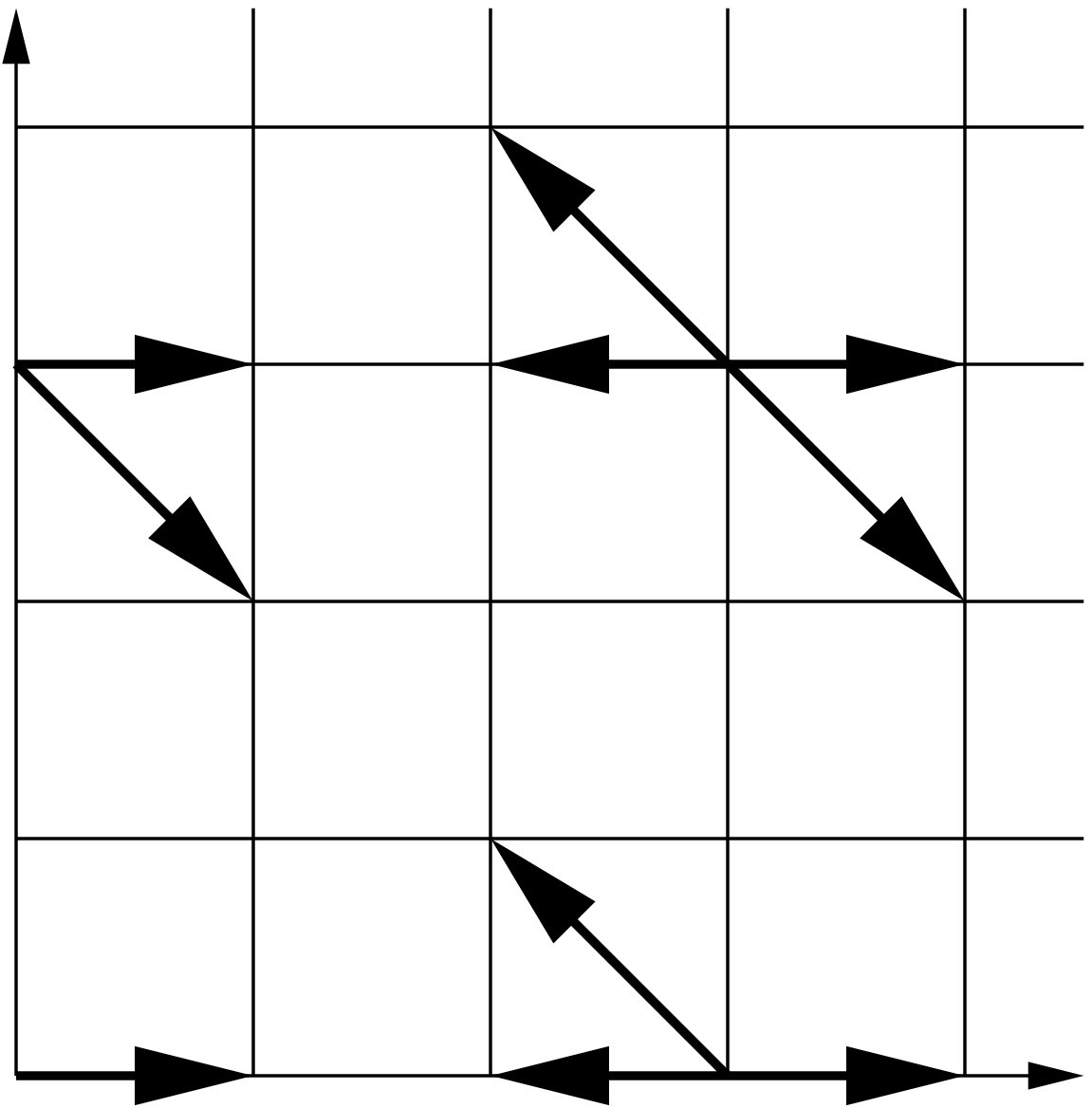}
\end{picture}
\end{center}
\caption{Three famous examples, known as Kreweras', Gessel's and Gouyou-Beauchamps' walks.
They have been, and
are still, the object of many studies, see, \textit{e.g.}, \cite{Mi,FL2,FL1}, \cite{BK,KKZ,KRG} and
\cite{Stache,KRSp4}, respectively.}
\label{ExExEx}
\end{figure}
There are $2^{8}$ such models.
Of these, there are obvious symmetries and Bousquet-M\'{e}lou and Mishna \cite{BMM}
show that there are in fact $79$ types of essentially distinct walks---we will often refer to these $79$ walks tabulated in \cite{BMM}.

The central object for the study of these $79$ walks is the following: denoting
by $q(i,j;n)$ the number of paths confined to $\mathbb{Z}_{+}^{2}$, having length $n$, starting at $(0,0)$ and ending at
$(i,j)$, their \emph{generating function} is defined as
     \begin{equation}
     \label{definition_generating_function}
          Q(x,y;z)=\sum_{i,j,n\geq 0}q(i,j;n)x^{i}y^{j}z^{n}.
     \end{equation}
\begin{prop}
We have
     \begin{equation}
     \label{functional_equation}
          xyz\left[\sum_{(i,j)\in\mathcal{S}}x^{i}y^{j}-1/z\right]Q(x,y;z)=
          c(x;z)Q(x,0;z)+
          \widetilde{c}(y;z)Q(0,y;z)-
          z\delta Q(0,0;z)-x y,
     \end{equation}
where we have noted
     \begin{equation}
     \label{c_c_tilde}
          c(x;z)=zx\sum_{(i,-1)\in\mathcal{S}}x^{i},
          \ \ \ \ \ \widetilde{c}(y;z)=zy\sum_{(-1,j)\in\mathcal{S}}y^{j},
          \ \ \ \ \ \delta =\left\{\begin{array}{ccc}
          1 & \text{if} &         {\sf SW}\in\mathcal{S},\\
          0 & \text{if} &   {\sf SW}\notin\mathcal{S}.
          \end{array}\right.
     \end{equation}
\end{prop}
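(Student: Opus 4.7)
The plan is to derive the functional equation by a ``one-step'' combinatorial decomposition of the walks, combined with careful inclusion--exclusion to account for the boundary. First I would write down the recurrence satisfied by the coefficients $q(i,j;n)$: a walk of length $n+1$ ending at $(i,j)\in\mathbb{Z}_{+}^{2}$ is obtained from a walk of length $n$ ending at some $(i',j')\in\mathbb{Z}_{+}^{2}$ by appending a step $(k,l)\in\mathcal{S}$ with $i=i'+k$ and $j=j'+l$. This yields, for every $n\geq 0$,
\[ q(i,j;n+1) \;=\; \sum_{\substack{(k,l)\in\mathcal{S}\\ (i-k,j-l)\in\mathbb{Z}_{+}^{2}}} q(i-k,j-l;n), \]
with the understanding that $q(i,j;n)=0$ whenever $(i,j)\notin\mathbb{Z}_{+}^{2}$.

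Next I would multiply this recurrence by $x^{i}y^{j}z^{n+1}$ and sum over $i,j,n\geq 0$. The left-hand side becomes $Q(x,y;z)-1$, since $q(i,j;0)=\delta_{(i,j),(0,0)}$. On the right-hand side, the change of variables $i'=i-k$, $j'=j-l$ turns each $(k,l)$-contribution into $zx^{k}y^{l}$ times a partial sum of $Q(x,y;z)$ restricted to the set where simultaneously $i'+k\geq 0$ and $j'+l\geq 0$. For steps with $k,l\geq 0$ this partial sum equals $Q(x,y;z)$ itself. The only corrections arise from steps having $k=-1$ or $l=-1$: when $k=-1$ the constraint $i'\geq 1$ replaces $Q$ by $Q-Q(0,y;z)$; when $l=-1$ it replaces $Q$ by $Q-Q(x,0;z)$; and for the southwest step $(-1,-1)$ both restrictions apply simultaneously, producing by inclusion--exclusion the extra $+Q(0,0;z)$ that is responsible for the $\delta$-term in \eqref{c_c_tilde}.

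Collecting all contributions, the identity obtained is
\[ Q-1 \;=\; zS(x,y)Q - zx^{-1}\Bigl[\sum_{(-1,l)\in\mathcal{S}} y^{l}\Bigr] Q(0,y;z) - zy^{-1}\Bigl[\sum_{(k,-1)\in\mathcal{S}} x^{k}\Bigr] Q(x,0;z) + z\delta x^{-1}y^{-1}Q(0,0;z), \]
where $S(x,y):=\sum_{(i,j)\in\mathcal{S}} x^{i}y^{j}$. Multiplying through by $xy$, recognizing $c(x;z)$ and $\widetilde{c}(y;z)$ from \eqref{c_c_tilde}, and rearranging then yields \eqref{functional_equation}. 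I do not foresee any real obstacle: the manipulation is purely formal at the level of power series in $z$, and the only delicate point is the joint correction at the corner coming from the southwest step, which is exactly what the indicator $\delta$ encodes.
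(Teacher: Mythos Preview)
Your proposal is correct and follows exactly the approach the paper intends: the paper does not spell out a proof but simply remarks that \eqref{functional_equation} ``follows from the step by step construction of the walks'' and refers to \cite[Section 4]{BMM}, which is precisely the one-step recurrence with boundary corrections that you have written out. The only thing to note is that the paper itself provides no details beyond this citation, so your write-up is in fact more explicit than what appears there.
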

This functional equation is the fundamental starting point of our study---and is also
so for almost every other works on the topic. It relates the trivariate generating function
$Q(x,y;z)$ to the bi- and univariate generating functions $Q(x,0;z)$,
$Q(0,y;z)$ and $Q(0,0;z)$ counting the walks which end on the borders.
Notice that \eqref{functional_equation} simply follows from the step by
step construction of the walks; its proof may be found in \cite[Section 4]{BMM}.
The polynomial
     \begin{equation}
     \label{def_kernel}
          xyz\left[\sum_{(i,j)\in\mathcal{S}}x^{i}y^{j}-1/z\right]
     \end{equation}
appearing in \eqref{functional_equation} is usually called the \emph{kernel} of the walk.
If $k$ is the cardinal of $\mathcal{S}$, then \eqref{functional_equation}
is valid at least on $\{|x|\leq 1,|y|\leq 1, |z|<1/k\}$, since clearly $q(i,j;n)\leq {k}^{n}$.

In this way, for the purpose of answering both questions stated at the
beginning of this paper, it suffices to
solve \eqref{functional_equation}. A key idea then is to consider a
certain group, introduced in \cite{MAL} in a probabilistic context,
and called the \emph{group of the walk}. This group of
birational transformations of $\mathbb{C}^{2}$,  which leaves
invariant the step generating function $\sum_{(i,j)
\in\mathcal{S}}x^{i}y^{j}$, is the group $W=\langle
\Psi,\Phi\rangle$ generated by 
     \begin{equation}
     \label{def_generators_group}
          \Psi(x,y)=\left(x,\frac{\sum_{(i,-1)\in\mathcal{S}}x^{i}}
          {\sum_{(i,+1)\in\mathcal{S}}x^{i}}\frac{1}{y}\right),
          \ \ \ \ \ \Phi(x,y)=\left(\frac{\sum_{(-1,j)\in\mathcal{S}}y^{j}}
          {\sum_{(+1,j)\in\mathcal{S}}y^{j}}\frac{1}{x},y\right).
     \end{equation}
Obviously $\Psi\circ\Psi=\Phi\circ\Phi=\text{id}$, and $W$ is a dihedral group---of
order even and at least four. This order is calculated in \cite{BMM} for each
of the 79 cases: 23 walks admit a finite group (of order four, six or eight),
and the 56 others have an infinite group.

For the 23 walks with a finite group, the answers to both questions (regarding explicit expression and
nature of the function \eqref{definition_generating_function}) have been given recently. Indeed, the
article \cite{BMM} successfully treats 22 of the 23 models associated with a finite group: the series
\eqref{definition_generating_function} is made explicit and is shown to be either algebraic
or transcendental but holonomic. As for the 23rd walk (namely, Gessel's walk represented in
Figure \ref{ExExEx}), Bostan and Kauers \cite{BK} have given a computer-aided proof of the algebraicity of the function
\eqref{definition_generating_function}. Furthermore, using a powerful computer
algebra system, they have made explicit minimal polynomials. Thanks to these polynomials, van
Hoeij \cite[Appendix]{BK} has then managed to express the function \eqref{definition_generating_function} by radicals. At the same time, we gave in \cite{KRG}
an explicit integral representation of \eqref{definition_generating_function} for Gessel's walk,
this without computer help.
Based on ideas of \cite[Chapter 4]{FIM}, alternative proofs for the nature of \eqref{definition_generating_function}
for these 23 cases are given in \cite{FR}.
Moreover, in the work in preparation \cite{CET}, Bostan \emph{et al.}\ obtain
integral representations of the function \eqref{definition_generating_function} for the 23 walks
having a finite group, by using a mathematical and algorithmic method, based on creative telescoping
and on the resolution of differential equations of order two in terms of hypergeometric functions.

\begin{figure}[t]
\begin{center}
\begin{picture}(427.00,68.00)
\includegraphics{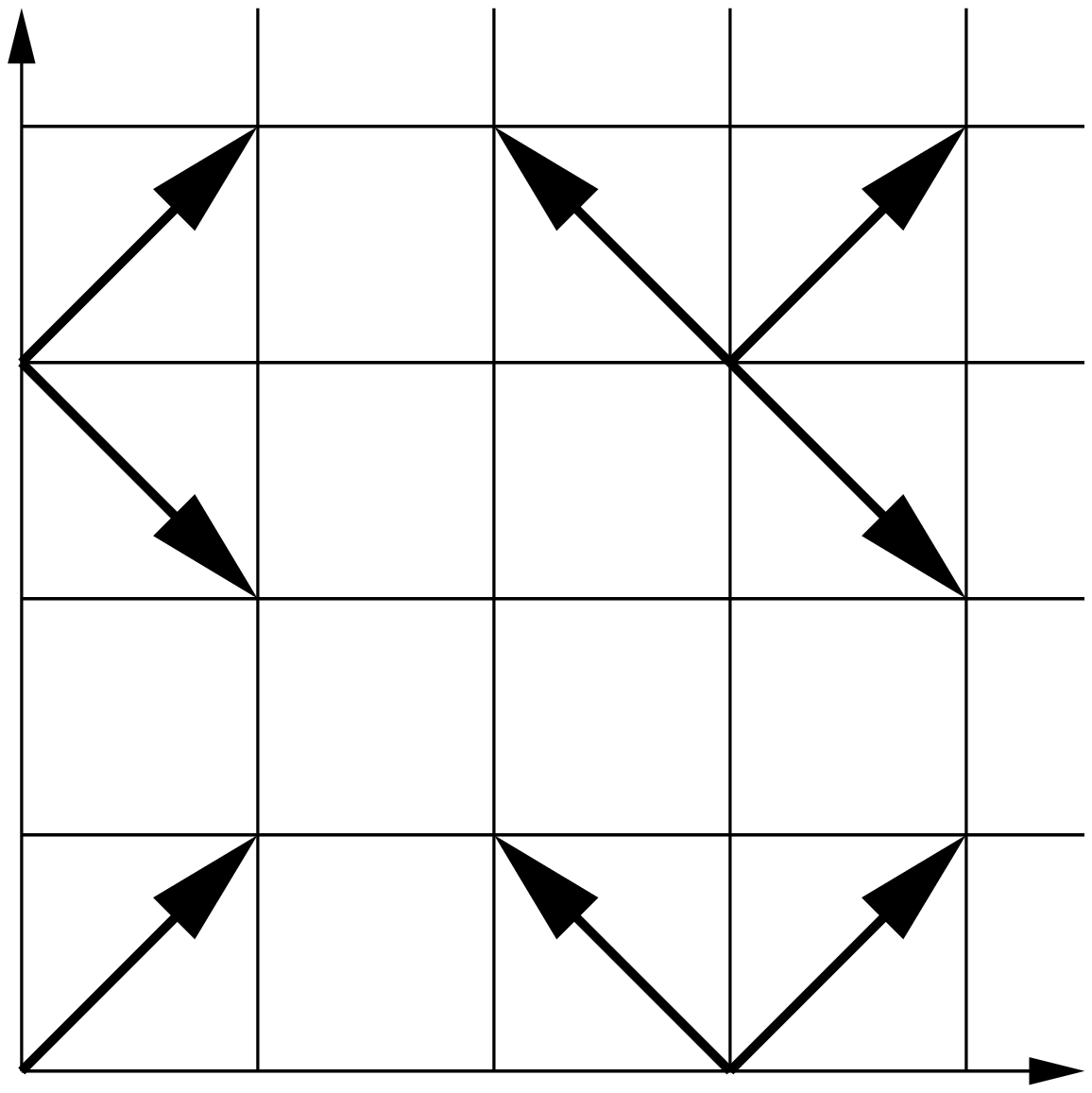}
\hspace{29mm}
\includegraphics{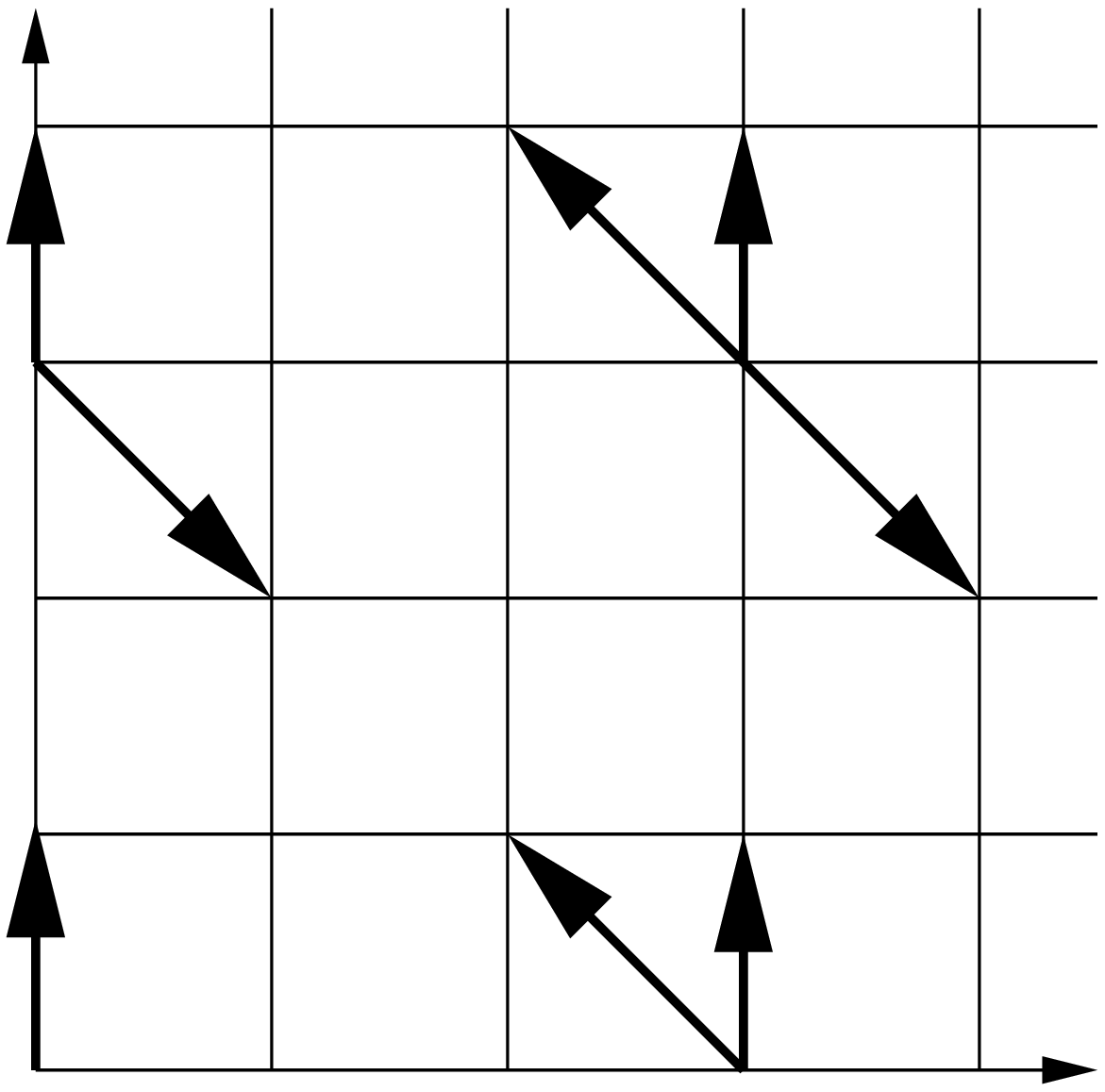}
\hspace{29mm}
\includegraphics{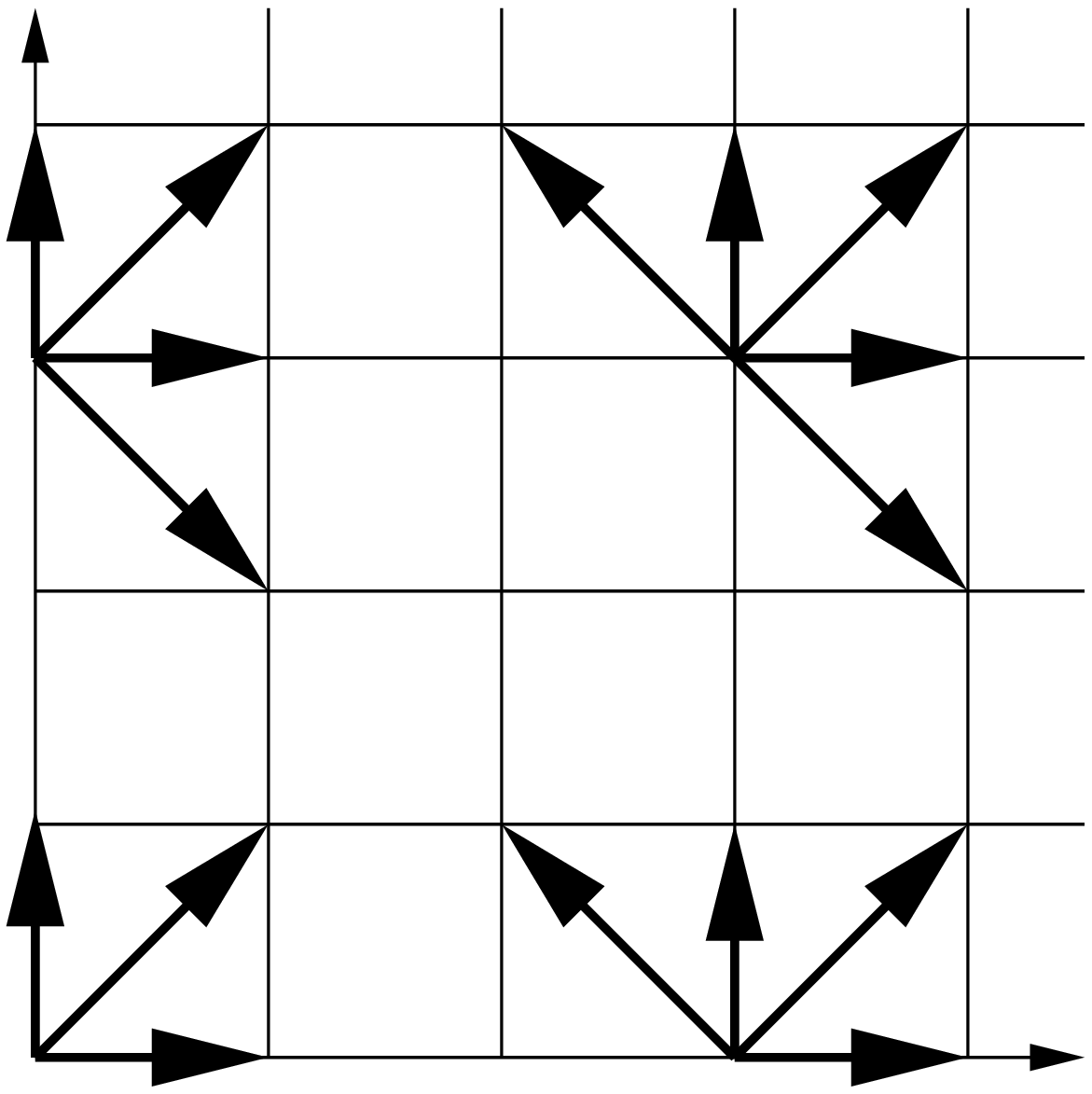}
\hspace{29mm}
\includegraphics{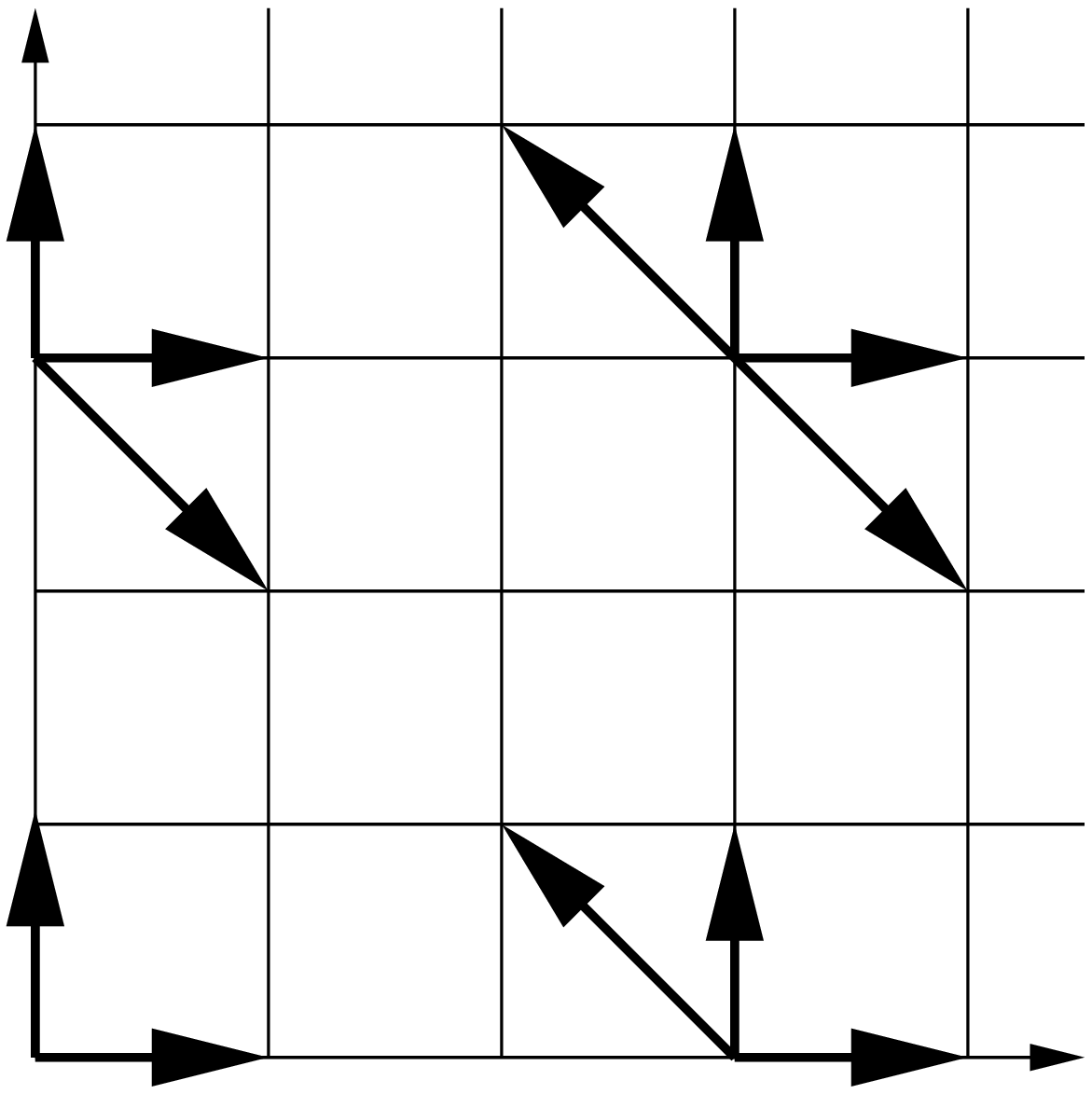}
\hspace{29mm}
\includegraphics{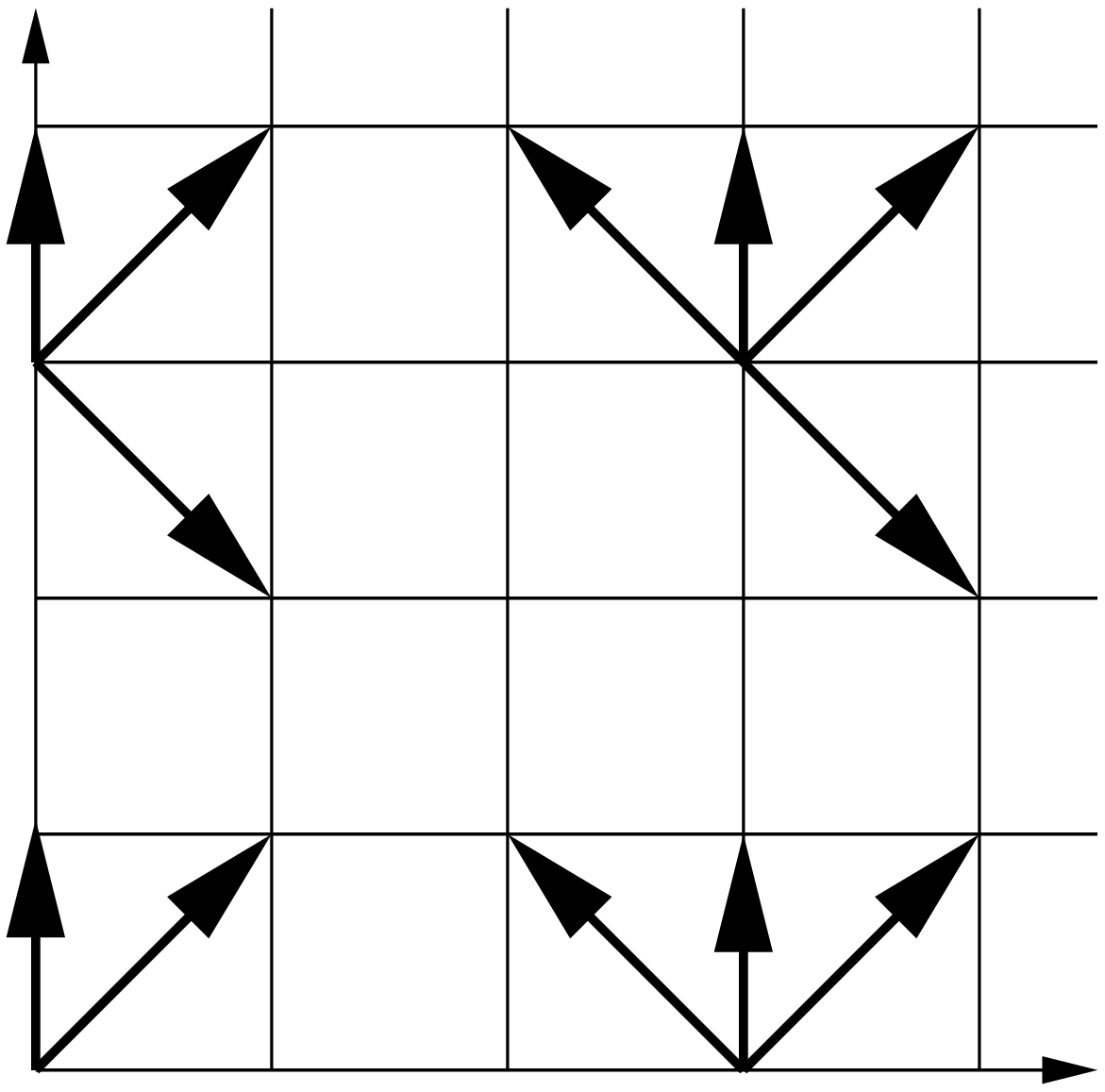}
\end{picture}
\end{center}
\caption{The $5$ singular walks in the classification of \cite{BMM}}
\label{The_five_singular_walks}
\end{figure}

Concerning the $56$ walks with an infinite group, the 5 represented in Figure \ref{The_five_singular_walks}
are special; they are called \emph{singular}.
Of these, $2$ cases are solved: in \cite{MM2}, Mishna and Rechnitzer have considered the walks with step sets $\mathcal{S}=$
$\{{\sf NW},{\sf NE},{\sf SE}\}$ and $\{{\sf NW},{\sf N},{\sf SE}\}$, have made explicit the series
\eqref{definition_generating_function}, and have shown that it is non-holonomic.
What is left is $54$ walks whose status is currently unsettled, as regards an explicit expression as well as
holonomicity. However, certain asymptotic conjectures are proposed in \cite{BKA}.

\section{Main results}

The aim of this article is to introduce a unified approach giving an explicit expression
of the generating function \eqref{definition_generating_function} for any of the 79 walks.
We start with the non-singular walks. Please note that important notations appear below the statement.

\begin{thm}
\label{explicit_integral}
Assume that the walk is one of the 74 non-singular walks.
\begin{enumerate}
\item \label{moim} The following integral representations relative to
      $Q(x,0;z)$ and $Q(0,y;z)$ hold:
     \begin{align*}
          &\hspace{-1mm}c(x;z)Q(x,0;z)-c(0;z)Q(0,0;z)=
          \\&\hspace{-1mm}xY_{0}(x;z)+\frac{1}{2\imath \pi}\int_{x_{1}(z)}^{x_{2}(z)}[Y_0(t;z)-Y_1(t;z)]\left[\frac{\partial_{t}w(t;z)}{w(t;z)-w(x;z)}
          -\frac{\partial_{t}w(t;z)}{w(t;z)-w(0;z)}\right]\textnormal{d}t,\\
          &\hspace{-1mm}\widetilde{c}(y;z)Q(0,y;z)-\widetilde{c}(0;z)Q(0,0;z)=
          \\&\hspace{-1mm}X_{0}(y;z)y+\frac{1}{2\imath \pi}\int_{y_{1}(z)}^{y_{2}(z)}
          [X_0(t;z)-X_1(t;z)]
          \left[\frac{\partial_{t}\widetilde{w}(t;z)}{\widetilde{w}(t;z)-
          \widetilde{w}(y;z)}-\frac{\partial_{t}\widetilde{w}(t;z)}
          {\widetilde{w}(t;z)-\widetilde{w}(0;z)}\right]
          \textnormal{d}t.
     \end{align*}

\item The value of $Q(0,0;z)$ is determined as follows.
\begin{itemize}
\item[---] If ${\sf SW}\in \mathcal{S}$\footnote{This condition is equivalent to $\delta=1$ in \eqref{functional_equation},
and to $c(0;z)=\widetilde{c}(0;z)=z$, see \eqref{c_c_tilde}.}, then for any $(x_0,y_0,z_0)\in\{|x|\leq 1,|y|\leq 1, |z|<1/k\}$ at which the
kernel \eqref{def_kernel} vanishes, we have
     \begin{align*}
          Q(0,0;z)=x_0y_0/z-[c(x_0;z)Q(x_0,0;z)&-c(0;z)Q(0,0;z)]/z
          \\-[\widetilde{c}(\hspace{0.3mm}y_0;z)Q(0,\hspace{0.3mm}y_0;z)&-\widetilde{c}(0;z)Q(0,0;z)]/z.
     \end{align*}
\item[---] If ${\sf SW}\notin \mathcal{S}$, then
     \begin{equation*}
          Q(0,0;z)=\lim_{x\to 0} \frac{c(x;z)Q(x,0;z)-c(0;z)Q(x,0;z)}{c(x;z)}.
     \end{equation*}
\end{itemize}
\item The function $Q(x,y;z)$ has the explicit expression
     \begin{equation*}
          Q(x,y;z)=
          \frac{c(x;z)Q(x,0;z)+
          \widetilde{c}(y;z)Q(0,y;z)-
          z\delta Q(0,0;z)-x y}{xyz[\sum_{(i,j)\in\mathcal{S}}x^{i}y^{j}-1/z]}.
     \end{equation*}
\end{enumerate}
\end{thm}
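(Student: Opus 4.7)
The plan is to dispose first of parts (ii) and (iii), which follow directly from the functional equation (\ref{functional_equation}), and then to concentrate on part (i), which carries the genuine analytic content.

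For (iii), solving (\ref{functional_equation}) for $Q(x,y;z)$ is a purely algebraic manipulation producing exactly the stated formula, valid wherever the kernel (\ref{def_kernel}) is nonzero. For (ii), when ${\sf SW}\in\mathcal{S}$ one has $\delta=1$ and $c(0;z)=\widetilde c(0;z)=z$, so substituting any admissible zero $(x_0,y_0,z_0)$ of the kernel into (\ref{functional_equation}) kills the left-hand side and yields, after rearrangement, the claimed closed form. When ${\sf SW}\notin\mathcal{S}$, one has $\delta=0$ and $c(0;z)=0$, so $c(x;z)$ is divisible by $x$, and the asserted limit formula is then an immediate consequence of the power-series definition of $Q(x,0;z)$.

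For part (i), the heart of the theorem, I would view the kernel as a polynomial of degree two in $y$, whose roots $Y_0(x;z),Y_1(x;z)$ are algebraic functions with branch points at the four real zeros $x_1(z)\leq x_2(z)\leq x_3(z)\leq x_4(z)$ of the discriminant (this being where the non-singularity hypothesis intervenes), complex conjugate on $[x_1(z),x_2(z)]$. By symmetry between the roles of $x$ and $y$ it is enough to establish the first identity. Substituting $y=Y_i(x;z)$ into (\ref{functional_equation}) annihilates the left-hand side and gives
\begin{equation*}
c(x;z)Q(x,0;z)+\widetilde c(Y_i(x;z);z)Q(0,Y_i(x;z);z)-z\delta Q(0,0;z)=xY_i(x;z),\quad i=0,1.
\end{equation*}
Subtracting the two produces the Riemann--Hilbert type boundary relation
\begin{equation*}
\widetilde c(Y_0(x;z);z)Q(0,Y_0(x;z);z)-\widetilde c(Y_1(x;z);z)Q(0,Y_1(x;z);z)=x\bigl(Y_0(x;z)-Y_1(x;z)\bigr)
\end{equation*}
on $[x_1(z),x_2(z)]$, where $Y_0,Y_1$ are complex conjugate.

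Inserting this back into the $i=0$ identity expresses the unknown $F(x):=c(x;z)Q(x,0;z)-c(0;z)Q(0,0;z)-xY_0(x;z)$ as a single-valued analytic function of $Y_0(x;z)$ (plus a constant), so that its analytic continuation across $[x_1(z),x_2(z)]$ has a prescribed jump equal to $-t\bigl(Y_0(t;z)-Y_1(t;z)\bigr)$. Reconstructing $F$ from this jump is a classical Cauchy problem, but the appropriate uniformizing variable is not $x$ itself but the \emph{conformal gluing function} $w(x;z)$---the unique map (as developed in \cite{FIM,KRG}) which identifies the two sides of the cut and is invariant under the Galois involution attached to the kernel. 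In the uniformized coordinate the Cauchy kernel becomes $\partial_{t}w(t;z)/(w(t;z)-w(x;z))$, which explains the form of the integrand; the subtraction of the same kernel evaluated at $x=0$ is the standard normalisation enforcing the correct value at the origin, where $F$ itself vanishes. The main obstacle is analytic, not algebraic: one has to construct $w$ with the required regularity and symmetry, to verify that $\widetilde c(y;z)Q(0,y;z)$ extends analytically to a neighbourhood of the closed contour $Y_0([x_1(z),x_2(z)])$, and to check that the Cauchy-type representation so obtained agrees, as a formal power series in $z$, with the combinatorial generating function on the left. These are exactly the points where non-singularity is used and where singular walks would require a genuinely different treatment.
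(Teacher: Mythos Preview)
Your treatment of parts (ii) and (iii) is fine and matches the paper. For part (i) you have the right overall strategy---kernel substitution, boundary value problem, conformal gluing---but the boundary value problem is set up on the wrong locus, and the role of the gluing function $w$ is mischaracterised.

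When you substitute $y=Y_i(x)$ for $x\in[x_1,x_2]$ and subtract, what you obtain is the boundary condition for $\widetilde c(y)Q(0,y)$ on the curve $Y([x_1,x_2])$, merely written in the $x$-parametrisation; it is not a boundary condition for $c(x)Q(x,0)$. The paper's route to the first identity is dual: one parametrises by $y\in[y_1,y_2]$, so that $X_0(y)$ and $X_1(y)=\overline{X_0(y)}$ trace the \emph{quartic curve} $X([y_1,y_2])$, and subtraction yields
\[
c(t)Q(t,0)-c(\bar t)Q(\bar t,0)=tY_0(t)-\bar tY_0(\bar t),\qquad t\in X([y_1,y_2]).
\]
This is a Riemann--Carleman problem on the curve, for a function that is holomorphic \emph{inside} the curve---this is precisely where the analytic continuation result (Theorem~\ref{Thm_continuation}) is needed, since $\mathscr G X([y_1,y_2])$ may leave the unit disc. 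The conformal gluing function $w$ then maps $\mathscr G X([y_1,y_2])$ to a slit plane and satisfies $w(t)=w(\bar t)$ on the curve $X([y_1,y_2])$: it glues that curve, not the segment $[x_1,x_2]$. After composing with $w$ one has a standard Riemann--Hilbert problem on a segment, solved by the Plemelj formula; changing variables back gives an integral over the curve $X([y_1,y_2])$, and only \emph{then} does a residue computation (picking up the pole at $t=x$ and the jump of $tY_0(t)$ across $[x_1,x_2]$) produce the $xY_0(x)$ term and the segment integral over $[x_1,x_2]$ stated in the theorem.

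Your function $F$ does have the jump you claim on $[x_1,x_2]$, but that jump alone does not determine $F$: you would also need the gluing condition $F(t)=F(\bar t)$ on the outer boundary curve $X([y_1,y_2])$, and a clear statement of the domain of holomorphy. Once those are supplied the approach can be made to work and is essentially equivalent to the paper's, but as written the reconstruction step is under-determined.
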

In the statement of Theorem \ref{explicit_integral}:
\begin{itemize}
\item[---] $c$ and $\widetilde{c}$ are defined in \eqref{c_c_tilde}.
\item[---] $Y_{0}$ and $Y_{1}$ (resp.\ $X_{0}$ and $X_{1}$)
           are the $y$- (resp.\ $x$-) roots of the kernel \eqref{def_kernel},
           which is a second-degree polynomial. They are chosen such that $|Y_0|\leq |Y_1|$
           (resp.\ $|X_0|\leq |X_1|$), see
           Lemma \ref{Properties_X_Y_0}. Their expression is given
           in \eqref{def_XX} (resp.\ \eqref{def_YY}).
\item[---] $x_1(z)$ and $x_2(z)$ (resp.\ $y_1(z)$ and $y_2(z)$) are two of the four 
           \emph{branch points}
           of $Y_0$ and $Y_1$ (resp.\ $X_0$ and $X_1$), see Subsection \ref{k} below \eqref{d_d_tilde} for their
           proper definition. These four branch points can be
           characterized as the only points satisfying $Y_0=Y_1$ (resp.\ $X_0=X_1$).
           An equivalent definition is that they are the roots of the discriminant of the kernel \eqref{d_d_tilde}
           viewed as a second-degree polynomial in the variable $y$ (resp.\ $x$), see \eqref{kernel_pt}.
\item[---] $w$ and $\widetilde{w}$ are conformal
           mapping with  additional gluing
           properties.
           While it is easy to show their
           existence, see Subsection \ref{sub}, finding their
           expression is, generally speaking, quite a difficult task. Theorem \ref{ann} 
           of this paper, which
           gives explicit formulations for $w$ and $\widetilde{w}$,
           is one of our main contributions.
           Because of notations, we prefer stating it in Section \ref{Study_CGF} rather than here:
           the expressions of $w$ and $\widetilde{w}$ we obtain indeed strongly involve $\wp$-Weierstrass
           elliptic functions---the latter naturally appear due to the uniformization
           (that we make
           in Section \ref{Uniformization}) of
           the set given by the zeros of the kernel \eqref{def_kernel}, generically isomorphic to a Riemann surface
           of genus 1. In Theorems \ref{nature_CGF} and \ref{big_theo} below,
           we give important and complementary precisions to Theorem \ref{ann}.
\end{itemize}

For the 51 non-singular walks with an infinite group, the 
function $Q(x,y;z)$ is made explicit for the first time---to the
best of our knowledge. After the
work \cite{KRG} on Gessel's walk, this paper also provides integral representations of 
$Q(x,y;z)$ for the 22 other
walks admitting a finite group. As we are going to see now, it turns out
that the finiteness of the group actually acts directly on the nature
(rational, algebraic, holonomic, non-holonomic) of the functions $w$
and $\widetilde{w}$ present in these integral representations.
Another important quantity happens to be (the sign of) the covariance of
$\mathcal{S}$, namely,
     \begin{equation}
     \label{def_covariance}
          \sum_{(i,j)\in\mathcal{S}} ij.
     \end{equation}

\begin{thm}
\label{nature_CGF}
If the group of the walk is finite (resp.\ infinite), then $w$ and $\widetilde{w}$ are
algebraic (resp.\ non-holonomic, and then, of course, non-algebraic). Furthermore,
in the case of a finite group, if in addition the covariance \eqref{def_covariance}
of the walk is negative or zero (resp.\ positive), then $w$ and $\widetilde{w}$ are rational
(resp.\ algebraic non-rational).
\end{thm}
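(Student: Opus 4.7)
My plan is to exploit the explicit formulas for $w$ and $\widetilde{w}$ supplied by Theorem \ref{ann}: after uniformization of the Riemann surface of the kernel by Weierstrass functions, the coordinates $x$ and $y$ become meromorphic on the torus $\mathbb{C}/(\mathbb{Z}\omega_1+\mathbb{Z}\omega_2)$, while $w$ is, up to an affine normalization, the Weierstrass function $\wp(\omega;\omega_1,\omega_3)$, where $\omega_3$ is the translation on $\mathbb{C}$ by which the product $\Psi\circ\Phi$ of the generators of the group of the walk acts on the universal cover. Consequently, by the uniformization carried out in Section \ref{Uniformization}, the group $W$ is finite of order $2n$ if and only if $n\omega_3\in\mathbb{Z}\omega_2$, i.e.\ iff the ratio $\omega_3/\omega_2$ is rational. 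By the $x\leftrightarrow y$ symmetry it suffices to treat $w$.

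\textbf{Finite group $\Rightarrow$ algebraic.} Suppose $\omega_3/\omega_2\in\mathbb{Q}$. Then the lattices $\mathbb{Z}\omega_1+\mathbb{Z}\omega_2$ and $\mathbb{Z}\omega_1+\mathbb{Z}\omega_3$ are commensurable: their intersection $L$ has finite index in both. Both $\wp(\omega;\omega_1,\omega_2)$ and $\wp(\omega;\omega_1,\omega_3)$ are $L$-periodic and hence belong to the function field of the elliptic curve $\mathbb{C}/L$, which is a finite algebraic extension of the function field $\mathbb{C}(\wp(\omega;\omega_1,\omega_2),\wp'(\omega;\omega_1,\omega_2))\supseteq\mathbb{C}(x)$ of the kernel curve. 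Therefore $w$ is algebraic over $\mathbb{C}(x)$.

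\textbf{Infinite group $\Rightarrow$ non-holonomic.} If $\omega_3/\omega_2\notin\mathbb{Q}$, the poles of $\wp(\omega;\omega_1,\omega_3)$, lying on $\mathbb{Z}\omega_1+\mathbb{Z}\omega_3$, project to a dense subset of the quotient torus $\mathbb{C}/(\mathbb{Z}\omega_1+\mathbb{Z}\omega_2)$. Because the uniformizing map $\omega\mapsto x(\omega)$ has finite degree and is nowhere locally constant, this density is inherited by the singular set of $w$ viewed as a multi-valued function of $x$. But every holonomic function of one complex variable satisfies a linear ODE with polynomial coefficients, whose singular points form a locally finite subset of $\mathbb{C}$ (contained in the zero set of the leading coefficient). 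The accumulation of singularities of $w$ therefore precludes holonomicity, and a fortiori algebraicity.

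\textbf{Covariance sign and main obstacle.} Within the finite-group case, the sign of the covariance controls a further degeneracy. When $\sum_{(i,j)\in\mathcal{S}}ij\leq 0$, a careful analysis of the branch-point configuration from Subsection \ref{k}, combined with the explicit $\wp$-expression from Theorem \ref{ann}, shows that $\wp(\omega;\omega_1,\omega_3)$ is in fact invariant under all sheets of the covering $x\colon\mathbb{C}/L\to\mathbb{C}$, so that $w$ collapses to a rational function of $x$; when the covariance is positive no such invariance holds and $w$ is algebraic non-rational over $\mathbb{C}(x)$. The most delicate step is the non-holonomicity argument: one must rigorously transfer the density of poles on the torus into a density of singularities in the $x$-plane, and verify that no conspiratorial cancellation can occur under $\omega\mapsto x(\omega)$---this is where the quantitative pole-structure information furnished by Theorem \ref{ann} becomes essential.
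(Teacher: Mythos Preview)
Your high-level strategy is sound, and the finite/infinite dichotomy is handled correctly though by different means than the paper. For the finite case your commensurable-lattice argument is a clean abstract substitute for the paper's explicit transformation-theory computation (Equation~\eqref{principle_transformation}). For the infinite case the paper does \emph{not} argue directly from dense singularities to non-holonomicity; instead it proves (i) non-algebraicity via the density of $\mathbb{Z}\omega_2+\mathbb{Z}\omega_3$ in $\mathbb{R}$, and then (ii) a separate implication ``holonomic $\Rightarrow$ algebraic'' for this specific function, by showing that any linear ODE $\sum a_k v^{(k)}=0$ can be rewritten as $U(t,v)+V(t,v)v'=0$ with $U,V$ polynomial in $v$ (using $\wp_{1,3}'^{2}=4\wp_{1,3}^{3}-g_{2,1,3}\wp_{1,3}-g_{3,1,3}$), whence algebraicity. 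Your direct route---poles of all branches of $w$ are dense in the $x$-plane, while a holonomic function can be singular only at the finitely many zeros of the leading ODE coefficient---is valid once you make the monodromy explicit: crossing the cut $[x_3,x_4]$ shifts $\omega$ by $\omega_2$, producing the branch $\wp_{1,3}(\omega-c+n\omega_2)$, whose poles fill out $c+\mathbb{Z}\omega_1+\mathbb{Z}\omega_2+\mathbb{Z}\omega_3$. No ``conspiratorial cancellation'' can occur because these are genuine poles of $\wp_{1,3}$. So this part is essentially complete once you say this.

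The covariance dichotomy, however, is a genuine gap. Your sentence ``a careful analysis of the branch-point configuration \ldots\ shows that $\wp(\omega;\omega_1,\omega_3)$ is invariant under all sheets'' is not a proof and misses the actual mechanism. The decisive fact is Proposition~\ref{lemma_omega_2_3_finite}: in the finite-group case, covariance $\leq 0$ forces $\omega_2/\omega_3\in\{2,3,4\}$, an \emph{integer}, whereas covariance $>0$ gives $\omega_2/\omega_3\in\{3/2,4/3\}$. When $\omega_2/\omega_3$ is an integer, the lattice $\mathbb{Z}\omega_1+\mathbb{Z}\omega_2$ is contained in $\mathbb{Z}\omega_1+\mathbb{Z}\omega_3$, so $\wp_{1,3}$ is already $(\omega_1,\omega_2)$-periodic; combined with the evenness of $\wp_{1,3}(\omega-[\omega_1+\omega_2]/2)$ under $\omega\mapsto-\omega$ (which uses that $[\omega_1+\omega_2]/2$ is a half-period of $\wp_{1,3}$ in this case), one gets $w$ rational in $\wp(\omega)$ and hence in $x$. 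When $\omega_2/\omega_3$ is not an integer, $\wp_{1,3}$ is \emph{not} $\omega_2$-periodic, so the monodromy around $[x_3,x_4]$ is non-trivial and $w$ cannot be single-valued, hence not rational; the paper establishes this concretely by exhibiting the degree-two (or, for Gessel's walk, degree-three) minimal polynomial via a double application of \eqref{principle_transformation}. You need to supply this integer/non-integer distinction and the resulting periodicity argument; without it the rational versus algebraic-non-rational split is unproved.
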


Theorem \ref{nature_CGF} is summarized in Figure \ref{Tables}: in particular,
in the finite group case, we compare \emph{a posteriori} the nature
of $w$ and $\widetilde{w}$ with that of $Q(x,y;z)$, known from \cite{BK,BMM,FR}.
If the group is finite, Theorem \ref{big_theo} below goes much further than Theorem \ref{nature_CGF}:
in the rational (resp.\ algebraic) case, it provides
the rational expressions (resp.\ the minimal polynomials) of
$w$ and $\widetilde{w}$.
For its statement we need the notations:
\begin{itemize}
\item[---]$x_3(z)$ and $x_4(z)$ (resp.\ $y_3(z)$ and $y_4(z)$) are the  remaining branch points
           of $Y_0$ and $Y_1$ (resp.\ $X_0$ and $X_1$), see again Subsection \ref{k} below \eqref{d_d_tilde}
           for their exact definition.
\end{itemize}

\newpage

\begin{thm}
\label{big_theo}
In the finite group case, the explicit expressions of $w$ and $\widetilde{w}$ are as follows.
\begin{enumerate}
\item \label{prop_CGF_order_4}
If the walk is associated with a group of order four,
then $w$ and $\widetilde{w}$ are affine combinations of, respectively,
     \begin{equation*}
          \frac{[t-x_{1}(z)][t-x_{4}(z)]}{[t-x_{2}(z)][t-x_{3}(z)]},
          \ \ \ \ \ \ \ \ \ \
          \frac{[t-y_{1}(z)][t-y_{4}(z)]}{[t-y_{2}(z)][t-y_{3}(z)]}.
     \end{equation*}
\item \label{prop_CGF_order_6_-}
For both walks $\{{\sf N},{\sf SE},{\sf W}\}$ and $\{{\sf N},{\sf E},{\sf SE},{\sf S},{\sf W},{\sf NW}\}$,
$w$ and $\widetilde{w}$ are affine combinations of, respectively,
     \begin{equation*}
     \label{CGF_order_6_-}
          \frac{u(t)}{[t-x_{2}(z)] [t-1/x_{2}(z)^{1/2}]^{2}},
          \ \ \ \ \ \ \ \ \ \
          \frac{\widetilde{u}(t)}{[t-y_{2}(z)] [t-1/y_{2}(z)^{1/2}]^{2}},
     \end{equation*}
with $u(t)=t^{2}$ and $\widetilde{u}(t)=t$ (resp.\ $u(t)=\widetilde{u}(t)=t(t+1)$) for
$\{{\sf N},{\sf SW},{\sf E}\}$ (resp.\ $\{{\sf N},{\sf W},{\sf SW},{\sf S},{\sf E},{\sf NE}\}$).
\item \label{prop_CGF_order_6_+}
For each of the three walks $\{{\sf NE},{\sf S},{\sf W}\}$, $\{{\sf N},{\sf E},{\sf SW}\}$
and $\{{\sf N},{\sf NE},{\sf E},{\sf S},{\sf SW},{\sf W}\}$,
there exist $\alpha(z),\beta(z),\delta(z),\gamma(z)$ which are algebraic with respect to $z$---and made explicit in
the proof---such that $w=\widetilde{w}$  is
the only root with a pole at $x_{2}(z)$ of
     \begin{equation*}
     \label{CGF_order_6_+}
          w^{2}+\left[\alpha(z)+ \frac{\beta(z)u(t)}{[t-x_{2}(z)][t-1/x_{2}(z)^{1/2}]^{2}}\right]w
          +\left[\delta(z)+\frac{ \gamma(z)u(t)}{[t-x_{2}(z)][t-1/x_{2}(z)^{1/2}]^{2}}\right],
     \end{equation*}
with $u(t)=t^{2}$ (resp.\ $u(t)=t$, $u(t)=t(t+1)$) for $\{{\sf NE},{\sf S},{\sf W}\}$ (resp.\ $\{{\sf N},{\sf E},{\sf SW}\}$,
$\{{\sf N},{\sf NE},{\sf E},{\sf S},{\sf SW},{\sf W}\}$).
\item \label{prop_CGF_order_8}
For the walk $\{{\sf E},{\sf SE},{\sf W},{\sf NW}\}$,
$w$ and $\widetilde{w}$ are affine
combinations of, respectively,
     \begin{equation*}
     \label{CGF_order_8}
          \frac{t^{2}}{[t-x_{2}(z)][t-1]^{2}[t-x_{3}(z)]},
          \ \ \ \ \ \ \ \ \ \
          \frac{t(t+1)^{2}}{[t-x_{2}(z)]^{2}[t-x_{3}(z)]^{2}}.
     \end{equation*}
\end{enumerate}
\end{thm}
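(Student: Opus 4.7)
The plan is to exploit the fact, which will already be established in the body of the paper leading up to Theorem \ref{ann}, that $w$ and $\widetilde{w}$ are the conformal gluing functions for the domains bounded by the cycles $\mathcal{M}$, $\widetilde{\mathcal{M}}$ that carry the Galois involutions $\xi,\eta$ associated with the kernel. In the finite group case the kernel curve has genus zero, so the Weierstrass $\wp$-uniformization degenerates to a rational one, and $w$, $\widetilde{w}$ become algebraic functions of $t$ with degree dictated by the order of the group. I would treat the four parts of the theorem as four case studies, each time (a) writing down the rational uniformization of the zero set of the kernel, (b) computing the induced Galois action on the uniformizing parameter, (c) writing the invariant function of smallest degree with the correct pole on the physical sheet, and (d) expressing it back in the $t$ variable.

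\textbf{Part \eqref{prop_CGF_order_4}: order-four group.} Here the group acts as $\{\text{id},\xi,\eta,\xi\eta\}$ with $\xi\eta$ of order two; equivalently, the two sheets of $Y_0,Y_1$ are glued by a single hyperelliptic involution whose fixed points are precisely the branch points $x_1,\dots,x_4$. A function that glues the cut $[x_1(z),x_2(z)]$ and is regular outside must take equal values at points exchanged by this involution and must be at most bi-critical: the unique (up to Möbius) such function is the cross-ratio
\[
\frac{[t-x_1(z)][t-x_4(z)]}{[t-x_2(z)][t-x_3(z)]},
\]
and the analogous one in $y$. Verifying that this is the needed gluing function reduces to checking the ramification profile, which follows from $Y_0(x_i)=Y_1(x_i)$.

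\textbf{Parts \eqref{prop_CGF_order_6_-} and \eqref{prop_CGF_order_6_+}: groups of order six.} For these walks the discriminant of the kernel has a double root, so one pair among $x_3(z),x_4(z)$ collides at $\pm 1/x_2(z)^{1/2}$ (this is the place where I would perform the explicit computation on the step sets listed, using $c(x;z)$ and $\widetilde c(y;z)$ from \eqref{c_c_tilde}). An additional order-three element in the group imposes an extra identification, so the desired invariant must have a \emph{double} pole at the collided branch point. The numerator factor $u(t)$ is determined by the symmetries of the specific step set: for the two walks in \eqref{prop_CGF_order_6_-} the covariance is non-positive, the invariant is still rational, and matching pole orders with degree pins down $u(t)=t^2$, $t$ or $t(t+1)$. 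For the three walks in \eqref{prop_CGF_order_6_+} the covariance is positive, and here the Galois-invariance equations over the rational subfield are not solvable: the smallest field over which $w$ is defined is a quadratic extension, and the minimal polynomial takes the quadratic form stated, with $\alpha,\beta,\gamma,\delta$ fixed by imposing (i) the prescribed pole at $x_2(z)$, (ii) the gluing condition at the branch points, and (iii) the normalization $w=\widetilde{w}$ forced by the diagonal symmetry $x\leftrightarrow y$ of these step sets.

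\textbf{Part \eqref{prop_CGF_order_8}: Gessel's walk.} The group has order eight and the kernel curve still has genus zero. I would use the explicit rational uniformization worked out in \cite{KRG}, read off the induced action of the dihedral group of order eight on the parameter, and invariants of degree four produce the claimed expressions
\[
\frac{t^{2}}{[t-x_{2}(z)][t-1]^{2}[t-x_{3}(z)]},
\qquad
\frac{t(t+1)^{2}}{[t-x_{2}(z)]^{2}[t-x_{3}(z)]^{2}}.
\]
The pole at $t=1$ reflects the fact that for Gessel's walk $x_4(z)=1$ becomes fixed by an extra involution, and the square in the numerator of $\widetilde{w}$ comes from the fixed point $y=-1$ of the corresponding automorphism on the $y$-side.

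\textbf{Main obstacle.} I expect the conceptual work to be concentrated in part \eqref{prop_CGF_order_6_+}: producing the explicit quadratic and the four algebraic coefficients $\alpha(z),\beta(z),\gamma(z),\delta(z)$. The issue is that positive covariance obstructs rational gluing, and one has to identify the correct quadratic extension of $\mathbb C(z,t)$, show that $w$ lies in it, and then pin down the coefficients by a finite set of analytic constraints. Once this case is understood, the remaining cases are computational verifications: check the ramification/pole profile of each candidate function and invoke the uniqueness of the conformal gluing function (up to affine transformation) proved in Section~\ref{sub}.
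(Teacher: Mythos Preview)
Your proposal rests on a false premise that makes the entire approach unworkable. You assert that ``in the finite group case the kernel curve has genus zero, so the Weierstrass $\wp$-uniformization degenerates to a rational one.'' This is incorrect: Proposition~\ref{genus_Riemann_surface} shows that for \emph{every} non-singular walk and every $z\in]0,1/k[$ the kernel curve $\mathscr{K}$ has genus \emph{one}, regardless of whether the group is finite or infinite. The finiteness of the group is not a geometric degeneration of $\mathscr{K}$; it is an arithmetic condition on the periods, namely that $\omega_{3}/\omega_{2}$ is rational (Proposition~\ref{general_lemma_omega_2_3_finite}). Consequently there is no ``rational uniformization'' to write down, no ``genus-zero dihedral invariant'' to compute, and the argument you sketch in each part simply does not start.

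Several downstream claims inherit this error. In parts~\ref{prop_CGF_order_6_-}--\ref{prop_CGF_order_6_+} you say the discriminant acquires a double root and that branch points ``collide at $\pm 1/x_{2}(z)^{1/2}$''; in fact the four $x_i(z)$ stay distinct (that is precisely what makes the genus equal to one), and $1/x_{2}(z)^{1/2}$ is not a branch point at all---in the paper it arises as $f^{-1}(\wp(\omega_{1}/2+\omega_{2}/6))$, a value produced by the transformation theory of elliptic functions. You also misidentify part~\ref{prop_CGF_order_8}: the step set $\{{\sf E},{\sf SE},{\sf W},{\sf NW}\}$ is Gouyou-Beauchamps' walk (group of order eight, negative covariance), not Gessel's walk, which is $\{{\sf E},{\sf SW},{\sf W},{\sf NE}\}$ and is handled in \cite{KRG}. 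The actual mechanism in the paper is the following: since $w(t)=\wp_{1,3}(\wp^{-1}(f(t))-[\omega_{1}+\omega_{2}]/2)$ by Theorem~\ref{ann}, and since $\omega_{3}=(q/k)\omega_{2}$ in the finite group case, one uses the classical transformation formula~\eqref{principle_transformation} (once if $\omega_{2}/\omega_{3}\in\mathbb{Z}$, twice via an auxiliary period $\omega_{4}$ if $\omega_{2}/\omega_{3}=3/2$) together with the addition theorem~\eqref{addition_theorem} to express $\wp_{1,3}$ rationally or quadratically in $\wp$, and then substitutes $\wp=f(t)$. That is where the denominators $[t-x_{2}][t-1/x_{2}^{1/2}]^{2}$ and the quadratic in~\ref{prop_CGF_order_6_+} actually come from.
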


We emphasize that it is \emph{not} necessary to know the affine combinations appearing in \ref{prop_CGF_order_4},
\ref{prop_CGF_order_6_-} and \ref{prop_CGF_order_8} above, since in Theorem \ref{explicit_integral},
$w$ and $\widetilde{w}$ only appear through $\partial_t w(t;z)/[w(t;z)-w(x;z)]$ and
$\partial_t \widetilde{w}(t;z)/[\widetilde{w}(t;z)-\widetilde{w}(y;z)]$.

Together with Gessel's walk $\{{\sf E},{\sf SW},{\sf W},{\sf NE}\}$,
for which the functions $w$ and $\widetilde{w}$ have been found in \cite{KRG},
Theorem \ref{big_theo} gives the simplified expression of these functions
in \emph{all} finite group cases, indeed see the tables in \cite{BMM} or Figure
\ref{Tables} in this paper.

Thanks to Theorems  \ref{big_theo} and \ref{ann},
which give formulations for $w$ and $\widetilde{w}$,
Theorem \ref{explicit_integral} settles the problem of
making explicit the numbers of walks $q(i,j;n)$. The
function $Q(x,y;z)$ being indeed found (at least) within the domain
$\{|x|\leq 1, |y|\leq 1, z\in ]0,1/k[\}$,
its coefficients $\sum_{n \geq 0} q(i,j;n)z^n$ can very easily be
obtained from the Cauchy formulas, for any  $z \in
  ]0,1/k[$. Since the radius of convergence of the  series
$\sum_{n \geq 0} q(i,j;n)z^n$ is at least $1/k$, the
  numbers of walks can then  be identified,
  \textit{e.g.}, in terms  of the limits
    of the successive derivatives\footnote{Throughout, for a function 
    $v$ we denote by $[v]^{(n)}$ or by $\partial^n v$ its $n$th derivative.} as $z>0$ goes to $0$:
    \begin{equation*}
         q(i,j;n)=\lim_{\substack{z\to 0\\(z\hspace{0.25mm}>\hspace{0.47mm}0)}}
         \frac{1}{n!}\left[\sum_{n \geq 0} q(i,j;n)z^n\right]^{(n)}.
    \end{equation*}
Other conclusions and perspectives of Theorems \ref{explicit_integral},
\ref{nature_CGF} and \ref{big_theo} are presented
in Section \ref{A_concluding_remark}.
Let us however make two remarks already.

\begin{rem}
{\rm Theorem \ref{explicit_integral} can be extended without difficulty to the case
of the generating function counting the numbers of paths confined to the 
quadrant $\mathbb{Z}_{+}^{2}$,
having length $k$, starting at $(i_{0},j_{0})$ and ending at $(i,j)$, for any initial
state $(i_{0},j_{0})$.}
\end{rem}

\begin{rem}{\rm\emph{A priori}, the expression of $Q(x,0;z)$
given in Theorem \ref{explicit_integral} is valid and holomorphic
for $x$ inside of some curve (see Subsection \ref{sub}) containing
the branch points $x_1(z)$ and $x_2(z)$\footnote{Each term of the sum providing 
$c(x;z)Q(x,0;z)-c(0;z)Q(0,0;z)$ in
Theorem \ref{explicit_integral} is clearly not holomorphic near
$[x_{1}(z),x_{2}(z)]$, but an application of the residue theorem
exactly as in \cite[Section 4]{KR} would give an expression of
the sum as a
function clearly holomorphic near $[x_{1}(z),x_{2}(z)]$.}.
This expression actually admits an analytic continuation on
$\mathbb{C}\setminus [x_3(z),x_4(z)]$: the arguments we gave for
proving \cite[Theorem 6]{KRG} for Gessel's walk can indeed be applied.
A similar remark holds for $Q(0,y;z)$.}
\end{rem}

Let us now turn to the $5$ singular walks. Both functions
$Q(x,0;z)$ and $Q(0,y;z)$ probably also admit integral
representations, see \cite[Part 6.4.1]{FIM}, but here we prefer
to give the following more elementary series representations.
Below, by $f^{\circ p}$ we mean
$f\circ \cdots \circ f$, with $p$ occurrences of $f$.
\begin{thm}
\label{explicit_series}
Suppose that the walk is singular. The following series representation holds:
     \begin{equation*}
     \label{series_representation}
          Q(x,0;z)=\frac{1}{z x^{2}}\sum_{p\geq 0} Y_{0}\circ (X_{0}\circ Y_{0})^{\circ p}(x;z)
          [(X_{0}\circ Y_{0})^{\circ p}(x;z) -(X_{0}\circ Y_{0})^{\circ (p+1)}(x;z)].
     \end{equation*}
The function $Q(0,y;z)$ is obtained from the equality above by exchanging the roles of $X_{0}$ and  $Y_{0}$.
Moreover, $Q(0,0;z)=0$, and the complete function $Q(x,y;z)$ is obtained with \eqref{functional_equation}.
\end{thm}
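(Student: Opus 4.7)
The plan is to apply the \emph{kernel method} and then iterate. For a singular walk one has $SW\notin\mathcal{S}$ and hence $\delta=0$, so the term $z\delta Q(0,0;z)$ in \eqref{functional_equation} disappears. Substituting $(x,Y_0(x;z))$ in the functional equation makes the left-hand side vanish (since $Y_0$ is a $y$-root of the kernel \eqref{def_kernel}), which yields the identity
\begin{equation*}
c(x;z)Q(x,0;z)+\widetilde{c}(Y_0(x;z);z)Q(0,Y_0(x;z);z)=x\,Y_0(x;z),\tag{A}
\end{equation*}
and symmetrically, substituting $(X_0(y;z),y)$ gives
\begin{equation*}
c(X_0(y;z);z)Q(X_0(y;z),0;z)+\widetilde{c}(y;z)Q(0,y;z)=X_0(y;z)\,y.\tag{B}
\end{equation*}
A key structural fact, valid for all five singular walks in Figure \ref{The_five_singular_walks}, is that the only step of the form $(i,-1)$ is $SE=(1,-1)$; hence $c(x;z)=zx^{2}$ (and similarly $\widetilde{c}(y;z)=zy^{2}$). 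This will produce the prefactor $1/(zx^{2})$ in the final formula.

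The plan is then to iterate (A) and (B) against each other. Set $\eta_{p}(x;z)=(X_0\circ Y_0)^{\circ p}(x;z)$. Applying (A) at $x\leftarrow \eta_{p}$ and (B) at $y\leftarrow Y_0(\eta_{p};z)$, and noting that $X_0(Y_0(\eta_{p};z);z)=\eta_{p+1}$, I subtract the two relations. The terms involving $\widetilde{c}\cdot Q(0,\cdot;z)$ cancel exactly, leaving the telescoping identity
\begin{equation*}
c(\eta_{p};z)Q(\eta_{p},0;z)-c(\eta_{p+1};z)Q(\eta_{p+1},0;z)=Y_0(\eta_{p};z)\bigl[\eta_{p}-\eta_{p+1}\bigr].
\end{equation*}
Summing from $p=0$ to $P-1$ yields
\begin{equation*}
c(x;z)Q(x,0;z)-c(\eta_{P};z)Q(\eta_{P},0;z)=\sum_{p=0}^{P-1}Y_0(\eta_{p};z)\bigl[\eta_{p}-\eta_{p+1}\bigr],
\end{equation*}
and the desired formula will follow after dividing by $c(x;z)=zx^{2}$ and passing to the limit $P\to\infty$.

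The main obstacle is therefore to justify that $c(\eta_{P};z)Q(\eta_{P},0;z)\to 0$ as $P\to\infty$. This is a dynamical question about the iteration of $X_{0}\circ Y_{0}$ near the origin. I would work as a formal power series or on a small polydisc in $(x,z)$ and verify, from the explicit expression of $X_{0},Y_{0}$ (see \eqref{def_XX}--\eqref{def_YY}) and the singular step set (no step in the ``south-west quadrant'' of directions), that $X_{0}\circ Y_{0}$ fixes $0$ and that the formal order in $(x,z)$ of $\eta_{p}$ tends to $+\infty$ with $p$; since $Q(x,0;z)$ is a formal power series in $z$ with polynomial-in-$x$ coefficients, this forces $\eta_{P}^{2}Q(\eta_{P},0;z)\to 0$ in the $(x,z)$-adic topology. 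Alternatively, on a suitable domain $\{|x|\leq \rho,\,|z|\leq \rho'\}$ one checks $|\eta_{p}(x;z)|\leq C\lambda^{p}|x|$ with $\lambda<1$ from the leading-order behaviour of $X_{0}\circ Y_{0}$ at the origin, combined with the elementary bound $|Q(\eta_{p},0;z)|\leq Q(1,1;|z|)<\infty$ for small $|z|$.

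Once the tail vanishes, the stated formula for $Q(x,0;z)$ follows, the formula for $Q(0,y;z)$ is obtained by exchanging the roles of $X_{0}$ and $Y_{0}$ (which corresponds to the symmetric iteration of $Y_{0}\circ X_{0}$), and $Q(0,0;z)$ is read off by taking $x\to 0$ in the expression (the $p=0$ term equals $Y_{0}(x;z)[x-X_{0}(Y_{0}(x;z);z)]$, which vanishes at $x=0$, and the same is verified termwise). Finally, the trivariate generating function $Q(x,y;z)$ is recovered by inserting $Q(x,0;z),Q(0,y;z)$ and $\delta=0$ into \eqref{functional_equation}.
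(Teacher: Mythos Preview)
Your proof is correct and follows essentially the same approach as the paper: derive the telescoping identity from the kernel method (the paper's equation \eqref{toit} is exactly your subtraction of (A) and (B)), sum over $p$, and use the contraction of $X_{0}\circ Y_{0}$ to kill the boundary term. The only difference is cosmetic: the paper cites \cite[Part 6.4.2]{FIM} for the inequality $|X_{0}\circ Y_{0}(x;z)|<|x|$ on $\{|x|<1,\,|z|<1/k\}$, whereas you sketch the contraction argument directly.
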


The paper \cite{MM2} gives a proof of Theorem \ref{explicit_series}
for the walks $\{{\sf NW},{\sf NE},{\sf SE}\}$ and $\{{\sf NW},{\sf
N},{\sf SE}\}$, and suggests that this result should also hold for the
other $3$ singular walks. For the sake of completeness, 
in this article we prove Theorem \ref{explicit_series} in
all the 5 cases, see Subsection \ref{Sing}.

%

\section{Approach via boundary value problems}
\label{apap}
\subsection{Structure of the remainder of the paper}
\label{gen_pre}

The approach that we are going to use depends on whether 
the walks under consideration are singular or not.

\medskip

In order to prove Theorem \ref{explicit_integral}---this concerns all the non-singular walks,
\textit{i.e.}, the 23 with a finite group
as well as 51 of the 56 attached to an infinite group,
see the tables in \cite{BMM}---we here
extend to three variables $x,y,z$ the profound analytic
approach elaborated by Fayolle, Iasnogorodski
and Malyshev in \cite{FIM} for the stationary case, which correspond to two
variables $x,y$. This is the aim of Subsection \ref{sub}. To summarize:

\medskip

\textit{Step 1.} \textit{From the fundamental functional equation
\eqref{functional_equation}, we prove that $c(x;z)Q(x,0;z)$ and
$\widetilde{c}(y;z)Q(0,y;z)$ satisfy certain boundary value
problems of Riemann-Carleman type, \textit{i.e.}, with boundary
conditions on curves closed in $\mathbb{C}\cup \{\infty\}$ and admitting non-empty interiors.}
These curves are studied in Lemma \ref{Properties_curves_0}.
The proof of this first step is performed in two stages. Firstly, we have to know the
class of functions within which $c(x;z)Q(x,0;z)$ and
$\widetilde{c}(y;z)Q(0,y;z)$ should be searched. This is the goal of
Theorem \ref{Thm_continuation}, which states that they admit nice
holomorphic continuations in the whole interiors of the
curves above. Its proof is postponed to Section \ref{CONTINUATION}.
Secondly, we have to obtain the precise boundary conditions on these
curves; this is done in Subsection \ref{sub}, see \eqref{BVP}.

\medskip

\textit{Step 2.} \textit{Next we transform these problems into
boundary value problems of Riemann-Hilbert type, \textit{i.e.}, with
conditions on segments}. This conversion is motivated by the fact 
that the latter problems
are more usual and by far more treated in the literature, see \cite{FIM,LIT} and references therein. 
It is done in Subsection \ref{sub}, by using
\textit{conformal gluing functions} in the sense of Definition
\ref{def_CGF}.

\medskip

\textit{Step 3.} \textit{Finally we solve these new problems and we
deduce an explicit integral representation of
\eqref{definition_generating_function}.} This will conclude
Subsection \ref{sub}.

\medskip

 Subsection \ref{sub} gives the complete proof of Theorem
\ref{explicit_integral}. As a consequence regarding the 
functions $c(x;z)Q(x,0;z)-c(0;z)Q(0,0;z)$ and
$\widetilde{c}(y;z)Q(0,y;z)-\widetilde{c}(0;z)Q(0,0;z)$, it remains
to find explicitly suitable conformal gluing functions $w$ and
$\widetilde{w}$. This is the topic of Sections \ref{Uniformization}
and \ref{Study_CGF}. In Section \ref{Study_CGF} we study 
these conformal gluing functions in-depth, as their analysis is just sketched
out in \cite{FIM}. There we state and prove Theorem \ref{ann}, which
  gives their explicit expression for all non-singular walks.
In Section \ref{Study_CGF} we also prove 
Theorem \ref{nature_CGF} on the nature of $w$ and $\widetilde{w}$, as well as Theorem \ref{big_theo}
giving simplified expressions of $w$ and $\widetilde{w}$ in the finite group case.
 The analysis of these functions relies on a uniformization of the
Riemann surface given by the zeros of the kernel,
namely, $\{(x,y)\in \mathbb{C}^{2}:$ $
\sum_{(i,j)\in\mathcal{S}}x^{i}y^{j}-1/z=0\}$. This work is carried out in
the introductory---and crucial---Section \ref{Uniformization}.

\medskip

For the 5 singular walks, the curves associated with the boundary value problems above
degenerate into a point and the previous arguments no longer work. However, starting from \eqref{functional_equation}, it is easy to make
explicit a series representation of the function \eqref{definition_generating_function},
see Subsection \ref{Sing}.

\subsection{The kernel and its roots}
\label{k} This part is introductory to Subsections \ref{sub} and
\ref{Sing}. It aims at examining the kernel \eqref{def_kernel} that
appears in \eqref{functional_equation}, and in particular at
studying its roots. First of all, we notice that the kernel can
alternatively be written as
     \begin{equation}
     \label{kernel_pt}
          xyz\left[\sum_{(i,j)\in\mathcal{S}}x^{i}y^{j}-1/z\right]=\widetilde{a}
          (y;z)x^{2}+\widetilde{b}(y;z)x+\widetilde{c}(y;z)=a(x;z)y^{2}+b(x;z)y+c(x;z),
     \end{equation}
where $\widetilde{c}(y;z)$ and $c(x;z)$ are defined in \eqref{c_c_tilde}, and where
     \begin{align*}
          \widetilde{a}(y;z)&=zy\textstyle \sum_{(+1,j)\in\mathcal{S}}y^{j},\hspace{-15mm}
          &\widetilde{b}(y;z)=-1+&zy\textstyle\sum_{(0,j)\in\mathcal{S}}y^{j}, \\
          a(x;z)&=\textstyle zx\sum_{(i,+1)\in\mathcal{S}}x^{i},\hspace{-15mm}
          &b(x;z)=-1+&zx\textstyle \sum_{(i,0)\in\mathcal{S}}x^{i}.
     \end{align*}
We also define
     \begin{equation}
     \label{d_d_tilde}
          \widetilde{d}(y;z)=\widetilde{b}(y;z)^{2}-4\widetilde{a}(y;z)\widetilde{c}(y;z),
          \ \ \ \ \  d(x;z)=b(x;z)^{2}-4a(x;z)c(x;z).
     \end{equation}
\begin{itemize}
  \item[---] If the walk is non-singular, then for any $z\in]0,1/k[$, the polynomial $\widetilde{d}$ (resp.\ $d$)
        has three or four roots, that we call $y_{k}(z)$ (resp.\ $x_{k}(z)$). As shown in \cite[Part 2.3]{FIM},
        they
        are such that $|y_{1}(z)|<y_{2}(z)<1<y_{3}(z)<|y_{4}(z)|$ (resp.\ $|x_{1}(z)|<x_{2}
        (z)<1<x_{3}(z)<|x_{4}(z)|$), with $y_4(z)=\infty$ (resp.\ $x_4(z)=\infty$)
        if $\widetilde{d}$ (resp.\ ${d}$) has order three.
  \item[---] If the walk is singular, the roots above then become $y_{1}(z)=y_{2}(z)=0<1<y_{3}(z)
        <|y_{4}(z)|$ (resp.\ $x_{1}(z)=x_{2}(z)=0<1<x_{3}(z)<|x_{4}(z)|$), see \cite[Part 6.1]{FIM}.
\end{itemize}
The behavior of the branch points $y_{k}(z)$ and $x_{k}(z)$ is not so simple
for $z\notin ]0,1/k[$, and for this reason \emph{we suppose in the sequel
that $z$ is fixed in $]0,1/k[$}.

Now we notice that the kernel \eqref{def_kernel} vanishes if and only if
$[\widetilde{b}(y;z)+ 2\widetilde{a}(y;z)x]^{2}=\widetilde{d}(y;z)$ or $[b(x;z)+2a(x;z)y
]^{2}=d(x;z)$. Consequently \cite{JS}, the algebraic functions $X(y;z)$ and $Y(x;z)$
defined by $\sum_{(i,j)\in\mathcal{S}}X(y;z)^{i}y^{j}-1/z=0$ and $\sum_{(i,j)
\in\mathcal{S}}x^{i}Y(x;z)^{j}-1/z=0$ have two branches, meromorphic on
$\mathbb{C}\setminus ([y_{1}(z),y_{2}(z)]\cup [y_{3}(z),y_{4}(z)])$ and
$\mathbb{C}\setminus([x_{1}(z),x_{2}(z)]\cup [x_{3}(z),x_{4}(z)])$
(resp.\ $\mathbb{C}\setminus [y_{3}(z),y_{4}(z)]$ and $\mathbb{C}\setminus
[x_{3}(z),x_{4}(z)]$) in the non-degenerate (resp.\ degenerate) case.

We fix the notations of the two branches of the algebraic
functions $X(y;z)$ and $Y(x;z)$ by setting  
     \begin{equation}
     \label{def_XX}
          X_{0}(y;z)=\frac{-\widetilde{b}(y;z)+ \widetilde{d}(y;z)^{1/2}}{2\widetilde{a}(y;z)},
          \ \ \ \ \ X_{1}(y;z)=\frac{-\widetilde{b}(y;z)-
          \widetilde{d}(y;z)^{1/2}}{2\widetilde{a}(y;z)},
     \end{equation}
as well as
     \begin{equation}
     \label{def_YY}
          Y_{0}(x;z)=\frac{-b(x;z)+d(x;z)^{1/2}}{2a(x;z)},
          \ \ \ \ \ Y_{1}(x;z)=\frac{-b(x;z)-d(x;z)^{1/2}}{2a(x;z)}.
     \end{equation}
The following straightforward result holds, see \cite[Part 5.3]{FIM}.
     \begin{lem}
     \label{Properties_X_Y_0}
          For all $y\in\mathbb{C}$, we have $|X_{0}(y;z)|
          \leq |X_{1}(y;z)|$. Likewise, for all $x\in\mathbb{C}$, we have $|Y_{0}(x;z)|
          \leq |Y_{1}(x;z)|$.
     \end{lem}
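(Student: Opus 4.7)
The plan is to localize the moduli of $Y_{0},Y_{1}$ at the convenient test locus $|x|=1$ by a Rouché argument, and then propagate the resulting strict inequality to the entire domain of definition by a continuity/connectedness argument. The statement for $X_{0},X_{1}$ will follow by exchanging the roles of $x$ and $y$.

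I would first fix $z\in(0,1/k)$ and note from Subsection \ref{k} that the two cuts $[x_{1}(z),x_{2}(z)]$ and $[x_{3}(z),x_{4}(z)]$ lie on the real line with $|x_{1}|<x_{2}<1<x_{3}<|x_{4}|$. Thus $[x_{1},x_{2}]$ is strictly inside, and $[x_{3},x_{4}]$ strictly outside, the unit disc, and the complement $\Omega:=\mathbb{C}\setminus([x_{1},x_{2}]\cup[x_{3},x_{4}])$ is a connected open set containing the unit circle. On $\Omega$ the branches $Y_{0}$ and $Y_{1}$ are meromorphic and satisfy $Y_{0}\neq Y_{1}$ (since $d$ vanishes only on the cuts), so the continuous function $|Y_{1}(x;z)|-|Y_{0}(x;z)|$ keeps a constant sign on $\Omega$.

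The core step is a Rouché estimate. Writing $b(x;z)=-1+z\sum_{(i,0)\in\mathcal{S}}x^{i+1}$ and the analogous expressions for $a$ and $c$, one finds on $|x|=1$
\[
|b(x;z)|\geq 1-z\,\#\{(i,0)\in\mathcal{S}\},\qquad |a(x;z)|+|c(x;z)|\leq z\bigl(\#\{(i,+1)\in\mathcal{S}\}+\#\{(i,-1)\in\mathcal{S}\}\bigr),
\]
so the two bounds combine to give $|b(x;z)\,y|-|a(x;z)\,y^{2}+c(x;z)|\geq 1-zk>0$ on the torus $|x|=|y|=1$. Rouché's theorem applied to the polynomial $a(x;z)y^{2}+b(x;z)y+c(x;z)$ in $y$ then shows that exactly one of its roots lies in $\{|y|<1\}$ and the other in $\{|y|>1\}$.

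To conclude, I would fix the branch of $d^{1/2}$ in \eqref{def_YY} so that $Y_{0}$ denotes the root of smaller modulus on $|x|=1$. The constant-sign argument then propagates the strict inequality $|Y_{0}(x;z)|<|Y_{1}(x;z)|$ to all of $\Omega$, while on the cuts $Y_{0}=Y_{1}$ and equality holds trivially. The only delicate point, and the closest thing to an obstacle in the proof, is checking that this single global choice of $d^{1/2}$ is compatible with the ``inside/outside the unit disc'' labeling produced by Rouché; this is precisely the standard convention adopted in \cite[Part 5.3]{FIM} and presents no substantive difficulty.
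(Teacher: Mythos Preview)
Your Rouch\'e step on $|x|=|y|=1$ is clean and correct, and it is the right way to locate the small root on the unit circle. The genuine gap is in the propagation step. From $Y_{0}(x)\neq Y_{1}(x)$ on $\Omega$ you infer that $|Y_{1}(x)|-|Y_{0}(x)|$ never vanishes and hence keeps a constant sign by connectedness; but $Y_{0}\neq Y_{1}$ does \emph{not} rule out $|Y_{0}|=|Y_{1}|$, since two distinct complex numbers can have equal moduli. Concretely, a short computation gives
\[
|Y_{1}(x)|^{2}-|Y_{0}(x)|^{2}=\frac{\mathrm{Re}\!\left(\overline{b(x)}\,d(x)^{1/2}\right)}{|a(x)|^{2}},
\]
and nothing you have written forces the numerator to be nonzero for complex $x$. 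If its zero set met $\Omega$ along a curve separating the unit circle from some other region, any single-valued branch of $d^{1/2}$ would yield $|Y_{0}|<|Y_{1}|$ on one side and $|Y_{0}|>|Y_{1}|$ on the other, and the argument collapses. So the difficulty you flag in your last paragraph is not a ``convention'' but precisely the substance of the lemma.

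The paper itself does not argue this point either; it simply defers to \cite[Part 5.3]{FIM}, where the work your connectivity shortcut bypasses is actually carried out. If you want a self-contained proof along your lines, you must either show directly that $\mathrm{Re}\bigl(\overline{b}\,d^{1/2}\bigr)$ does not vanish on $\Omega$, or run a maximum-modulus argument for the ratio $Y_{0}/Y_{1}$ (carefully handling its poles and its behaviour near the cuts and at infinity).
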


%
%

\subsection{Non-singular walks: proof of Theorem \ref{explicit_integral}}
\label{sub}

In this subsection we show Theorem \ref{explicit_integral},
dealing with all $74$ non-singular walks. According to
Subsection \ref{gen_pre}, we split the proof into three main steps.

\medskip

\textit{Step 1.}
We prove that both functions $c(x;z)Q(x,0;z)$ and
$\widetilde{c}(y;z)Q(0,y;z)$ satisfy a certain boundary
value problem of Riemann-Carleman type, with boundary conditions on
the curves
     \begin{equation*}
          X([y_{1}(z),y_{2}(z)];z),
          \ \ \ \ \ \ \ \ \ \ Y([x_{1}(z),x_{2}(z)];z),
     \end{equation*}
respectively. Examples of the latter are represented in Figure \ref{CurGe}.
In the general case, they satisfy the following properties, see \cite[Part 5.3]{FIM}.
     \begin{lem}
     \label{Properties_curves_0}
        Let $X([y_{1}(z),y_{2}(z)];z)$ and $Y([x_{1}(z),x_{2}(z)];z)$.
     \begin{itemize}
          \item[---] These two curves are symmetrical with respect to the real axis.
          \item[---] They are connected and closed in $\mathbb{C}\cup \{\infty\}$.
          \item[---] They split the plane into two connected components; we note
                    $\mathscr{G}X([y_{1}(z),y_{2}(z)];z)$ and $\mathscr{G}Y([x_{1}(z),x_{2}(z)];z)$
                    the ones of $x_{1}(z)$ and $y_{1}(z)$, respectively. They are such that
                    $\mathscr{G}X([y_{1}(z),y_{2}(z)];z)\subset\mathbb{C}\setminus
                    [x_{3}(z),x_{4}(z)]$ and $\mathscr{G}Y([x_{1}(z),x_{2}(z)];z)\subset
                    \mathbb{C}\setminus [y_{3}(z),y_{4}(z)]$.
     \end{itemize}
     \end{lem}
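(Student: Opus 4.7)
The plan is to adapt the arguments of \cite[Part 5.3]{FIM} to the present setting by analyzing the two-valued algebraic function $X(y;z)$ from \eqref{def_XX}. The curve $Y([x_1(z), x_2(z)]; z)$ is handled symmetrically upon exchanging the roles of $x$ and $y$, so I focus on $X([y_1(z), y_2(z)]; z)$.

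First, I would establish the symmetry and closedness. On the real segment $[y_1(z), y_2(z)]$, the discriminant $\widetilde{d}(y;z)$ is non-positive: the four real branch points satisfy $|y_1(z)| < y_2(z) < 1 < y_3(z) < |y_4(z)|$ by Subsection \ref{k}, and the sign of $\widetilde{d}$ between its two smallest roots is read off by evaluating at a single interior point. Hence $\widetilde{d}(y;z)^{1/2}$ is purely imaginary on $[y_1(z), y_2(z)]$, so the branches $X_0, X_1$ defined in \eqref{def_XX} are complex conjugates there, which yields the symmetry about $\mathbb{R}$. At the endpoints $\widetilde{d}$ vanishes and $X_0 = X_1$, so the two conjugate arcs glue up, producing a continuous, connected, closed curve in $\mathbb{C} \cup \{\infty\}$ (going through $\infty$ if $\widetilde{a}$ happens to vanish in $[y_1(z), y_2(z)]$, which is handled as a point on the Riemann sphere).

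To invoke the Jordan curve theorem and obtain exactly two connected components, I need to verify simplicity. An equality $X_0(y) = X_0(y')$ for distinct $y, y' \in [y_1(z), y_2(z)]$ gives, by complex conjugation, $X_1(y) = X_1(y')$, and Vieta's formulas for the quadratic $\widetilde{a}(y;z) x^2 + \widetilde{b}(y;z) x + \widetilde{c}(y;z)$ then force $\widetilde{b}/\widetilde{a}$ and $\widetilde{c}/\widetilde{a}$ to coincide at $y$ and $y'$. A case-by-case inspection of the 74 non-singular step sets, or equivalently an appeal to the uniformization of Section \ref{Uniformization}, rules this out except at the endpoints where $X_0 = X_1$ already.

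For the inclusion $\mathscr{G}X([y_1(z), y_2(z)]; z) \subset \mathbb{C} \setminus [x_3(z), x_4(z)]$, I would exploit that $X_0, X_1$ are meromorphic on $\mathbb{C} \setminus ([y_1(z), y_2(z)] \cup [y_3(z), y_4(z)])$. The same sign analysis on $[y_3(z), y_4(z)]$ produces a second closed curve $X([y_3(z), y_4(z)]; z)$ disjoint from the first, which contains $x_3(z)$ and $x_4(z)$ as the merged values $X_0(y_j) = X_1(y_j)$ for $j=3,4$, since these satisfy $d(\cdot;z) = 0$ via the resultant identity linking $\widetilde{d}$ and $d$ on the kernel zero set. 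It then remains to place $x_1(z)$ and the real segment $[x_3(z), x_4(z)]$ in distinct components of $\mathbb{C} \setminus X([y_1(z), y_2(z)]; z)$; I would carry this out by continuous deformation in $z$ starting from the small-$z$ regime, where every branch point admits an explicit asymptotic expansion and the geometry of both curves is transparent, and then use that the branch points deform continuously without the curve $X([y_1, y_2]; z)$ ever crossing the segment $[x_3(z), x_4(z)]$ (which is forced by the disjointness of the two image curves). The main obstacle will be precisely this final separation step---matching the branch values of $X$ at $y_1, y_2$ with the branch points $x_1, x_2$ of $Y$ and locating $x_1(z)$ in the bounded component---for which I expect to rely on continuity in $z$ combined with the rigidity afforded by the later uniformization of the Riemann surface attached to the kernel.
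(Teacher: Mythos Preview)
The paper does not give its own proof of this lemma; it simply states the result and refers to \cite[Part 5.3]{FIM}. Your proposal is precisely an adaptation of those arguments to $z\in\,]0,1/k[$, so your approach is the same as the paper's intended one. Your sketch of the symmetry, closedness, and simplicity is in line with how \cite{FIM} argues, and the step you flag as the ``main obstacle'' (locating $x_{1}(z)$ and $[x_{3}(z),x_{4}(z)]$ in different components) is exactly where the uniformization of Section~\ref{Uniformization} pays off: once the branch cuts are identified with vertical segments on the torus via Proposition~\ref{transformation_cycles_uniformization}, the separation is immediate, so you need not rely on a small-$z$ deformation argument.
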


\begin{figure}[t]
\begin{center}
\begin{picture}(0.00,740.00)
\hspace{-117mm}
\includegraphics{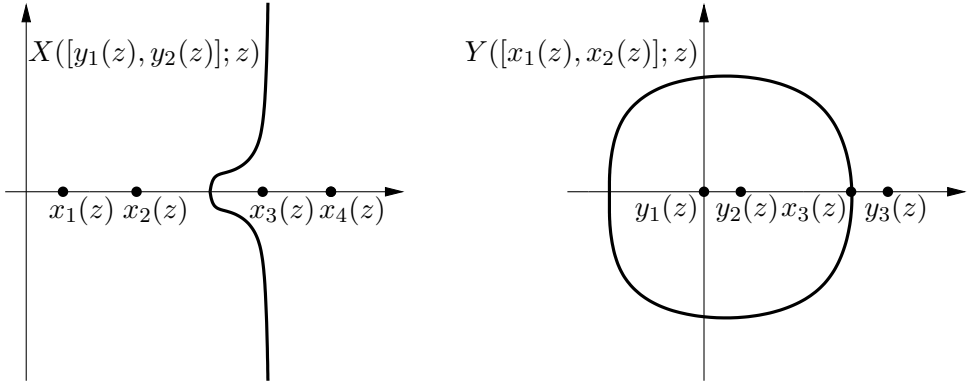}
\end{picture}
\end{center}
\vspace{-212mm}
\caption{The curves $X([y_{1}(z),y_{2}(z)];z)$ and $Y([x_{1}(z),x_{2}(z)];z)$ for Gessel's walk}
\label{CurGe}
\end{figure}

As illustrated by the example of Gessel's walk, see again Figure \ref{CurGe},
these curves are not always included in the unit disc, so that the functions $c(x;z)Q(x,0;z)$ and
$\widetilde{c}(y;z)Q(0,y;z)$ \textit{a priori} need not be defined on them. For this reason,
we first have to continue the generating functions up to these curves: this is exactly
the object of the following result, the proof of which is the subject of Section \ref{CONTINUATION}.
\begin{thm}
\label{Thm_continuation}
     The functions $c(x;z)Q(x,0;z)$ and $\widetilde{c}(y;z)Q(0,y;z)$ can be holomorphically
     continued from the open unit disc $\mathscr{D}$ to $\mathscr{G}X([y_{1}(z),y_{2}(z)]
     ;z)\cup \mathscr{D}$ and $\mathscr{G}Y([x_{1}(z),x_{2}(z)];z)\cup \mathscr{D}$, respectively.
\end{thm}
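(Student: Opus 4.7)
The plan is to prove the continuation of $c(x;z)Q(x,0;z)$; the analogous statement for $\widetilde{c}(y;z)Q(0,y;z)$ follows by exchanging the roles of the two coordinates throughout.

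The starting point is to exploit the vanishing of the kernel. Substituting $y = Y_0(x;z)$ in the fundamental functional equation \eqref{functional_equation} annihilates the left-hand side and yields the identity
\begin{equation*}
c(x;z)Q(x,0;z) \,=\, xY_0(x;z) \,-\, \widetilde{c}(Y_0(x;z);z)\, Q(0, Y_0(x;z); z) \,+\, z\delta\, Q(0,0;z),
\end{equation*}
valid on the nonempty open set $U \subset \mathscr{D}$ where $|Y_0(x;z)| < 1$, since the series defining $Q(0, Y_0(x;z); z)$ then converges absolutely. That $U$ is nonempty follows from $Y_0(0;z) \in \{0,z\}$ (read directly from the quadratic $a(x;z)y^{2} + b(x;z)y + c(x;z) = 0$ at $x=0$, where $a(0;z)=0$ and $b(0;z)=-1$) combined with continuity. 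I would then use the right-hand side of the displayed identity as a candidate definition for the continuation on the enlarged domain.

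The main obstacle is to show that $Y_0$ maps $\mathscr{G}X([y_{1}(z),y_{2}(z)];z)$ into $\overline{\mathscr{D}}$, so that the candidate formula makes sense and defines a holomorphic function. I would split the argument into two parts. On the bounding curve $X([y_{1}(z),y_{2}(z)];z)$, a point has the form $x=X_{j}(y;z)$ for some $y\in[y_{1}(z),y_{2}(z)]$ and some $j\in\{0,1\}$; since $(x,y)$ then lies on the kernel, $y$ is one of $Y_{0}(x;z)$, $Y_{1}(x;z)$, and Lemma \ref{Properties_X_Y_0} gives $|Y_{0}(x;z)| \leq |y| \leq 1$. On the branch cut $[x_{1}(z),x_{2}(z)]$ of $Y_{0}$---which lies inside $\mathscr{G}X([y_{1}(z),y_{2}(z)];z)$ since the latter contains $x_{1}(z)$---the identities $Y_{0}Y_{1}=c(x;z)/a(x;z)$ (Vieta) and $Y_{1}=\overline{Y_{0}}$ (the discriminant is negative and the coefficients real) combine into $|Y_{0}(x;z)|^{2} = c(x;z)/a(x;z)$, which a case analysis based on the polynomial structure of $a$ and $c$ shows is bounded by $1$ on $[x_{1}(z),x_{2}(z)]$. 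Granted these two boundary estimates, the maximum modulus principle applied to $Y_{0}$ on its domain of holomorphy $\mathscr{G}X([y_{1}(z),y_{2}(z)];z)\setminus[x_{1}(z),x_{2}(z)]$---which avoids the other cut $[x_{3}(z),x_{4}(z)]$ by Lemma \ref{Properties_curves_0}---delivers $|Y_{0}(x;z)|\leq 1$ on the whole of $\mathscr{G}X([y_{1}(z),y_{2}(z)];z)$.

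Once this mapping property is established, the right-hand side of the displayed identity is holomorphic in $x$ on $\mathscr{G}X([y_{1}(z),y_{2}(z)];z)\setminus[x_{1}(z),x_{2}(z)]$ and coincides by construction with the original power series on $U$; the identity theorem propagates the agreement to the whole intersection with $\mathscr{D}$. Since the original series is already holomorphic across the real segment $[x_{1}(z),x_{2}(z)]\subset\mathscr{D}$, the two expressions patch into a single holomorphic function on $\mathscr{G}X([y_{1}(z),y_{2}(z)];z)\cup\mathscr{D}$, which is the desired continuation.
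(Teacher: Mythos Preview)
Your overall strategy---substitute $y=Y_0(x;z)$ into \eqref{functional_equation}, use the resulting identity to define the continuation, and control $|Y_0|$ via the maximum modulus principle---is the same as the paper's. The trouble is in your second paragraph, where you assert that $|Y_0(x;z)|^2=c(x;z)/a(x;z)\leq 1$ on the slit $[x_1(z),x_2(z)]$. This is false in general. For Kreweras' walk $\{{\sf W},{\sf NE},{\sf S}\}$ one has $c(x;z)=zx$ and $a(x;z)=zx^2$, so $c/a=1/x$; since $0<x_1(z)<x_2(z)<1$, this ratio exceeds $1$ on the entire slit, and hence $|Y_0|>1$ there. Your maximum modulus argument on $\mathscr{G}X([y_1,y_2];z)\setminus[x_1(z),x_2(z)]$ therefore cannot deliver the bound you need. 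In fact the stronger claim you are trying to establish---that $Y_0$ sends all of $\mathscr{G}X([y_1,y_2];z)$ into $\overline{\mathscr{D}}$---is simply not true for such walks.

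The paper avoids this by proving only what is actually required: that $|Y_0|<1$ on $(\mathscr{G}X([y_1,y_2];z)\cup\mathscr{D})\setminus\mathscr{D}$, the portion of the target domain \emph{not already covered} by the original power series. The maximum modulus principle is applied on that set, whose boundary is contained in the unit circle $\{|x|=1\}$ together with the curve $X([y_1,y_2];z)$. On the curve your own argument gives $|Y_0|<1$; on the unit circle the paper proves separately (Lemma~\ref{Delta_connected}\,\ref{Y011}) that $Y_0(\{|x|=1\})\subset\{|y|<1\}$, via a continuity argument in $z$ anchored at $z=1/k$. The slit $[x_1(z),x_2(z)]$, where $|Y_0|$ may well be large, is harmless precisely because it lies inside $\mathscr{D}$ and is handled directly by the power series.
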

Now, exactly as in \cite[Section 1]{KRG}, we obtain:
     \begin{equation}
     \label{BVP}
          \hspace{-1mm}\left.\begin{array}{ccccc}
          \forall t\in X([\hspace{0.3mm}y_{1}(z),\hspace{0.3mm}y_{2}(z)];z) \hspace{-3.5mm}&, &\hspace{-2mm}
          c(t;z)Q(t,0;z)\hspace{-0.5mm}-\hspace{-0.5mm}
          c(\overline{t};z)Q(\overline{t},0;z)
          \hspace{-2.5mm}&=&\hspace{-2.5mm}
          tY_{0}(t;z)\hspace{-0.5mm}-\hspace{-0.5mm}\overline{t}Y_{0}(
          \overline{t};z),\\
          \forall t\in \hspace{0.3mm}Y([x_{1}(z),x_{2}(z)];z) \hspace{-3.5mm}&, &\hspace{-2mm}
          \widetilde{c}(t;z)Q(0,t;z)\hspace{-0.5mm}-\hspace{-0.5mm}
          \widetilde{c}(\overline{t};z)
          Q(0,\overline{t};z)\hspace{-2.5mm}&=&\hspace{-2.5mm}X_{0}(t;z)t
          \hspace{-0.5mm}-\hspace{-0.5mm}X_{0}
          (\overline{t};z)\overline{t}.
          \end{array}\right.
     \end{equation}
Together with Theorem \ref{Thm_continuation}, we thus conclude that
\emph{$c(x;z)Q(x,0;z)$ and $\widetilde{c}(y;z)Q(0,y;z)$ are found among the
functions holomorphic in  $\mathscr{G}X([y_{1}(z),y_{2}(z)];z)$ and $\mathscr{G}
Y([x_{1}(z),x_{2}(z)];z)$ and satisfying the conditions \eqref{BVP} on the boundary
of these sets}.

\medskip

\textit{Step 2.} A standard \cite{FIM,LIT} way  to solve these boundary value 
problems consists in converting them into problems with boundary
conditions on segments. This transformation is performed by the use of conformal gluing functions, defined below.

\begin{defn}
\label{def_CGF}
     Let $\mathscr{C}\subset\mathbb{C}\cup \{\infty\}$ be an open and simply connected set,
     symmetrical with respect to the real axis, and not equal to $\emptyset$, $\mathbb{C}$
     and $\mathbb{C}\cup \{\infty\}$. A function $w$ is said to be a conformal gluing
     function (CGF) for the set $\mathscr{C}$ if
     \begin{itemize}
     \item[---] $w$ is meromorphic in $\mathscr{C}$;
     \item[---] $w$ establishes a conformal
     mapping of $\mathscr{C}$ onto the complex plane cut along a segment;
     \item[---] For all $t$ in the boundary of $\mathscr{C}$, $w(t)=w(\overline{t})$.
     \end{itemize}
\end{defn}

\begin{rem}{\rm It is worth noting that the existence (but without any explicit expression) of a CGF
for a generic set $\mathscr{C}$ is ensured by general results on conformal gluing
\cite[Chapter 2]{LIT}.}\end{rem}

Let $w(t;z)$ be a CGF for the set $\mathscr{G}X([y_{1}(z),y_{2}(z)];z)$ and let
$U_z$ denote the real segment $w(X([y_{1}(z),y_{2}(z)];z);z)$. We also define
$v(u;z)$ as the reciprocal function of $w(t;z)$. It is meromorphic on
$\mathbb{C}\setminus U_z$, see Definition \ref{def_CGF}. Finally, 
for real values of $u$ we set
$v^{+}(u;z)$ (resp.\ $v^-(u;z)$) for the limit of $v(s;z)$ as $s\to u$ from
the upper (resp.\ lower) half-plane. We notice that for $u\in U_z$,
$v^{+}(u;z)$ and $v^{-}(u;z)$ are different and complex conjugate of one another.

With the notations above, the boundary value problem of the first step
becomes that of
\textit{finding a function $c(v(u;z);z)Q(v(u;z),0;z)$ holomorphic
in $\mathbb{C}\setminus U_z$, bounded near the ends of $U_z$, and
such that, for $u\in U_z$,}
     \begin{multline*}
          c(v^+(u;z);z)Q(v^+(u;z),0;z)-c(v^-(u;z);z)Q(v^-(u;z),0;z)=\\
          v^+(u;z)Y_0(v^+(u;z);z)-v^-(u;z)Y_0(v^-(u;z);z).
     \end{multline*}

     \medskip

\textit{Step 3.}
This problem is standard \cite{FIM,LIT} and can be immediately resolved.
It yields, up to an additive function of $z$,
     \begin{equation*}
               c(v(u;z);z)Q(v(u;z),0;z)=
               \frac{1}{2\pi \imath}\int_{U_z}
               \frac{v^+(s;z)Y_0(v^+(s;z);z)-v^-(s;z)Y_0(v^-(s;z);z)}{s-u}\text{d}s.
     \end{equation*}
The change of variable $s=w(t;z)$ then gives, up to an additive function of $z$,
          \begin{equation}
          \label{btc}
               c(x;z)Q(x,0;z)=
               \frac{1}{2\pi \imath}\int_{X([y_{1}(z),y_{2}(z)];z)}
               t Y_0(t)\frac{\partial_t w(t;z)}{w(t;z)-w(x;z)}\text{d}t.
          \end{equation}
We note that for the difference $c(x;z)Q(x,0;z)-c(0;z)Q(0,0;z)$
considered in Theorem \ref{explicit_integral}, it is \emph{not} worth
knowing the additive function of $z$ appearing in \eqref{btc}. The
residue theorem applied exactly
 as in \cite[Section 4]{KRG}
then transforms the integral on a curve \eqref{btc} into the
integral on a segment written in Theorem \ref{explicit_integral},
 the proof of the formula for $c(x;z)Q(x,0;z)-c(0;z)Q(0,0;z)$ is completed.

The expression of $\widetilde{c}(y;z)Q(0,y;z)-\widetilde{c}(0;z)Q(0,0;z)$ is derived similarly. 
Suppose now that ${\sf SW}\notin \mathcal{S}$, or equivalently that $\widetilde{c}(0;z)=0$. The last formula
then yields an expression of $\widetilde{c}(y;z)Q(0,y;z)$, whence of $Q(0,y;z)$ by division, and finally
of $Q(0,0;z)$ by substitution. If now ${\sf SW}\in \mathcal{S}$, evaluating the functional equation \eqref{functional_equation}
at any $(x_0,y_0,z_0)\in\{|x|\leq 1, |y|\leq 1, |z|<1/k\}$ at which the
kernel \eqref{def_kernel} vanishes immediately provides the expression of $Q(0,0;z)$ stated in Theorem
\ref{explicit_integral}. As for $Q(x,y;z)$, it is then sufficient to use the functional
equation.

\subsection{Singular walks: proof of Theorem \ref{explicit_series}}
\label{Sing}

Theorem \ref{explicit_series} is shown in \cite{MM2} for both step
sets $\mathcal{S}= \{{\sf NW},{\sf NE},{\sf SE}\}$ and $\{{\sf
NW},{\sf N},{\sf SE}\}$. In this subsection we explain how to obtain
it for all $5$ singular walks. Along the same lines as in
\cite[Part 6.4]{FIM}, we obtain from \eqref{functional_equation} the
identity
     \begin{equation}
     \label{toit}
          c(X_0\circ Y_0(t;z);z)Q(X_0\circ Y_0(t;z),0;z)-c(t;z)Q(t,0;z)=X_0\circ Y_0(t;z)Y_0(t;z)
          -tY_0(t;z).
     \end{equation}
Applying \eqref{toit} for $t=(X_{0}\circ Y_{0})^{\circ p}(x;z)$ and
summing with respect to $p\in\{0,\ldots ,q\}$ formally gives, with
the same notations as in the statement of Theorem
\ref{explicit_series},
     \begin{multline*}
          c((X_{0}\circ Y_{0})^{\circ (q+1)}(x;z);z)Q((X_{0}\circ Y_{0})^{\circ (q+1)}(x;z),0;z)-c(x;z)Q(x,0;z)=\\
          \sum_{p= 0}^q Y_{0}\circ (X_{0}\circ Y_{0})^{\circ p}(x;z)
          [(X_{0}\circ Y_{0})^{\circ p}(x;z) -(X_{0}\circ Y_{0})^{\circ (p+1)}(x;z)].
     \end{multline*}
Using \cite[Part 6.4.2]{FIM}, \textit{i.e.}, that (at least)
for $|x|<1$ and $|z|<1/k$, $|X_{0}\circ Y_{0}(x;z)|<x$, gives that
the series above is convergent as $q\to\infty$. This also implies 
that the left-hand side member goes to $c(0;z)Q(0,0;z)-c(x;z)Q(x,0;z)=-zx^{2}Q(x,0;z)$ 
as $q\to\infty$, since for all $5$ singular walks $c(x;z)=zx^2$, see
\eqref{c_c_tilde}. Theorem \ref{explicit_series} follows.

\section{Holomorphic continuation of the generating functions}
\label{CONTINUATION}

\medskip

This part aims at proving Theorem \ref{Thm_continuation}. In other words, we must show that
the generating functions $c(x)Q(x,0)$\footnote{For the sake of conciseness we will, from now on, drop the dependence of
the different quantities on $z\in]0,1/k[$. Moreover, it is implied that any statement in the sequel
begins with ``for any $z\in]0,1/k[$''.} and $\widetilde{c}(y)Q(0,y)$, already
known to be holomorphic in their open unit disc $\mathscr{D}$, can be holomorphically continued
up to $\mathscr{G} X([y_{1},y_{2}])\cup \mathscr{D}$ and $\mathscr{G}Y([x_{1},x_{2}])\cup \mathscr{D}$,
respectively.
First of all, note that the location of the sets $\mathscr{G}X([y_{1},y_{2}])$ and $\mathscr{G}
Y([x_{1},x_{2}])$ depends strongly on the step set $\mathcal{S}$. In particular, it may happen
that they are included in the unit disc---\textit{e.g.}, it is the case for the walks
$\{{\sf N},{\sf SE},{\sf W}\}$ and $\{{\sf N},{\sf E},{\sf SE},{\sf S},{\sf W},{\sf NW}\}$,
as it will be further illustrated in Figure \ref{Ex_Pa}---and in that case Theorem
\ref{Thm_continuation} is obvious. On the other hand, there actually exist walks for which these sets
do not lie inside the unit disc---this is true for Gessel's walk, see Figure \ref{CurGe}.
The proof of Theorem \ref{Thm_continuation} requires us the following results.

\begin{lem}
Assume that the walk is non-singular. The following properties hold:
\label{Delta_connected}
\begin{enumerate}
\item \label{Y011}
$Y_{0}(\{|x|=1\})\subset \{|y|<1\}$;
\item \label{lemma_non_singular_reciprocal}
$X_{0}:\mathscr{G}Y([x_{1},x_{2}])\setminus [y_{1},y_{2}]\to
\mathscr{G}X([y_{1},y_{2}])\setminus [x_{1},x_{2}]$ and $Y_{0}: \mathscr{G}X([
y_{1},y_{2}])\setminus [x_{1},x_{2}]\to\mathscr{G}Y([x_{1},x_{2}])\setminus [y_{1},
y_{2}]$ are conformal and reciprocal of one another;
\item \label{fifi} $\{x\in\mathbb{C}: |Y_{0}(x)|< 1\}\cap \mathscr{D}$ is non-empty;
\item \label{se} $\mathscr{G}X([y_{1},y_{2}])\cup \mathscr{D}$ is connected;
\item \label{th} $\mathscr{G}X([y_{1},y_{2}])\cup\mathscr{D}$ is included in
$\{x\in\mathbb{C}: |Y_{0}(x)|<1\}\cup \mathscr{D}$.
\end{enumerate}
\end{lem}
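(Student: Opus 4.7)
The plan is to handle the five assertions in order, with the analytic content concentrated in \ref{Y011} and \ref{lemma_non_singular_reciprocal}; the remaining parts will then follow from them by straightforward topological considerations and an application of the maximum modulus principle.

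For \ref{Y011}, I would exploit that the kernel does not vanish on the distinguished torus. Since $|\sum_{(i,j)\in\mathcal{S}}x^{i}y^{j}|\leq k<1/z$ whenever $|x|=|y|=1$, no $y$-root $Y_{j}(x)$ of the kernel lies on the unit circle as $x$ traverses $|x|=1$. The ordering $|x_{1}|<x_{2}<1<x_{3}<|x_{4}|$ of Subsection \ref{k} guarantees that no branch point of $Y_{0}$ lies on this circle, so $Y_{0}$ is continuous there and consequently $|Y_{0}|$ is either uniformly $<1$ or uniformly $>1$ on $|x|=1$. Evaluating the kernel coefficients at $(x,y)=(1,1)$ gives $a(1)+b(1)+c(1)=zk-1<0$, so with $a(1)>0$ the value $y=1$ lies strictly between the real roots $Y_{0}(1)<1<Y_{1}(1)$. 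Hence $|Y_{0}|<1$ throughout $|x|=1$.

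For \ref{lemma_non_singular_reciprocal}, the starting point is the algebraic identity $K(x,Y_{0}(x))\equiv 0$, which forces either $X_{0}\circ Y_{0}=\mathrm{id}$ or $X_{1}\circ Y_{0}=\mathrm{id}$ on each connected component of the domain of $Y_{0}$. To isolate the correct branch on $\mathscr{G}X([y_{1},y_{2}])\setminus[x_{1},x_{2}]$, I would use the boundary description: as $y$ tends to the cut $[y_{1},y_{2}]$ from inside $\mathscr{G}Y([x_{1},x_{2}])\setminus[y_{1},y_{2}]$, the images $X_{0}(y)$ trace the curve $X([y_{1},y_{2}])$, which bounds $\mathscr{G}X([y_{1},y_{2}])$. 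Upper-half and lower-half images of the cut correspond to the complex conjugate branches $X_{0},X_{1}$, so by tracking the real endpoints $X_{0}(y_{k})=X_{1}(y_{k})$ (where both branches merge on the closed curve) and arguing by connectedness, one concludes that $X_{0}$ maps $\mathscr{G}Y([x_{1},x_{2}])\setminus[y_{1},y_{2}]$ bijectively onto $\mathscr{G}X([y_{1},y_{2}])\setminus[x_{1},x_{2}]$ with inverse $Y_{0}$. Conformality follows from holomorphy and bijectivity. This is the main technical obstacle, and the argument essentially reproduces \cite[Part 5.3]{FIM}.

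Assertions \ref{fifi} and \ref{se} are then immediate. For \ref{fifi}, take $x=1-\varepsilon$ with $\varepsilon>0$ small; continuity of $Y_{0}$ at $x=1$ together with $Y_{0}(1)<1$ (from the proof of \ref{Y011}) gives $|Y_{0}(1-\varepsilon)|<1$ while clearly $|1-\varepsilon|<1$. For \ref{se}, Lemma \ref{Properties_curves_0} places $x_{1}$ in $\mathscr{G}X([y_{1},y_{2}])$ and $|x_{1}|<x_{2}<1$ places it in $\mathscr{D}$; both sets being open and connected, their union is connected.

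For \ref{th}, I would apply the maximum modulus principle to $Y_{0}$ on the bounded open set $A:=\mathscr{G}X([y_{1},y_{2}])\setminus\overline{\mathscr{D}}$. Boundedness of $\mathscr{G}X([y_{1},y_{2}])$ holds because the other component of $\mathbb{C}\setminus X([y_{1},y_{2}])$ contains $[x_{3},x_{4}]$ by Lemma \ref{Properties_curves_0}, and is the unbounded component (it contains the point at infinity, whether $x_{4}=\infty$ in the degenerate case or $x_{4}$ is merely the branch point of largest modulus). The function $Y_{0}$ is holomorphic on $A$ because its only cut lying in $\mathscr{G}X([y_{1},y_{2}])$ is $[x_{1},x_{2}]\subset\overline{\mathscr{D}}$ and the only candidate pole $x=0$ also belongs to $\mathscr{D}$. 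On the topological boundary $\partial A\subset X([y_{1},y_{2}])\cup\{|x|=1\}$, two regimes occur: on arcs of $X([y_{1},y_{2}])$, the reciprocity of \ref{lemma_non_singular_reciprocal} yields $Y_{0}(x)\in[y_{1},y_{2}]\subset(-1,1)$, so $|Y_{0}|<1$; on arcs of $\{|x|=1\}$, the bound $|Y_{0}|<1$ comes directly from \ref{Y011}. Maximum modulus then delivers $|Y_{0}|<1$ throughout $A$, which is exactly the inclusion stated in \ref{th}.
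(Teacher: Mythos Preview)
Your proof is correct and follows essentially the same architecture as the paper's: \ref{lemma_non_singular_reciprocal} is deferred to \cite[Part 5.3]{FIM}, \ref{fifi} and \ref{se} are deduced by elementary topology, and \ref{th} is obtained by applying the maximum modulus principle to $Y_{0}$ on the part of $\mathscr{G}X([y_{1},y_{2}])$ lying outside the unit disc, using \ref{Y011} on the circular boundary arcs and \ref{lemma_non_singular_reciprocal} (which gives $Y_{0}(X([y_{1},y_{2}]))=[y_{1},y_{2}]\subset\mathscr{D}$) on the remaining arcs.

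The one genuine difference is in \ref{Y011}. The paper first treats $z=1/k$ via \cite[Lemma 2.3.4]{FIM} and then transports the conclusion to $z\in]0,1/k[$ by a continuity-in-$z$ argument. You instead decide the alternative directly by evaluating at $x=1$: from $a(1)+b(1)+c(1)=zk-1<0$ and $a(1)>0$ you get $Y_{0}(1)<1<Y_{1}(1)$. This is more self-contained; to make it airtight you should also note that $Y_{0}(1)\geq 0$ (since $Y_{0}(1)Y_{1}(1)=c(1)/a(1)\geq 0$ and $Y_{1}(1)>0$), so that indeed $|Y_{0}(1)|<1$ and the connectedness argument places all of $Y_{0}(\{|x|=1\})$ in the open unit disc.
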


\begin{proof}
Let us recall that $Y_{0}$ is one of the two
$y$-roots of the kernel \eqref{def_kernel},
and that with $Y_{1}$
denoting the other one, we have $|Y_{0}|\leq |Y_{1}|$, see Lemma \ref{Properties_X_Y_0}.
We are going to prove \ref{Y011} first for $z=1/k$, and we shall deduce from this
the remaining cases $z\in]0,1/k[$.
\begin{itemize}
\item[---] Assume that $z=1/k$. If the kernel \eqref{def_kernel} vanishes at $(x,y$) then
$\sum_{(i,j)\in \mathcal{S}}(1/k)x^{i}y^{j}=1$. Since $\sum_{(i,j)\in \mathcal{S}}(1/k)=1$,
we can apply \cite[Lemma 2.3.4]{FIM}, and in this way we  obtain that $Y_{0}
(\{|x|=1\})\subset\{|y|\leq 1\}$.
\item[---] Suppose that $z\in]0,1/k[$. In that case, the kernel cannot vanish at $(x,y)$
with $|x|=|y|=1$: indeed, for $|x|=|y|=1$ we clearly have that $|\sum_{(i,j)
\in\mathcal{S}}x^{i}y^{j}|\leq \sum_{(i,j)\in \mathcal{S}}1=k <1/z$. As a consequence,
$Y_{0}(\{|x|=1\})\cap\{|y|= 1\}$ is empty. By connectedness, this implies that either
$Y_{0}(\{|x|=1\})\subset \{|y|<1\}$ or $Y_{0}(\{|x|=1\})\subset \{|y|>1\}$.
But once again with \cite[Lemma 2.3.4]{FIM}, for $z=1/k$ we obtain $Y_{0}(\{|x|=1\})\cap\{|y|< 1\}
\neq \emptyset$, so that by continuity
we get that $Y_{0}(\{|x|=1\})\subset \{|y|<1\}$ for all $z\in]0,1/k[$.
\end{itemize}
Item \ref{lemma_non_singular_reciprocal} is proved in \cite[Part 5.3]{FIM} for $z=1/k$; the proof for other values
of $z$ is similar and we therefore choose to omit it.
Note now that \ref{fifi} is a straightforward consequence of \ref{Y011}.
Item \ref{se} is also clear: both sets $\mathscr{G}X([y_{1},y_{2}])$ and $\mathscr{D}$
are connected and the intersection $\mathscr{G}X([y_{1},y_{2}])\cap \mathscr{D}$
is non-empty, since $x_{1}$ belongs to both
sets.
For \ref{th}, it is enough to prove that $(\mathscr{G}X([y_{1},y_{2}])
\cup \mathscr{D})\setminus \mathscr{D}$ is included in $\{x\in\mathbb{C}: |Y_{0}(x)|< 1\}$.
This will follow from an application of the maximum modulus principle (see, \textit{e.g.}, \cite{JS})
to the function $Y_{0}$ on $(\mathscr{G}X([y_{1},y_{2}])\cup \mathscr{D})\setminus \mathscr{D}$.
First of all let us note that $Y_{0}$ is analytic on the latter domain, since thanks to
Subsections \ref{k} and \ref{sub} it is included in $\mathbb{C}\setminus ([x_{1},x_{2}]\cup[x_{3},x_{4}])$.
Next, we prove that $|Y_{0}|< 1$ on the boundary of the set $(\mathscr{G}X([y_{1},y_{2}])\cup
\mathscr{D})\setminus \mathscr{D}$, and for this it is sufficient to show that $|Y_{0}|< 1$ on
$\{|x|= 1\}\cup X([y_{1},y_{2}])$.
But by \ref{Y011} it is immediate that $|Y_{0}|< 1$ on $\{|x|=1\}$, and by
\ref{lemma_non_singular_reciprocal} we get that $Y_{0}(X([y_{1},y_{2}]))=[y_{1},y_{2}]$, a
segment which is known to belong to the unit disc, thanks to Subsection \ref{k}.
In this way, the maximum modulus principle directly entails that $|Y_{0}|< 1$ on
the domain $(\mathscr{G}X([y_{1},y_{2}])\cup \mathscr{D})\setminus \mathscr{D}$.
\end{proof}

\begin{proof}[Proof of Theorem \ref{Thm_continuation}.]
We are going to explain here the continuation procedure only for $c(x)Q(x,0)$,
since the case of $\widetilde{c}(y)Q(0,y)$ is similar.

Evaluating \eqref{functional_equation} at any $(x,y)\in\{|x|\leq 1,|y|\leq 1\}$
such that $\sum_{(i,j)\in\mathcal{S}}x^{i}y^{j}-1/z=0$ leads to
$c(x)Q(x,0)+\widetilde{c}(y)Q(0,y)-z\delta Q(0,0)-xy=0$. Therefore, if $x\in \{x \in
\mathbb{C}: |Y_{0}(x)|< 1\}\cap \mathscr{D}$, we obtain
     \begin{equation}
     \label{before_cont}
          c(x)Q(x,0)+\widetilde{c}(Y_{0}(x))Q(0,Y_{0}(x))-z\delta Q(0,0)-xY_{0}(x)=0.
     \end{equation}
Since $\{x \in \mathbb{C}: |Y_{0}(x)|< 1\}\cap \mathscr{D}$ is non-empty,
see Lemma \ref{Delta_connected} \ref{fifi}, both functions $c(x)Q(x,0)$ and
$\widetilde{c}(Y_{0}(x))Q(0,Y_{0}(x))$ as well as the identity \eqref{before_cont}
can be extended up to the connected component of $\{x\in\mathbb{C}: |Y_{0}(x)|
< 1\}\cup \mathscr{D}$ containing $\{x \in \mathbb{C}: |Y_{0}(x)|< 1\}\cap
\mathscr{D}$, by analytic continuation.

As a consequence, we now need to show that $\mathscr{G}X([y_{1},y_{2}])\cup \mathscr{D}$ is connected
and included in $\{x\in\mathbb{C}: |Y_{0}(x)|< 1\}\cup \mathscr{D}$,
since $\mathscr{G}
X([y_{1},y_{2}])\cup \mathscr{D}$ has clearly a non-empty intersection
with $\{x \in \mathbb{C}: |Y_{0}(x)|< 1\}\cap \mathscr{D}$. These facts
are exactly the objects of Lemma \ref{Delta_connected} \ref{se} and \ref{th}.

It thus remains for us only to prove that this continuation of $c(x)Q(x,0)$ is
holomorphic on $\mathscr{G}X([y_{1},y_{2}])\cup \mathscr{D}$.
\begin{itemize}
\item[---] On $\mathscr{D}$ this is immediate, $c(x)Q(x,0)$ being therein defined by its power series.
\item[---] On $(\mathscr{G}X([y_{1},y_{2}])\cup \mathscr{D})\setminus \mathscr{D}$, it follows from \eqref{before_cont}
that the function $c(x)Q(x,0)$ may possibly have the same singularities as $Y_{0}$---namely, the branch
cuts $[x_{1},x_{2}],[x_{3},x_{4}]$---and is holomorphic elsewhere. But these segments do not belong to
$(\mathscr{G}X([y_{1},y_{2}])\cup \mathscr{D})\setminus \mathscr{D}$: with Subsection \ref{k}
we have that $[x_{1},x_{2}]$ is included in $\mathscr{D}$, and by Lemma \ref{Properties_curves_0} we obtain that
$[x_{3},x_{4}]$ is exterior to $\mathscr{G}X([y_{1},y_{2}])$.
\end{itemize}
The continuation of $c(x)Q(x,0)$ is thus holomorphic on $\mathscr{G}X([y_{1},y_{2}])
\cup \mathscr{D}$ and Theorem \ref{Thm_continuation} is proved.
\end{proof}

Let us note that in \cite{KRG},
 we also introduced a procedure of continuation
of $c(x)Q(x,0)$ and $\widetilde{c}(y)Q(0,y)$. We have chosen to present here another way:
it is weaker since Theorem \ref{Thm_continuation}
yields a continuation of the generating functions up to $\mathscr{G}X([y_{1},y_{2}])\cup \mathscr{D}$
and $\mathscr{G}Y([x_{1},x_{2}])\cup \mathscr{D}$ and not up to $\mathbb{C}\setminus [x_{3},x_{4}]$
and $\mathbb{C}\setminus [y_{3},y_{4}]$ as in \cite{KRG};  it is, however, more elementary, because this continuation
is performed directly on the complex plane, rather than on  a uniformization of
the set given by the zeros of the kernel \eqref{def_kernel} as in \cite{KRG}.



\section{Uniformization}
\label{Uniformization}

This part serves to introduce notions that are crucial to Section \ref{Study_CGF}.
It amounts to studying closely the set of zeros of the kernel \eqref{def_kernel},
namely,
     \begin{equation*}
          \mathscr{K}=\{(x,y)\in \mathbb{C}^{2}: \textstyle\sum_{(i,j)\in\mathcal{S}}x^{i}y^{j}-1/z=0\}.
     \end{equation*}
     \begin{prop}
     \label{genus_Riemann_surface}
          For any non-singular walk, $\mathscr{K}$ is a Riemann surface of genus one.
     \end{prop}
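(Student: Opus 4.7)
The plan is to exhibit $\mathscr{K}$, after suitable compactification, as a smooth double cover of the Riemann sphere and to compute the genus via Riemann--Hurwitz. The natural compactification is the closure $\overline{\mathscr{K}}$ of $\mathscr{K}$ in $\mathbb{P}^{1}\times\mathbb{P}^{1}$: after multiplying the defining equation $\sum_{(i,j)\in\mathcal{S}}x^{i}y^{j}-1/z=0$ by $xy$, we obtain a polynomial which is of degree at most $2$ in $x$ and at most $2$ in $y$, so its vanishing defines an algebraic curve of bidegree $(2,2)$ on the quadric surface $\mathbb{P}^{1}\times\mathbb{P}^{1}$.

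Next I would use the projection $\pi(x,y)=x$ from $\overline{\mathscr{K}}$ to $\mathbb{P}^{1}_{x}$. Since the polynomial $a(x;z)y^{2}+b(x;z)y+c(x;z)$ (see \eqref{kernel_pt}) is of degree two in $y$, this projection is a (ramified) double cover of the Riemann sphere. Its ramification points are precisely the zeros of the discriminant $d(x;z)=b(x;z)^{2}-4a(x;z)c(x;z)$ together with, possibly, the point $x=\infty$. By the analysis recalled in Subsection \ref{k}, for a non-singular walk and $z\in\,]0,1/k[$, these branch points are exactly the four distinct values $x_{1}(z),x_{2}(z),x_{3}(z),x_{4}(z)$ (with $x_{4}(z)=\infty$ if $d$ has degree three), and they are pairwise distinct: this is the content of the non-singularity assumption, and it is precisely what fails in the $5$ singular cases where $x_{1}(z)=x_{2}(z)=0$.

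Granting that $\overline{\mathscr{K}}$ is smooth, the Riemann--Hurwitz formula applied to $\pi:\overline{\mathscr{K}}\to\mathbb{P}^{1}$, a two-sheeted covering ramified above exactly four points, yields
\begin{equation*}
     2g(\overline{\mathscr{K}})-2=2\cdot(2\cdot 0-2)+\sum_{P}(e_{P}-1)=-4+4=0,
\end{equation*}
so that $g(\overline{\mathscr{K}})=1$, which is the desired conclusion.

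The main work, and main obstacle, is therefore to check the smoothness of $\overline{\mathscr{K}}$ for a non-singular walk. Away from the points at infinity, a singular point $(x_{0},y_{0})$ of $\mathscr{K}$ would force both partial derivatives of the kernel to vanish; this forces $y_{0}$ to be a double root of $a(x_{0};z)y^{2}+b(x_{0};z)y+c(x_{0};z)$, hence $d(x_{0};z)=0$ and $x_{0}\in\{x_{1}(z),x_{2}(z),x_{3}(z),x_{4}(z)\}$, and a short check using the explicit form of $a,b,c$ shows that distinctness of the four $x_{k}(z)$ excludes simultaneous vanishing of both partials. The points of $\overline{\mathscr{K}}$ lying over $x=0$, $x=\infty$, $y=0$, $y=\infty$ must be treated separately: one works in the affine charts of $\mathbb{P}^{1}\times\mathbb{P}^{1}$ at these coordinate lines and verifies, case by case according to which steps are present in $\mathcal{S}$, that the local defining equation is smooth (and, again, that the only extra branch point over $\infty$ is the one already counted when $d$ has degree three). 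Once smoothness is established, Riemann--Hurwitz closes the proof.
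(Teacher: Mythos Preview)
Your approach is correct in outline and rests on the same underlying reason as the paper's proof (a degree-two cover of $\mathbb{P}^{1}$ ramified at exactly four points), but the execution is different and more laborious. The paper's argument is two lines: completing the square in $y$ rewrites the kernel equation as $[b(x)+2a(x)y]^{2}=d(x)$, so $\mathscr{K}$ is birational to the curve $u^{2}=d(x)$; since for a non-singular walk $d$ is a polynomial of degree three or four with \emph{distinct} roots, one simply cites the classical fact (e.g.\ in \cite{JS}) that the Riemann surface of the square root of such a polynomial has genus one. This bypasses any smoothness check on a particular projective model by passing directly to the standard hyperelliptic form, whose smooth compactification is well known.

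Your route instead compactifies in $\mathbb{P}^{1}\times\mathbb{P}^{1}$ and applies Riemann--Hurwitz, which forces you to verify smoothness of the bidegree-$(2,2)$ closure, including at the coordinate lines $x=0,\infty$ and $y=0,\infty$. You rightly flag this as the main work and sketch it, but it is a genuine case analysis depending on which steps lie in $\mathcal{S}$ (and in fact one must be careful at zeros of $a(x)$, where one sheet escapes to $y=\infty$). Nothing is wrong here, but the paper's substitution $u=b(x)+2a(x)y$ makes all of this unnecessary: it hands you the normalization directly, so the four branch points read off from the simple roots of $d$ are automatically the complete ramification data, and the genus follows without any chart-by-chart verification.
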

\begin{proof}
Subsection \ref{k} yields that $\sum_{(i,j)\in\mathcal{S}}x^{i}y^{j}-1/z=0$
if and only if $[b(x)+2a(x)y]^{2}=d(x)$. But the Riemann surface of the square root of a
third or fourth-degree polynomial with distinct roots has genus one, see, \textit{e.g.}, \cite{JS}.
Therefore the genus of $\mathscr{K}$ is also one.
\end{proof}

\begin{rem}
{\rm Note that Proposition \ref{genus_Riemann_surface} cannot be
extended to the singular walks. Indeed, it follows from Subsection
\ref{k} that for these walks, the polynomial $d$
has a double root at $0$ and two simple roots at $x_{3}$ and $x_{4}$,
and it is well known, see \cite{JS}, that the Riemann
surface of the square root of such a polynomial has genus zero.
We also notice that Proposition \ref{genus_Riemann_surface}, implicitly
stated for $z\in]0,1/k[$, cannot be extended to $z=0$ or $z=1/k$ in
the general case.
Indeed, for $z=0$ we have $d(x)=x^{2}$ and the Riemann surface
of the square root of this polynomial is a disjoint union of
two spheres, see \cite{JS}.
For $z=1/k$, it may happen that the genus of $\mathscr{K}$
is still one, but it may also happen that it becomes zero. In
fact, \cite[Part 2.3 and Part 6.1]{FIM} entails that it equals zero
if and only if the two  equalities
$\sum_{(i,j)\in\mathcal{S}}i=0$ and $\sum_{(i,j)\in \mathcal{S}}j=0$ hold.}
\end{rem}

With Proposition \ref{genus_Riemann_surface}, it is immediate that $\mathscr{K}$ is
isomorphic to a certain torus $\mathbb{C}/\Omega$. A suitable lattice $\Omega$---in fact
the \emph{only possible} lattice, up to a homothetic transformation---is made explicit
in \cite[Part 3.1 and Part 3.3]{FIM}, namely, $\Omega=\omega_{1}\mathbb{Z}+\omega_{2}
\mathbb{Z}$, with
     \begin{equation}
     \label{def_omega_1_2}
          \omega_{1}= \imath\int_{x_{1}}^{x_{2}}
          \frac{\text{d}x}{[-d(x)]^{1/2}},
          \ \ \ \ \ \omega_{2}= \int_{x_{2}}^{x_{3}}
          \frac{\text{d}x}{[d(x)]^{1/2}}.
     \end{equation}
Using the same arguments as in \cite[Part 3.3]{FIM}, we immediately obtain
in addition the \emph{uniformization}
     \begin{equation*}
          \mathscr{K}=\{(x(\omega),y(\omega)),\omega\in\mathbb{C}/\Omega\},
     \end{equation*}
with
     \begin{equation}
     \label{uniformization}
          x(\omega)=F(\wp(\omega),\wp'(\omega)),
          \ \ \ \ \ y(\omega)=G(\wp(\omega),\wp'(\omega)),
     \end{equation}
where $F(p,p')=x_{4}+d'(x_{4})/[p-d''(x_{4})/6]$ and $G(p,p')=[-b(F(p,p'))+d'(x_{4})
p'/(2[p-d''(x_{4})/6]^{2})]/[2a(F(p,p'))]$ if $x_{4}\neq \infty$, while
$F(p,p')=[6p-d''(0)]/d'''(0)$ and $G(p,p')=[-b(F(p,p'))-3p'/d'''(0)]/[2a(F(p,p'))]$
if $x_{4}=\infty$,
and where $\wp$ denotes the Weierstrass elliptic function with periods $\omega_{1},\omega_{2}$.
By definition, see \cite{JS,WW},  $\wp$ is equal to
     \begin{equation*}
          \wp(\omega)=\frac{1}{\omega^2}+\sum_{(p_1,p_2)\in\mathbb{Z}^{2}\setminus (0,0)}
          \left[\frac{1}{(\omega-p_1\omega_1-p_2\omega_2)^{2}}-
          \frac{1}{(p_1\omega_1+p_2\omega_2)^{2}}\right],
     \end{equation*}
     and it is well known that it satisfies the differential equation
     \begin{equation}
     \label{diff_eq_wp}
     \wp'(\omega)^{2}=4[\wp(\omega)-\wp(\omega_{1}/2)]
                       [\wp(\omega)-\wp([\omega_{1}+\omega_{2}]/2)]
                       [\wp(\omega)-\wp(\omega_{2}/2)].
     \end{equation}

The main motivation for introducing a uniformization of $\mathscr{K}$ is to render
the roles of $x$ and $y$ more symmetric. This is why we now go one
step further, and we look for the location on $\mathbb{C}$, or equivalently
on the fundamental parallelogram $[0,\omega_{2}[
\times $ $[0,\omega_{1}/\imath[$, of the reciprocal images through the uniformization
\eqref{uniformization} of the
important cycles that are the branch cuts $[x_{1},x_{2}]$, $[x_{3},x_{4}]$, $[y_{1},y_{2}]$
and $[y_{3},y_{4}]$. For the first two, we introduce
     \begin{equation}
     \label{def_f}
          f(t)=\left\{\begin{array}{lll}
          \displaystyle d''(x_{4})/6+d'(x_{4})/[t-x_{4}]& \text{if} & x_{4}\neq \infty,\\
          \displaystyle d''(0)/6+d'''(0)t/6 \phantom{{1^1}^{1}}& \text{if} & x_{4}=\infty.\end{array}\right.
     \end{equation}
It is such that $x(\omega)=f^{-1}(\wp(\omega))$, see \eqref{uniformization}. Further, it follows from the
construction that $\wp(\omega_{1}/2)=f(x_{3})$, $\wp([\omega_{1}+\omega_{2}]/2)=f(x_{2})$ and $\wp(\omega_{1}/2)=f(x_{1})$;
for a proof, see \cite[Part 3.3]{FIM}. As for $[y_{1},y_{2}]$
and $[y_{3},y_{4}]$, we need to introduce a new period, namely,
     \begin{equation}
     \label{def_omega_3}
          \omega_{3}= \int_{X(y_{1})}^{x_{1}}
          \frac{\text{d}x}{[d(x)]^{1/2}}.
     \end{equation}
In \cite[Part 3.3]{FIM}, the following is shown.
     \begin{lem}
     \label{lem_omega_2>3}
          The period $\omega_{3}$ lies in $]0,\omega_{2}[$.
     \end{lem}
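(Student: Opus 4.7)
My plan is to reduce the claim $\omega_3\in\,]0,\omega_2[$ to a single statement about the real position of $X(y_1)$, namely that $X(y_1)\in\,]-\infty,x_1[$, and then derive both inequalities by comparing $\omega_3$ to standard hyperelliptic period integrals attached to $\mathscr{K}$.

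First I would check that $X(y_1)$ is real. Since $y_1$ is a root of the real polynomial $\widetilde{d}$ defined in \eqref{d_d_tilde}, and since by Subsection \ref{k} all four roots of $\widetilde{d}$ are real, $y_1\in\mathbb{R}$. At a branch point the square root in \eqref{def_XX} vanishes, so $X_0(y_1)=X_1(y_1)=-\widetilde{b}(y_1)/[2\widetilde{a}(y_1)]\in\mathbb{R}$. Next I would locate $X(y_1)$ on the real axis relative to $x_1$ using Lemma \ref{Properties_curves_0}, stated there for $Y([x_1,x_2])$ but valid upon exchanging the roles of $x$ and $y$ for $X([y_1,y_2])$: the curve $X([y_1,y_2])$ is closed in $\mathbb{C}\cup\{\infty\}$, symmetric with respect to the real axis, contains $x_1$ in its interior $\mathscr{G}X([y_1,y_2])$, and is disjoint from $[x_3,x_4]$. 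On the open interval $\,]y_1,y_2[$ the discriminant $\widetilde{d}$ is strictly negative (as it changes sign between its consecutive real roots), so $X_0(y)$ and $X_1(y)$ form a genuine complex-conjugate pair there; hence the only real points of $X([y_1,y_2])$ are the two endpoints $X(y_1)$ and $X(y_2)$. Since $x_1$ lies strictly in the interior of a real-symmetric Jordan curve whose real intersections are those two endpoints, the endpoints must straddle $x_1$, and (with the ordering convention from Subsection \ref{k}) this yields $X(y_1)<x_1<X(y_2)$.

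Given the position of $X(y_1)$, positivity of $\omega_3$ is immediate. The polynomial $d$ from \eqref{d_d_tilde} has positive leading coefficient of degree four (or negative leading coefficient of degree three if $x_4=\infty$), and by the ordering in Subsection \ref{k} it is strictly positive on $]-\infty,x_1[$; hence $1/\sqrt{d(x)}>0$ on $]X(y_1),x_1[$ with an integrable singularity of order one-half at $x_1$, so $\omega_3>0$. For the upper bound I would invoke the standard relation on the elliptic Riemann surface $\{w^2=d(x)\}$, namely
\[
\int_{-\infty}^{x_1}\frac{dx}{[d(x)]^{1/2}}+\int_{x_4}^{+\infty}\frac{dx}{[d(x)]^{1/2}}=\omega_2,
\]
where the second summand is absent (equal to zero) if $x_4=\infty$; this identity expresses that the two cycles on opposite sides of the cut $[x_2,x_3]$ are homologous on the torus and is precisely \eqref{def_omega_1_2} read through the uniformization. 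Since $X(y_1)$ is finite and $>-\infty$, we obtain
\[
\omega_3=\int_{X(y_1)}^{x_1}\frac{dx}{[d(x)]^{1/2}}<\int_{-\infty}^{x_1}\frac{dx}{[d(x)]^{1/2}}\leq\omega_2,
\]
with strict inequality in the second place when $x_4<\infty$ (positive extra contribution from $]x_4,+\infty[$), and with strict inequality in the first place in either case. This gives $\omega_3<\omega_2$.

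The main obstacle is the geometric step locating $X(y_1)$ strictly to the left of $x_1$: all the period arithmetic is formal once that is known, but the sign of $X(y_1)-x_1$ is a global topological fact about the image curve $X([y_1,y_2])$ that rests on Lemma \ref{Properties_curves_0}. An alternative (and cleaner) route would bypass Lemma \ref{Properties_curves_0} by working directly on the uniformized torus $\mathbb{C}/\Omega$: identify the preimage of the point $(X(y_1),y_1)\in\mathscr{K}$ under \eqref{uniformization}, observe that it lies on the real horizontal slice $]0,\omega_2[$ of the fundamental parallelogram, and read off $\omega_3$ as its horizontal coordinate; the inequality $\omega_3\in\,]0,\omega_2[$ then follows tautologically from membership in this open segment.
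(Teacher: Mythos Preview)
The paper does not prove this lemma at all; it simply cites \cite[Part 3.3]{FIM}. So your argument is an independent attempt, and the overall strategy---locate $X(y_1)$ to the left of $x_1$, then compare $\omega_3$ to the full real period via the standard elliptic identity $\int_{-\infty}^{x_1}+\int_{x_4}^{+\infty}=\int_{x_2}^{x_3}$---is sound. Once $X(y_1)\in\,]-\infty,x_1[$ is established, the two inequalities follow exactly as you say.

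The gap is precisely where you place it, but it is not closed by what you wrote. After showing that the real-symmetric Jordan curve $X([y_1,y_2])$ meets the real axis only at $X(y_1)$ and $X(y_2)$ and that these straddle $[x_1,x_2]$, you invoke ``the ordering convention from Subsection~\ref{k}'' to conclude $X(y_1)<x_1<X(y_2)$. There is no such convention: Subsection~\ref{k} orders the branch points by modulus, and says nothing about which of $X(y_1),X(y_2)$ falls on which side of $[x_1,x_2]$. This step needs an actual argument (for instance a monotonicity or continuity argument in $z$, or identifying the images under the uniformization), not a reference. Without it, the sign of $\omega_3$ and the validity of the bound $\omega_3<\int_{-\infty}^{x_1}$ are both unsupported.

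Your proposed alternative---locate the point $(X(y_1),y_1)\in\mathscr{K}$ on the fundamental parallelogram via \eqref{uniformization} and read off its real coordinate---is exactly the approach taken in \cite{FIM}, and it does avoid the case analysis entirely: by construction the preimage lies on the horizontal segment $\,]0,\omega_2[\,$, and its abscissa is $\omega_3$ essentially by the definition \eqref{def_omega_3} of that integral. That route is both correct and cleaner; I would make it the primary argument rather than a parenthetical fallback.
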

By using the same analysis of \cite[Part 5.5]{FIM}, we
obtain the following pleasing result.
     \begin{prop}
     \label{transformation_cycles_uniformization}
     We have
     \begin{align*}
     x^{-1}([x_{1},x_{2}])&=[0,\omega_{1}[+\omega_{2}/2,
     &x^{-1}([x_{3},x_{4}])&=[0,\omega_{1}[,\\
     y^{-1}([y_{1},y_{2}])&=[0,\omega_{1}[+(\omega_{2}+\omega_{3})/2,
     &y^{-1}([y_{3},y_{4}])&=[0,\omega_{1}[+\omega_{3}/2.
     \end{align*}
     \end{prop}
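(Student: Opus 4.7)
Following the outline of \cite[Part 5.5]{FIM}, the plan is to exploit the explicit relation $x(\omega)=f^{-1}(\wp(\omega))$ together with the classical description of the level sets of the Weierstrass function on its fundamental parallelogram. Since $\omega_{1}\in\imath\mathbb{R}_{>0}$ and $\omega_{2}\in\mathbb{R}_{>0}$, the three half-periods $\omega_{1}/2$, $\omega_{2}/2$ and $(\omega_{1}+\omega_{2})/2$ are precisely the zeros of $\wp'$, and $\wp$ maps each half-side of the parallelogram bijectively and monotonically onto a real interval whose endpoints belong to $\{\wp(\omega_{1}/2),\wp(\omega_{2}/2),\wp((\omega_{1}+\omega_{2})/2),\infty\}$.

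For the first two equalities, since $f$ is the M\"obius transformation sending $x_{4}$ to $\infty$ and since $\wp$ has a double pole at $0$ with $\wp(\omega_{1}/2)=f(x_{3})$, we obtain at once $x(0)=x_{4}$ and $x(\omega_{1}/2)=x_{3}$. The evenness $\wp(-\omega)=\wp(\omega)$ combined with $\omega_{1}$-periodicity shows that $\omega\mapsto x(\omega)$ sends $[0,\omega_{1}[$ onto $[x_{3},x_{4}]$, covering it twice. Using $\wp(\omega_{2}/2)=f(x_{1})$ and $\wp((\omega_{1}+\omega_{2})/2)=f(x_{2})$ in exactly the same way yields $x^{-1}([x_{1},x_{2}])=[0,\omega_{1}[+\omega_{2}/2$.

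For the $y$-cuts, the strategy is to prove that $y(\omega)$ exhibits the same structure as $x(\omega)$, but shifted on the torus by $\omega_{3}/2$. The involution $\omega\mapsto -\omega$ interchanges the two sheets of the $x$-projection, while the two sheets of the $y$-projection are interchanged by another involution of $\mathbb{C}/\Omega$ whose fixed points are translates of $\{0,\omega_{1}/2,\omega_{2}/2,(\omega_{1}+\omega_{2})/2\}$ by $\omega_{3}/2$. Substituting $x=x(\omega)$ in \eqref{def_omega_3} and using the identity $\textnormal{d}\omega=\textnormal{d}x/[d(x)]^{1/2}$ (see \cite[Part 3.3]{FIM}) identifies the magnitude of this shift as exactly $\omega_{3}/2$, and the last two formulas follow by the argument of the preceding paragraph applied to $y(\omega-\omega_{3}/2)$.

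The main obstacle lies in the bookkeeping of orientations: one must check, for each non-singular model, that the labelling $x_{1}<x_{2}<x_{3}<x_{4}$ (and the analogous chain for the $y_{k}$) corresponds to the half-period labelling prescribed by \eqref{diff_eq_wp}, rather than to some permutation thereof. Lemma~\ref{lem_omega_2>3}, which guarantees $\omega_{3}\in\,]0,\omega_{2}[$, plays a crucial role here: it ensures that the four vertical segments in the statement are pairwise disjoint and entirely contained in the fundamental parallelogram $[0,\omega_{2}[\,\times\,[0,\omega_{1}/\imath[$. The detailed verification was carried out in \cite[Part 5.5]{FIM} for $z=1/k$; it extends verbatim to every $z\in\,]0,1/k[$ since by Proposition~\ref{genus_Riemann_surface} the surface $\mathscr{K}$ remains of genus one throughout this range.
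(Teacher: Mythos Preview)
Your proposal is correct and follows the same approach as the paper, which in fact gives no proof at all beyond the sentence ``By using the same analysis of \cite[Part 5.5]{FIM}, we obtain the following pleasing result.'' Your sketch fleshes out precisely that analysis---the identification of the $x$-cuts via $x(\omega)=f^{-1}(\wp(\omega))$ and the values of $\wp$ at the half-periods, followed by the $\omega_{3}/2$-shift argument for the $y$-cuts---so there is nothing to compare: you have simply written out what the paper leaves to the reference.
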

In our opinion, the latter result justifies the introduction of the
uniformization: indeed, on the fundamental parallelogram, the quartic curves
$X([y_1,y_2])$ and $Y([x_1,x_2])$ just become segments! Proposition
\ref{transformation_cycles_uniformization} in particular asserts that
$\omega_3$ naturally appears when locating the branch cuts on the
uniformization space. In fact, this period also plays a crucial role with
respect to the group $W=\langle \Psi,\Phi\rangle$ defined in \eqref{def_generators_group}.
Indeed, the two birational transformations $\Psi$ and $\Phi$ of $\mathbb{C}^{2}$ can \emph{a fortiori} be
understood as automorphisms of $\mathscr{K}$, and we recall \cite[Part 3.1]{FIM}
that thanks to \eqref{uniformization}, these automorphisms of
$\mathscr{K}$ become on $\mathbb{C}/\Omega$ the automorphisms
$\psi$ and $\phi$ with the following expressions:
     \begin{equation}
     \label{expression_automorphisms_torus}
          \psi(\omega)=-\omega,\ \ \ \ \
          \phi(\omega)=-\omega+\omega_{3}.
     \end{equation}
They satisfy $\psi\circ\psi=\phi\circ\phi=\text{id}$, $x\circ\psi=x$,
$y\circ\psi=[c(x)/a(x)]/y$, $x\circ\phi=[\widetilde{c}(y)/\widetilde{a}(y)]/x$ and
$y\circ\phi=y$.

Our goal now is to find a characterization of the finiteness
of the group in terms of $\omega_3$. We shall obtain it in Proposition \ref{lemma_omega_2_3_finite}.
The latter requires us the preliminary result hereunder.

     \begin{lem}
     \label{lemma_omega_2>=<3}
          We have $\omega_{3}< \omega_{2}/2$
          (resp. $\omega_{3}=\omega_{2}/2$, $\omega_{3}>\omega_{2}/2$) if and only
          if the covariance \eqref{def_covariance}
          of the walk is negative (resp.\ zero, positive).
     \end{lem}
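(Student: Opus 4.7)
The plan is to reduce the comparison of $\omega_3$ with $\omega_2/2$ to an algebraic condition on the step set, and then identify that condition with the sign of the covariance $\sum_{(i,j)\in\mathcal{S}} ij$. First I would settle the equality case. By Lemma \ref{lem_omega_2>3}, $\omega_3\in\,]0,\omega_2[$; combined with the fact that $\omega_1$ is purely imaginary while $\omega_3\in\mathbb{R}$ and $\Omega=\omega_1\mathbb{Z}+\omega_2\mathbb{Z}$, the condition $2\omega_3\in\Omega$ is equivalent to $\omega_3=\omega_2/2$. Via the uniformization \eqref{expression_automorphisms_torus}, $\phi\circ\psi$ acts on the torus as translation by $\omega_3$ and $\psi\circ\phi$ as translation by $-\omega_3$, so $\omega_3=\omega_2/2$ if and only if these translations coincide on $\mathbb{C}/\Omega$; equivalently, if and only if the involutions $\Phi$ and $\Psi$ commute as automorphisms of $\mathscr{K}$.

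Next I would convert this commutation into an algebraic condition using the explicit formulas \eqref{def_generators_group} for $\Psi$ and $\Phi$. Computing $\Phi\circ\Psi$ and $\Psi\circ\Phi$ as birational maps of $\mathbb{C}^2$ and then imposing the kernel relation \eqref{kernel_pt} to restrict to $\mathscr{K}$ produces, after clearing denominators and matching leading monomials, a polynomial identity in the coefficients of $\sum_{(i,j)\in\mathcal{S}} x^i y^j$. The clean way to see that this identity simplifies to $\sum_{(i,j)\in\mathcal{S}} ij=0$ is via the classical description of commuting involutions on an elliptic curve: two involutions of the form $\omega\mapsto -\omega+a$ and $\omega\mapsto-\omega+b$ commute if and only if $a-b$ is a $2$-torsion point, which translates into a coincidence condition on fixed-point sets. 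Transporting this back to $\mathscr{K}$ via \eqref{uniformization} and simplifying using the differential equation \eqref{diff_eq_wp} for $\wp$ turns the $2$-torsion condition into the vanishing of the covariance.

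For the strict inequalities, I would argue by real-analytic continuity in $z$. The function $z\mapsto 2\omega_3(z)-\omega_2(z)$ is real-analytic on $]0,1/k[$, so by the equality case its vanishing set is either all of $]0,1/k[$ (precisely when the covariance is zero) or isolated, hence in fact empty by analyticity. Consequently, when $\sum ij\neq 0$, $2\omega_3-\omega_2$ has constant sign throughout $]0,1/k[$; that sign can be pinned down by an asymptotic expansion as $z\to 0^+$, where the branch points $x_i(z)$ and $X(y_1(z))$ admit series expansions in $z$ and the leading non-trivial term of $2\omega_3-\omega_2$ turns out to be a positive multiple of $\sum_{(i,j)\in\mathcal{S}} ij$.

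The main obstacle is the algebraic reduction in the second paragraph, namely the explicit identification of the commutator identity with the covariance. A brute-force expansion yields a lengthy polynomial relation involving all the coefficients of the step-set generating series; extracting the covariance cleanly requires either the geometric $2$-torsion argument sketched above or, alternatively, a careful Taylor expansion of the polynomials $a,b,c,\widetilde{a},\widetilde{b},\widetilde{c}$ near a distinguished point (such as $(x,y,z)=(1,1,1/k)$), where the covariance arises naturally as an off-diagonal entry of the Hessian of the kernel.
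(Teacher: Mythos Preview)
Your plan follows the same skeleton as the paper's sketch: settle the equality case by linking $\omega_3=\omega_2/2$ to the commutativity of $\Phi$ and $\Psi$ (i.e.\ to the group having order $4$), and then pin down the two strict signs. The paper handles the strict inequalities differently: rather than expanding $2\omega_3-\omega_2$ as $z\to 0^+$, it deforms through the space of (weighted) step sets and checks a single explicit walk for each sign of the covariance, the point being that once the equality locus is identified with ``covariance $=0$'', connectedness of each covariance half-space makes one verification per sign sufficient. Your asymptotic route is in principle more self-contained, but you have not justified the claimed leading term; as $z\to 0$ the branch points coalesce and both elliptic integrals diverge, so isolating the first nonvanishing term of their difference is a genuine computation that needs to be carried out, not asserted.

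There is also a subtlety in your equality step that you pass over. You correctly obtain that, for a fixed $z$, $\omega_3(z)=\omega_2(z)/2$ is equivalent to $\Phi$ and $\Psi$ commuting \emph{as automorphisms of $\mathscr{K}_z$}. But the algebraic condition you then want to extract---``covariance $=0$''---is $z$-independent, and the birational maps $\Phi,\Psi$ of $\mathbb{C}^2$ do not involve $z$ either. Commutation on $\mathscr{K}_z$ for one particular $z$ is \emph{a priori} weaker than commutation on $\mathbb{C}^2$, and it is only the latter that you can read off directly as a polynomial identity in the step-set coefficients. Bridging this gap requires exactly the kind of analytic-continuation argument used in the proof of Proposition~\ref{general_lemma_omega_2_3_finite} (varying $z$ and using density of $\bigcup_z\mathscr{K}_z$), and you should make that explicit. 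Finally, your phrase ``isolated, hence in fact empty by analyticity'' is not a valid deduction on its own; it only becomes correct once the equality case is established \emph{pointwise in $z$}, which is precisely what the previous subtlety is about.
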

\begin{proof}
We just give here the sketch of the proof; for the details, we refer to
\cite[Section 4]{KR}. The first step consists in noticing that $\omega_{3}=\omega_{2}/2$
if and only if the covariance equals zero, which follows from showing that
both assertions are equivalent to having a group of order $4$. This essentially
implies that it suffices to find \emph{one} walk
for which we simultaneously have $\omega_3<\omega_2/2$ (resp.\ $\omega_3<\omega_2/2$)
and a negative (resp.\ positive) covariance. This verification is performed
in \cite{KR} for the walk $\{{\sf E},{\sf S},{\sf NW}\}$ (resp.\ $\{{\sf NE},{\sf S},{\sf W}\}$).
\end{proof}
It could appear surprising to introduce the covariance here. As
we will see throughout, its sign actually strongly influences the
behavior of many quantities, see Theorem \ref{nature_CGF}, itself summarized in Figure \ref{Tables}.
If the group $W=\langle \Psi,\Phi\rangle$ is finite,
we can give the following important precisions to Lemma \ref{lemma_omega_2>=<3}.
     \begin{prop}
     \label{general_lemma_omega_2_3_finite}
          For any $k\geq 2$, the group $W=\langle \Psi,\Phi\rangle$ has order
          $2k$ if and only if there exists an integer $q\in\{1,\ldots ,k-1\}$,
          independent of $z$ and having no common divisors with $k$, such that
          for all $z\in]0,1/k[$, $\omega_{3}=(q/k)\omega_{2}$.
     \end{prop}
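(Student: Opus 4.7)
The plan is to translate the finiteness condition on $W$ into an arithmetic statement about $\omega_3$ modulo the lattice $\Omega=\omega_1\mathbb{Z}+\omega_2\mathbb{Z}$, using the torus description \eqref{expression_automorphisms_torus} of $\psi$ and $\phi$.

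First, since $\Psi$ and $\Phi$ are involutions, $W$ is dihedral of order $2m$, where $m$ denotes the order of $\Phi\circ\Psi$. Using the identification of $W$ with $\langle\psi,\phi\rangle$ via the uniformization \eqref{uniformization}, a direct computation from \eqref{expression_automorphisms_torus} gives $(\phi\circ\psi)(\omega)=\omega+\omega_3\pmod{\Omega}$. Hence $m$ equals the order of the translation by $\omega_3$ in $\mathbb{C}/\Omega$, i.e.\ the smallest positive integer such that $m\omega_3\in\Omega$ (when one exists). In particular, $W$ has order $2k$ if and only if $k$ is the smallest positive integer with $k\omega_3\in\Omega$.

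Second, I would exploit the reality properties of the periods. By \eqref{def_omega_1_2}, $\omega_1$ is purely imaginary while $\omega_2$ is real and positive; by \eqref{def_omega_3} together with Lemma \ref{lem_omega_2>3}, $\omega_3$ lies in $]0,\omega_2[$ and is thus real. Writing $k\omega_3=n\omega_1+q\omega_2$ with $n,q\in\mathbb{Z}$ and separating real and imaginary parts forces $n=0$, so $k\omega_3=q\omega_2$ with $q\in\{1,\dots,k-1\}$ (the range coming from $\omega_3\in\,]0,\omega_2[$). The minimality of $k$ is then equivalent to $\gcd(q,k)=1$: if $d=\gcd(q,k)>1$, then $(k/d)\omega_3=(q/d)\omega_2\in\Omega$ would contradict minimality, and the converse is immediate.

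Third, I would address the independence of $q$ from $z$. The birational transformations in \eqref{def_generators_group} depend only on $\mathcal{S}$, so the order of $W$ is a combinatorial invariant of the step set. Granted that this order is $2k$, the previous step produces, for each $z\in]0,1/k[$, a unique integer $q(z)\in\{1,\dots,k-1\}$ coprime to $k$ with $\omega_3(z)=(q(z)/k)\omega_2(z)$. Since $\omega_2(z)$ and $\omega_3(z)$ are continuous functions of $z$ (immediate from the elliptic integrals \eqref{def_omega_1_2} and \eqref{def_omega_3} together with the continuity of the branch points $x_i(z)$), the integer $q(z)=k\,\omega_3(z)/\omega_2(z)$ varies continuously, hence is constant. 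The converse direction follows from the first two steps. The main obstacle is really the reality-based reduction in Step 2; one should also verify briefly that the passage from $W$ (birational transformations of $\mathbb{C}^2$) to $\langle\psi,\phi\rangle$ (automorphisms of $\mathbb{C}/\Omega$) is order-preserving, which holds because the correspondence is injective on the dihedral group generated by the involutions.
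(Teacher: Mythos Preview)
Your argument is mostly on target, and Steps 2 and 3 are fine; but there is a genuine gap in Step 1, precisely at the point you flag and then dismiss at the end. You assert that the passage from $W=\langle\Psi,\Phi\rangle$ (birational maps of $\mathbb{C}^{2}$) to $\langle\psi,\phi\rangle$ (automorphisms of $\mathbb{C}/\Omega$ for a fixed $z$) is order-preserving ``because the correspondence is injective on the dihedral group generated by the involutions.'' That injectivity is not true for a single fixed $z$: restriction $W\to\langle\psi_z,\phi_z\rangle$ is a surjection of dihedral groups, and its kernel can be nontrivial. Concretely, for a walk with infinite $W$ there exist values $z_0\in\,]0,1/k[$ at which $\omega_3(z_0)/\omega_2(z_0)$ is rational (as noted in Subsection~\ref{CGF_infinite}); for such $z_0$ the group $\langle\psi_{z_0},\phi_{z_0}\rangle$ is finite while $W$ is not, so the restriction is certainly not injective. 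Hence you cannot conclude, from $k\omega_3\in\Omega$ with $k$ minimal on the torus side, that $(\Phi\circ\Psi)^{\circ k}=\mathrm{id}$ as a birational map of $\mathbb{C}^{2}$.

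The paper closes exactly this gap, and does so by exploiting the ``for all $z$'' in the hypothesis rather than a fixed $z$. If $\omega_3/\omega_2=q/k$ holds for every $z\in\,]0,1/k[$, then for each $x$ one has $(\Phi\circ\Psi)^{\circ k}(x,Y_0(x;z))=(x,Y_0(x;z))$ for all such $z$; since $\{Y_0(x;z):z\in\,]0,1/k[\}$ is a nondiscrete set, analytic continuation in the second variable forces $(\Phi\circ\Psi)^{\circ k}(x,y)=(x,y)$ identically on $\mathbb{C}^{2}$. This is the missing ingredient in your converse direction, and it is also what pins down $r=k$ in the forward direction (the paper shows $\omega_3/\omega_2$ is a holomorphic, hence constant, rational $q/r$ with $r\mid k$, and then invokes the converse to rule out $r<k$). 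Your continuity argument in Step 3 is a correct substitute for the paper's holomorphicity, but it does not by itself supply the $\mathbb{C}^{2}$-level identity $(\Phi\circ\Psi)^{\circ k}=\mathrm{id}$.
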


\begin{proof}
We first prove the converse sense
and for this purpose, we reintroduce the variable
$z$ in the notations.

Assume first that for  \emph{some} value $z\in]0,1/k[$, one has $\omega_{3}/\omega_{2}=q/k$,
where the integers $q$ and $k$ have no common divisors. This means that $\langle \psi,\phi
\rangle$ is a finite group of order $2k$, see \eqref{expression_automorphisms_torus},
which is equivalent to the fact that $\langle \Psi,\Phi\rangle$ has order $2k$ on
$\mathscr{K}$---but
\emph{a priori} not on $\mathbb{C}^{2}$. This implies that for any $x\in\mathbb{C}$,
the equalities $(\Phi\circ\Psi)^{\circ k}(x,Y_{0}(x;z))=(x,Y_{0}(x;z))$ and
$(\Phi\circ\Psi)^{\circ k}(x,Y_{1}(x;z))=(x,Y_{1}(x;z))$ hold.

Suppose now that for \emph{any} value $z\in]0,1/k[$, the quantity
$\omega_{3}/\omega_{2}$ is this same rational number $q/k$.
In particular for any fixed $x\in \mathbb{C}$ and all $z\in]0,1/k[$,
$(\Phi\circ\Psi)^{\circ k}(x,Y_{0}(x;z))=(x,Y_{0}(x;z))$. In other
words, for any fixed $x\in \mathbb{C}$ and any $y\in\{Y_{0}(x;z):
z\in ]0,1/k[\}$, $(\Phi\circ\Psi)^{\circ k}(x,y)=(x,y)$. Since the
set $\{Y_{0}(x;z): z\in ]0,1/k[\}$ is not isolated, by analytic
continuation we obtain that for any $x\in \mathbb{C}$ and any $y\in
\mathbb{C}$, $(\Phi\circ\Psi)^{\circ k}(x,y)=(x,y)$, in such a way
that $\langle \Psi,\Phi\rangle $ is finite as a group of
birational transformations of $\mathbb{C}^{2}$, of order less than or equal to $2k$.

The group $\langle \Psi,\Phi\rangle $ is of order exactly $2k$ because $\langle
\psi,\phi\rangle$ has order $2k$ and thus we can find some elements
$(x,y)$ of order exactly $2k$. This entails the converse sense of
Proposition \ref{general_lemma_omega_2_3_finite}.

Suppose now that $W=\langle \Psi,\Phi\rangle$ has order $2k$. The group
$\langle \psi,\phi\rangle$ generated by $\psi$ and $\phi$ is \textit{a
fortiori}  finite, of order $2r(z)\leq 2k$, which means that $\inf \{p>0:
(\phi\circ\psi)^{\circ p}=\text{id}\}=r(z)$. With \eqref{expression_automorphisms_torus}
this immediately implies that $r(z)\omega_{3}$ is some point of the lattice $\omega_{1}\mathbb{Z}+
\omega_{2}\mathbb{Z}$, contrary to $p\omega_{3}$ for $p\in\{1,\ldots ,r(z)-1\}$.
But with Lemma \ref{lem_omega_2>3} we have $\omega_{3}\in]0,\omega_{2}[$, so
that we get $r(z)\omega_{3}=q(z)\omega_{2}$, where $q(z)\in\{1,\ldots ,r(z)-1\}$
has no common divisors with $r(z)$.

Moreover, thanks to \eqref{def_omega_1_2} and \eqref{def_omega_3} we know
that $\omega_{3}/\omega_{2}=q(z)/r(z)$ is a holomorphic function of $z$;
taking rational values it has to be constant, say $\omega_{3}/\omega_{2}=q/r$.
Finally, if $r$ was strictly smaller than $k$, then with the first part of the
proof we would obtain that $W=\langle \Psi,\Phi\rangle$ has also order $2r$,
and not $2k$ as assumed, so that $r=k$ and Proposition \ref{general_lemma_omega_2_3_finite} is proved.
\end{proof}

In particular, since it is proved in \cite{BMM} that the group
$W$ can only have the orders $4$, $6$, $8$ and $\infty$, Lemma
\ref{lemma_omega_2>=<3} and Proposition \ref{general_lemma_omega_2_3_finite}
immediately lead  to the following result.
     \begin{prop}
     \label{lemma_omega_2_3_finite}
          The walk is associated with a group $W=\langle \Psi,\Phi\rangle$ of order $4$ if and only if for all $z\in]0,1/k[$,
          $\omega_{3}=\omega_{2}/2$. For $k\in\{3,4\}$, the walk has a group of order $2k$ and a negative
          (resp.\ positive) covariance \eqref{def_covariance} if and only if for all $z\in]0,1/k[$,
          $\omega_{3}=\omega_{2}/k$ (resp.\ $\omega_{3}=\omega_{2}-\omega_{2}/k)$).
     \end{prop}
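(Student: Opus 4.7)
The plan is to read off the statement as a direct arithmetic consequence of the preceding two results, together with the classification in \cite{BMM} that $W=\langle\Psi,\Phi\rangle$ can only have order $4$, $6$, $8$ or $\infty$. Thus only the cases $k\in\{2,3,4\}$ in Proposition \ref{general_lemma_omega_2_3_finite} need be examined, and for each such $k$ one enumerates the admissible $q\in\{1,\ldots,k-1\}$ with $\gcd(q,k)=1$ and then distinguishes the resulting values of $\omega_3/\omega_2$ via Lemma \ref{lemma_omega_2>=<3}.

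First, for a group of order $4$ (\emph{i.e.}\ $k=2$ in Proposition \ref{general_lemma_omega_2_3_finite}), the only integer in $\{1\}$ coprime to $2$ is $q=1$, which forces $\omega_3=\omega_2/2$ for all $z\in]0,1/k[$. Conversely, if $\omega_3=\omega_2/2$ for all such $z$, the converse direction of Proposition \ref{general_lemma_omega_2_3_finite} (applied with $k=2$, $q=1$) gives a group of order $4$. By Lemma \ref{lemma_omega_2>=<3}, this regime coincides exactly with covariance zero, which is consistent with the fact that, by the same lemma, covariance zero cannot coexist with groups of order $6$ or $8$.

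Next, for a group of order $6$ ($k=3$), the admissible exponents are $q\in\{1,2\}$, yielding $\omega_3=\omega_2/3$ or $\omega_3=2\omega_2/3$. By Lemma \ref{lemma_omega_2>=<3} the first value (being strictly less than $\omega_2/2$) characterizes negative covariance and the second (being strictly greater than $\omega_2/2$) characterizes positive covariance; note $2\omega_2/3=\omega_2-\omega_2/3$. An identical argument for $k=4$ restricts $q$ to $\{1,3\}$, leading respectively to $\omega_3=\omega_2/4$ (negative covariance, since $1/4<1/2$) and $\omega_3=3\omega_2/4=\omega_2-\omega_2/4$ (positive covariance, since $3/4>1/2$). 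In both directions, the converse part of Proposition \ref{general_lemma_omega_2_3_finite} ensures that the resulting group has order exactly $2k$ and not some proper divisor.

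There is essentially no obstacle beyond this bookkeeping: the delicate analytic content (passage from a single value of $z$ to all of $]0,1/k[$, and the reciprocal relation between $\omega_3/\omega_2$ and the order of $W$) has already been absorbed into Proposition \ref{general_lemma_omega_2_3_finite}, while the link with the sign of the covariance is supplied by Lemma \ref{lemma_omega_2>=<3}. The only point that requires a brief mention is the exclusion of the ``covariance zero'' subcase within orders $6$ and $8$, but this is automatic: by Lemma \ref{lemma_omega_2>=<3} covariance zero forces $\omega_3=\omega_2/2$, which by the $k=2$ case above already corresponds to order $4$, so it cannot occur in the order-$6$ or order-$8$ regimes.
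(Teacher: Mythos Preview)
Your proof is correct and follows exactly the approach the paper intends: the paper's own ``proof'' is a single sentence stating that the result follows immediately from Lemma~\ref{lemma_omega_2>=<3}, Proposition~\ref{general_lemma_omega_2_3_finite}, and the classification of group orders in \cite{BMM}. You have simply written out the straightforward arithmetic bookkeeping that the paper leaves implicit.
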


The location of the reciprocal images of
$[x_{1},x_{2}]$, $[x_{3},x_{4}]$, $[y_{1},y_{2}]$ and $[y_{3},y_{4}]$
through \eqref{uniformization}
being known, see Proposition \ref{transformation_cycles_uniformization}, 
we give in Figure \ref{Ex_Pa} two examples of the
parallelogram $[0,\omega_{2}[\times $ $[0,\omega_{1}/\imath[$
with its important cycles---in
addition to the one corresponding to Gessel's walk, that can be found in \cite[Figure 5]{KRG}.

\begin{figure}[t]
\begin{center}
\begin{picture}(400.00,85.00)
\includegraphics{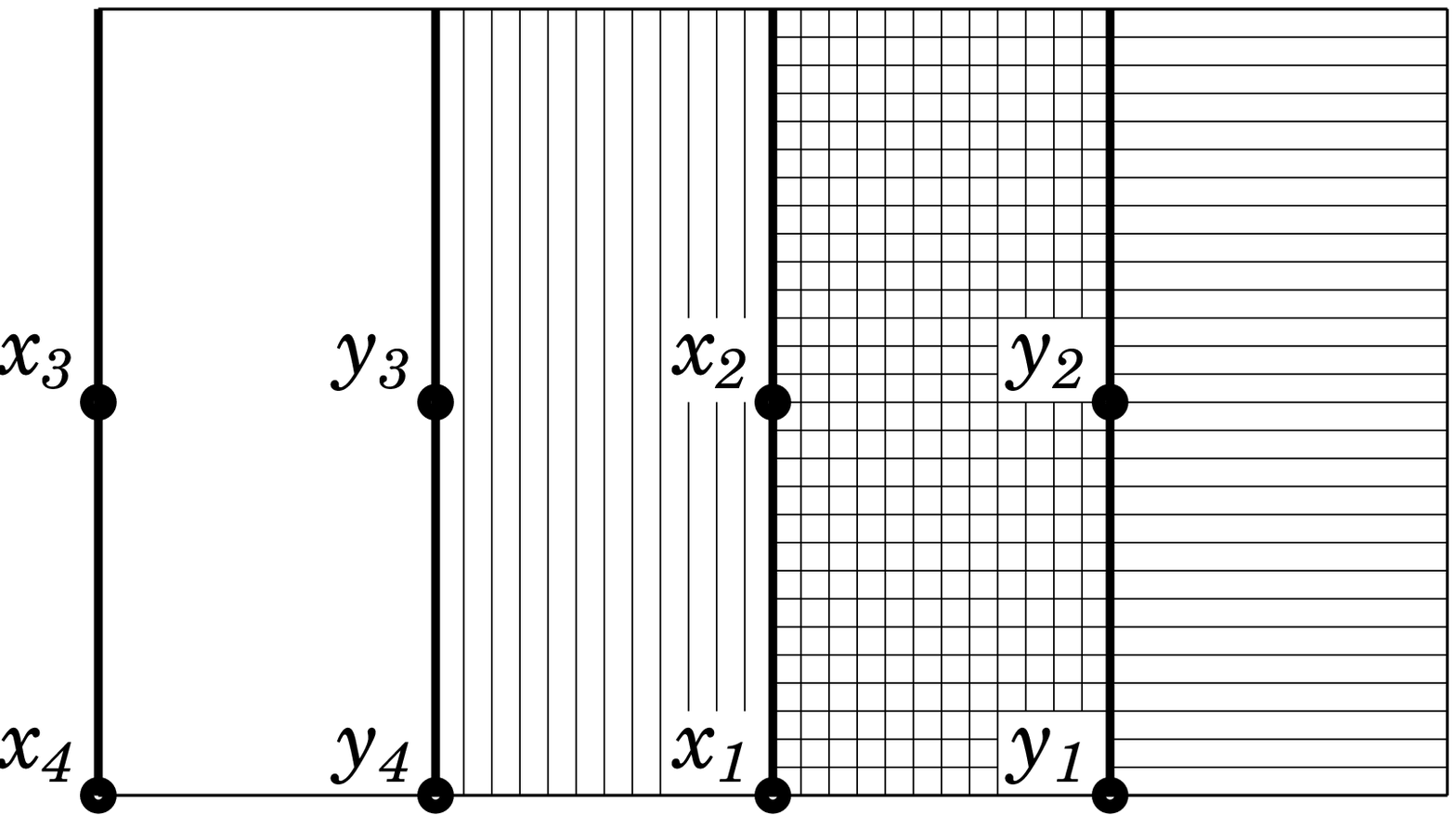}
\hspace{60mm}
\includegraphics{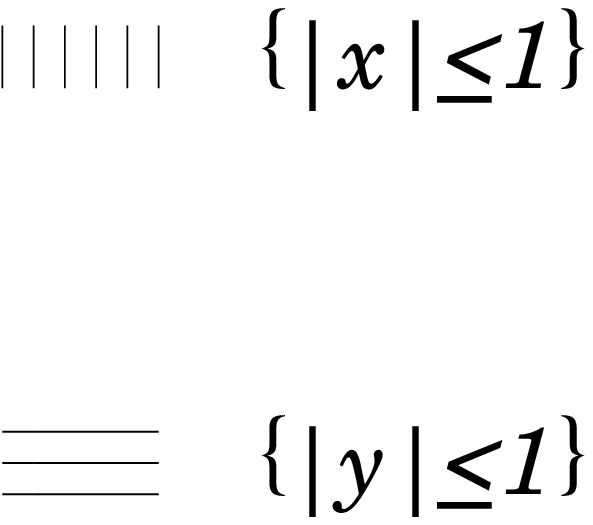}
\hspace{25mm}
\includegraphics{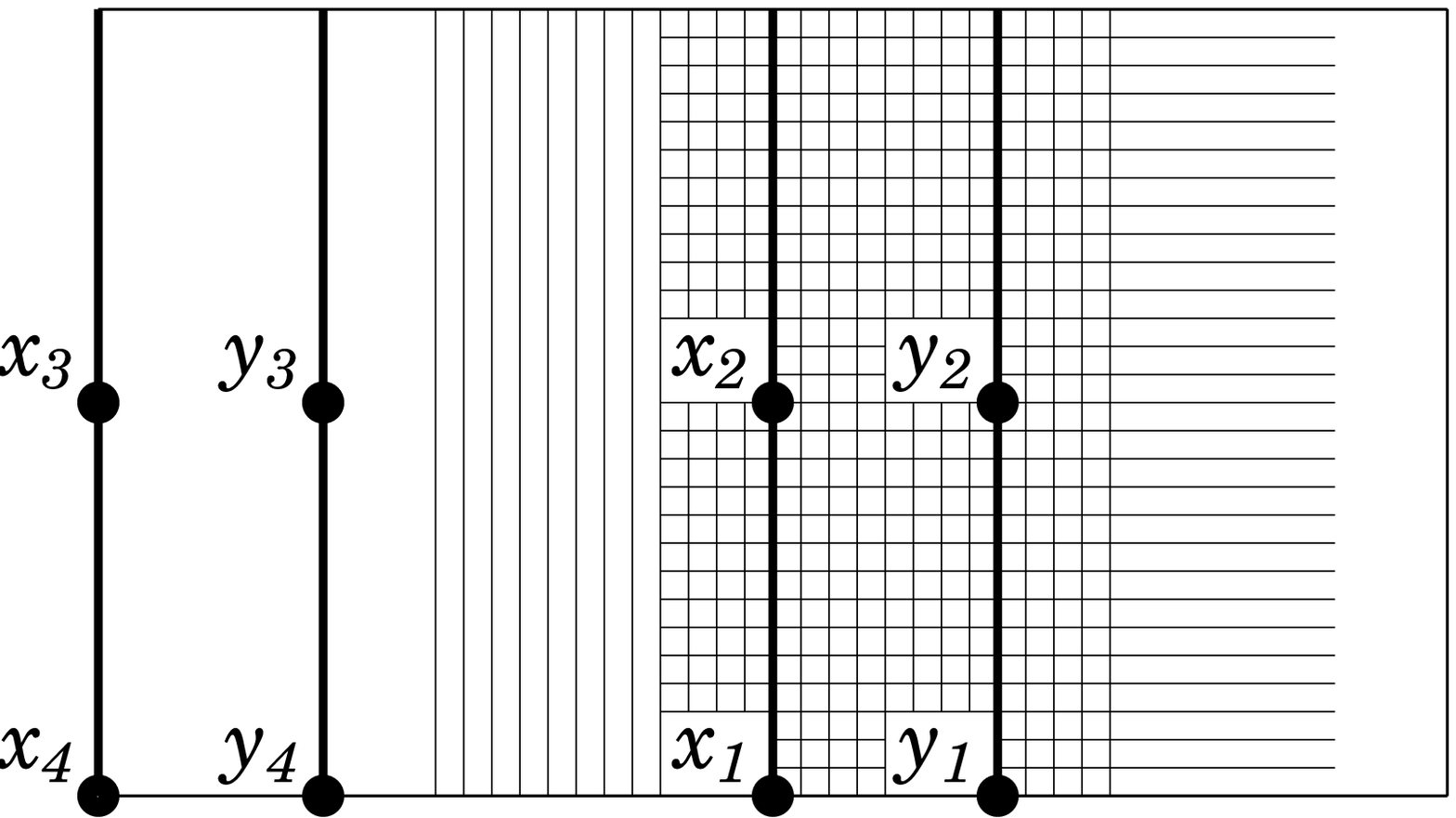}
\end{picture}
\caption{Examples of the parallelogram $[0,\omega_{2}[\times [0,\omega_{1}/\imath[$
         with its important cycles, on the left for $\{{\sf N},{\sf E},{\sf S},{\sf W}\}$
         and on the right for $\{{\sf N},{\sf E},{\sf SE},{\sf S},{\sf W},{\sf NW}\}$}
         \label{Ex_Pa}
\end{center}
\end{figure}

For the walk  $\{{\sf N},{\sf E},{\sf S},{\sf W}\}$, the group has indeed an order equal to $4$,
and Proposition \ref{lemma_omega_2_3_finite} entails $\omega_{3}
=\omega_{2}/2$. Moreover, it is easy to show that in this case $X([y_{1},
y_{2}])=X([y_{3},y_{4}])=$ $Y([x_{1},x_{2}])=Y([x_{3},x_{4}])$ coincides with the unit
circle. In particular, we have $x^{-1}(\{|x|=1\})=([0,\omega_{1}[+\omega_{2}/4)
\cup ([0,\omega_{1}[+3\omega_{2}/4)$ and $y^{-1}(\{|y|=1\})=([0,\omega_{1}[)\cup([0,
\omega_{1}[+\omega_{2}/2)$. See on the left in Figure \ref{Ex_Pa}.

Further, for the walk $\{{\sf N},{\sf E},{\sf SE},{\sf S},{\sf W},{\sf NW}\}$, the group has order $6$ and
the covariance is negative, hence by Proposition \ref{lemma_omega_2_3_finite} we
obtain $\omega_{3}=\omega_{2}/3$. Then by the same arguments as in the proof
of \cite[Proposition 26]{KRG}, we obtain the location of the cycles $x^{-1}(\{|x|=1\})$
and $y^{-1}(\{|y|=1\}$ for this walk, namely, $x^{-1}(\{|x|=1\})=([0,\omega_{1}[+
\omega_{2}/4)\cup([0,\omega_{1}[+3\omega_{2}/4)$ and $y^{-1}(\{|y|=1\}=([0,
\omega_{1}[+5\omega_{2}/12)\cup ([0,\omega_{1}[+11\omega_{2}/12)$.
See on the right in Figure \ref{Ex_Pa}.

\section{Conformal gluing functions}
\label{Study_CGF}

The main subject of Section \ref{Study_CGF} is to introduce and to study suitable CGFs
(see Definition \ref{def_CGF}) $w$ and $\widetilde{w}$ for the sets $\mathscr{G}X([y_{1},y_{2}])$ and $\mathscr{G}
Y([x_{1},x_{2}])$ in all the $74$ non-degenerate cases. For this purpose, we first find explicitly,
in Subsection \ref{fapC}, all appropriate CGFs for
both sets above (essentially thanks to the uniformization studied in Section \ref{Uniformization}),
and in particular we prove Theorem \ref{ann}. Then we observe that the behavior of these
CGFs is strongly influenced by the finiteness of the group $W$ defined in \eqref{def_generators_group}. 
Accordingly, we study
separately, in Subsections \ref{CGF_infinite} and \ref{CGF_finite}, the walks having an infinite
and then a finite group, and we show Theorems \ref{nature_CGF} and \ref{big_theo}.

\subsection{Finding all suitable conformal gluing functions}
\label{fapC}

The book \cite{FIM} provides explicitly \emph{one} CGF, and it does so \emph{only} for $z=1/k$.
We begin here by generalizing this result and by finding
the expressions of \emph{all} possible CGFs for the sets $X([y_{1},y_{2}])$ and $Y([x_{1},x_{2}])$
for \emph{any} value of $z$. Start by quoting \cite[page 126]{FIM}: if we note
$\widehat{w}=w\circ x$ or $\widehat{w}=w\circ y$ with $x,y$ defined in \eqref{uniformization}, then
the problem of finding a CGF $w$ is equivalent to \emph{finding a function $\widehat{w}$ meromorphic
in $[\omega_{2}/2,(\omega_{2}+\omega_{3})/2]\times \mathbb{R}$, $\omega_{1}$-periodic, with
only one simple pole in the domain $[\omega_{2}/2,(\omega_{2}+\omega_{3})/2]\times [0,\omega_{1}/\imath ]$
(this domain is hatched on the left in Figure \ref{Two_Par}), and satisfying to the next two conditions:}
\begin{enumerate}
     \item \label{fi} \emph{For all} $\omega\in[-\omega_{1}/2,\omega_{1}/2]$, $\widehat{w}
           ([\omega_{1}+\omega_{2}]/2+\omega)=\widehat{w}([\omega_{1}+\omega_{2}]/2-\omega)$;
     \item \label{tw} \emph{For all} $\omega\in[-\omega_{1}/2,\omega_{1}/2]$, $\widehat{w}
           ([\omega_{1}+\omega_{2}+\omega_{3}]/2+\omega)=\widehat{w}([\omega_{1}+\omega_{2}
           +\omega_{3}]/2-\omega)$.
\end{enumerate}

Setting $\overline{w}(\omega)=\widehat{w}([\omega_{1}+\omega_{2}]/2+\omega)$, the problem mentioned
above becomes that of \emph{finding a function $\overline{w}$ meromorphic in
$[0,\omega_{3}/2]\times \mathbb{R}$, $\omega_{1}$-periodic, with only one simple pole in the
domain $[0,\omega_{3}/2]\times [-\omega_{1}/(2\imath),\omega_{1}/(2\imath)]$, and such that:}
\begin{enumerate}[label=(\roman{*}'),ref=(\roman{*}')]
     \item \label{fip} \emph{For all} $\omega\in[-\omega_{1}/2,\omega_{1}/2]$,
           $\overline{w}(\omega)=\overline{w}(-\omega)$;
     \item \label{twp} \emph{For all} $\omega\in[-\omega_{1}/2,\omega_{1}/2]$,
           $\overline{w}(\omega_{3}/2+\omega)=\overline{w}(\omega_{3}/2-\omega)$.
\end{enumerate}

Now we notice that by analytic continuation, \ref{fip} allows us to continue the function $\overline{w}$ from
$[0,\omega_{3}/2]\times \mathbb{R}$ up to $[-\omega_{3}/2,\omega_{3}/2]\times \mathbb{R}$, and
next, also by analytic continuation, \ref{twp} enables us to continue $\overline{w}$ as a
$\omega_{3}$-periodic function---since evaluating \ref{twp} at $\omega_{3}/2+\omega$ and using \ref{fip} lead
to $\overline{w}(\omega_{3}+\omega)=\overline{w}(\omega)$. In particular, the problem of finding the CGF
finally becomes the following: \emph{find $\overline{w}$ an even elliptic function with periods $\omega_{1},
\omega_{3}$ with only two simple poles at $\pm p$ (or one double pole at $p$ if $p$ and $-p$ are
congruent modulo the lattice) in the parallelogram $[-\omega_{3}/2,\omega_{3}/2]\times [-\omega_{1}
/(2\imath),\omega_{1}/(2\imath)]$,} see on the right in Figure \ref{Two_Par}.

\medskip

\begin{figure}[t]
\begin{center}
\begin{picture}(400.00,90.00)
\includegraphics{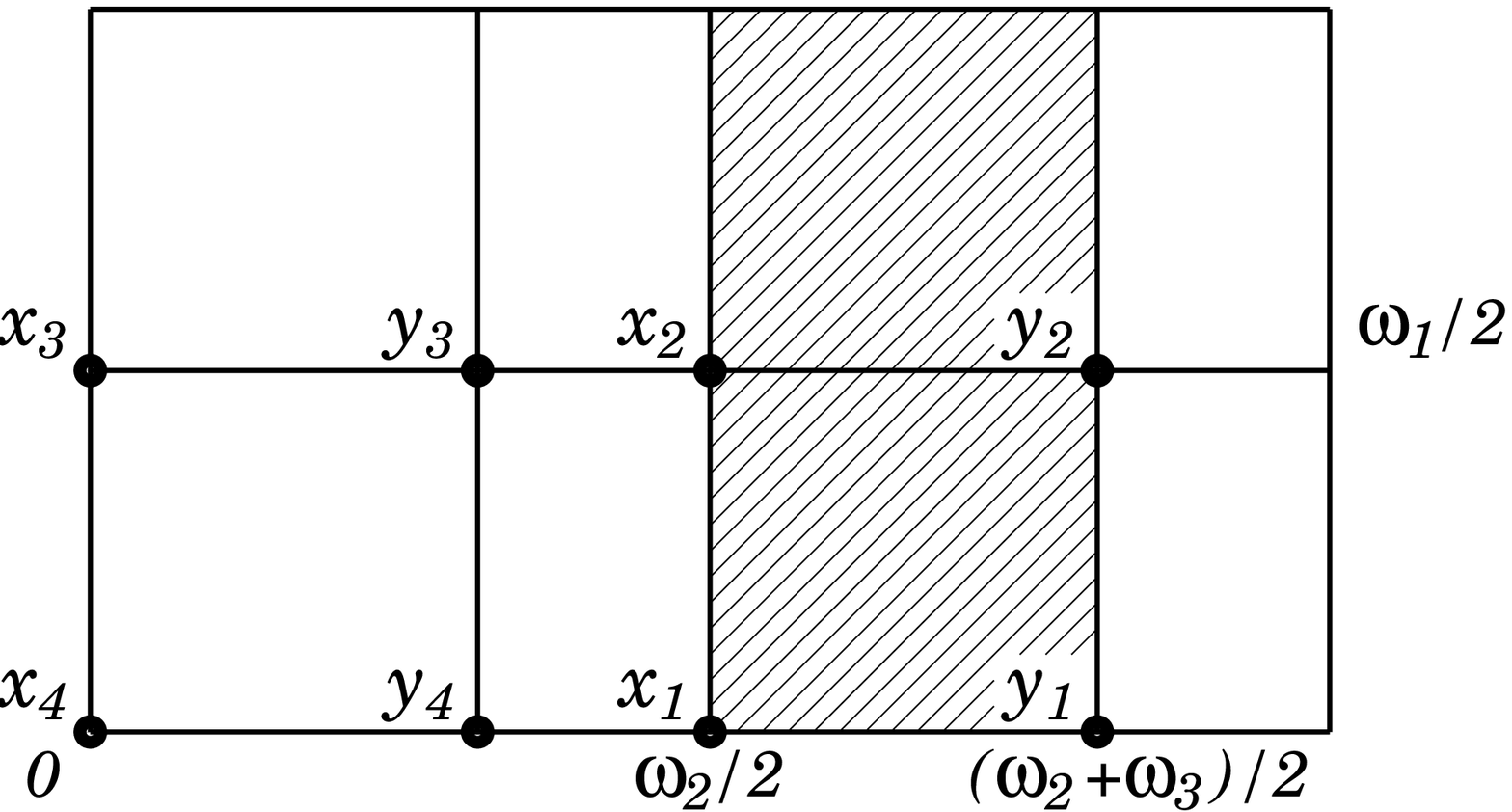}
\hspace{80mm}
\includegraphics{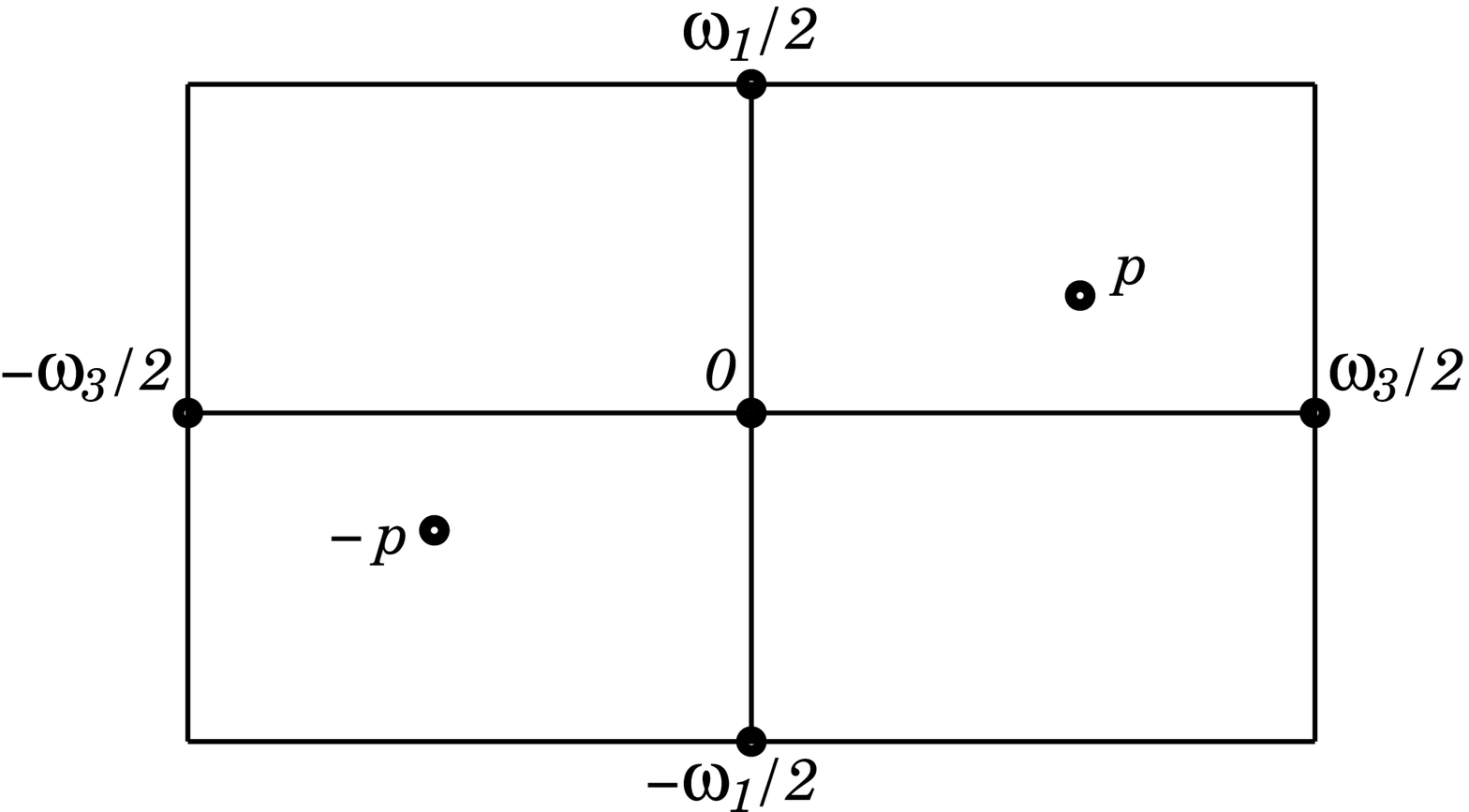}
\end{picture}
\end{center}
\caption{Domains of definition of $\widehat{w}$ (on the left) and $\overline{w}$ (on the right)}
\label{Two_Par}
\end{figure}

Let $\wp_{1,3}$ denote the Weierstrass elliptic function with periods $\omega_{1},\omega_{3}$.
A crucial fact for us is the following.

\begin{lem}
\label{lemma_properties_Weierstrass}
Let $p\in[-\omega_{3}/2,\omega_{3}/2]\times [-\omega_{1}/(2\imath),\omega_{1}/(2\imath)]$.
\begin{itemize}
\item[---] If $p=0$, the solutions of the problem above are $\{\alpha+
\beta \wp_{1,3}(\omega): \alpha,\beta\in\mathbb{C}\}$.
\item[---] If $p\neq 0$, the solutions are $\{\alpha+\beta/[\wp_{1,3}(\omega)-\wp_{1,3}(p)]:
\alpha,\beta\in\mathbb{C}\}$.
\end{itemize}
\end{lem}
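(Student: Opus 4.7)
The strategy is a standard application of Liouville's theorem for elliptic functions: an elliptic function with no poles is constant. It suffices, for each candidate solution $\overline{w}$, to subtract an explicit function in the proposed family so that the difference has no poles, and then conclude.

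First I would verify that the proposed expressions do satisfy the specifications. The Weierstrass function $\wp_{1,3}$ is even, elliptic with periods $\omega_1,\omega_3$, and has a single double pole at $0$ in the fundamental parallelogram $[-\omega_3/2,\omega_3/2]\times[-\omega_1/(2\imath),\omega_1/(2\imath)]$; so when $p=0$, every element of $\{\alpha+\beta\wp_{1,3}\}$ has the required pole structure. For $p\neq 0$, the function $\omega\mapsto 1/[\wp_{1,3}(\omega)-\wp_{1,3}(p)]$ is even, elliptic with the correct periods, and its poles come from the zeros of $\wp_{1,3}(\omega)-\wp_{1,3}(p)$. Those zeros, in the fundamental parallelogram, lie at $\omega\equiv \pm p$ modulo $\omega_1\mathbb{Z}+\omega_3\mathbb{Z}$: if $p\not\equiv -p$ they are two distinct simple zeros (since $\wp_{1,3}$ has total degree $2$), yielding two simple poles at $\pm p$; if $p\equiv -p$, i.e.\ $p$ is a nonzero half-period, then $\wp_{1,3}'(p)=0$ and the unique zero at $p$ is double, yielding one double pole at $p$. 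In both cases the pole data match the specifications.

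Conversely, let $\overline{w}$ be any solution. In the case $p=0$, expand $\overline{w}$ near $0$ as $\beta/\omega^2+O(1)$ (no $1/\omega$ term because $\overline{w}$ is even); then $\overline{w}-\beta\wp_{1,3}$ is an even elliptic function with periods $\omega_1,\omega_3$ which is holomorphic at $0$ and, being free of any other pole in the fundamental parallelogram, is holomorphic everywhere. By Liouville's theorem it equals a constant $\alpha$, giving the claim. In the case $p\neq 0$, pick $\beta$ equal to the leading coefficient of the Laurent expansion of $\overline{w}$ at $p$ (the coefficient of $(\omega-p)^{-1}$ if $p\not\equiv -p$, and the coefficient of $(\omega-p)^{-2}$ if $p$ is a half-period), normalized so that $\overline{w}(\omega)-\beta/[\wp_{1,3}(\omega)-\wp_{1,3}(p)]$ has no pole at $p$. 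Evenness then forces the pole at $-p$ to disappear too (since both $\overline{w}$ and $1/[\wp_{1,3}-\wp_{1,3}(p)]$ are even). The difference is therefore a holomorphic elliptic function, hence a constant $\alpha$, completing the proof.

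The only delicate step is the half-period case in the case $p\neq 0$: one must be careful that a single double pole, rather than two simple poles, is the correct pole structure, and that $1/[\wp_{1,3}(\omega)-\wp_{1,3}(p)]$ produces exactly that. This is handled by the identity $\wp_{1,3}'(p)=0$ at half-periods, which upgrades the simple zero of $\wp_{1,3}(\omega)-\wp_{1,3}(p)$ at $\omega=p$ to a double zero; the total degree $2$ of $\wp_{1,3}$ then guarantees there is no further pole elsewhere in the parallelogram. Everything else is a clean application of Liouville's theorem.
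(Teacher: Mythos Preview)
Your argument is correct and complete in spirit, but the route differs from the paper's. The paper invokes the structural theorem that any even elliptic function with periods $\omega_1,\omega_3$ having $2q$ poles in a fundamental parallelogram is a rational function of degree $q$ in $\wp_{1,3}$; with $q=1$ this immediately yields that $\overline{w}$ is a fractional linear transformation of $\wp_{1,3}$, and the pole location then pins down the form. Your approach instead works by hand: you subtract an explicit candidate to kill the poles and apply Liouville's theorem directly. The paper's proof is shorter and more conceptual (one sentence of theory does all the work), while yours is more self-contained, requiring no black box beyond Liouville's theorem itself and making the pole bookkeeping visible.

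One small point you leave implicit in the half-period case: when $p$ is a nonzero half-period, matching only the $(\omega-p)^{-2}$ coefficient suffices because both $\overline{w}$ and $1/[\wp_{1,3}(\omega)-\wp_{1,3}(p)]$ are in fact even \emph{around $p$} (evenness at $0$ plus periodicity gives $f(p+\omega)=f(p-\omega)$), so their $(\omega-p)^{-1}$ coefficients vanish automatically. You hint at this with ``the only delicate step,'' but it is worth stating explicitly, since otherwise a reader may wonder why one parameter $\beta$ is enough to cancel a double pole.
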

\noindent{\textit{Proof.}}
It is a well-known fact, see, \textit{e.g.}, \cite[Theorem 3.11.1 and Theorem 3.13.1]{JS}, that an
even elliptic function with periods $\omega_{1},\omega_{3}$ having $2q$ poles in a
period parallelogram is necessarily a rational function of order $q$ of $\wp_{1,3}$.
In our case, $\overline{w}$, which has exactly two poles of order one or one pole of order two
in $[-\omega_{3}/2,\omega_{3}/2]\times [-\omega_{1}/(2\imath),\omega_{1}/(2\imath)]$, is thus
a fractional linear transformation of $\wp_{1,3}$.
\begin{itemize}
\item[---] In particular, it is immediate that $p=0$ yields $\overline{w}(\omega)=\alpha + \beta \wp_{1,3}(\omega)$.
\item[---] If $p\neq 0$ then $\wp_{1,3}(p)\neq \infty$, and we get $\overline{w}(\omega)
=[\alpha \wp_{1,3}(\omega)+\gamma]/[\wp_{1,3}(\omega)-\wp_{1,3}(p)]$.\hfill $\square$
\end{itemize}

Since by \eqref{uniformization} we have $x(\omega)=f^{-1}(\wp(\omega))$ with
$f$ defined in \eqref{def_f},
applying Lemma \ref{lemma_properties_Weierstrass} for $p=0$ implies the following theorem.


\begin{thm}
\label{ann}The functions
     \begin{equation}
     \label{expression_CGFx}
          w(t)=\wp_{1,3}(\wp^{-1}(f(t))-[\omega_{1}+\omega_{2}]/2),
          \ \ \ \ \ \widetilde{w}(t)=w(X_0(t)),
     \end{equation}
are suitable CGFs for the sets $X([y_{1},y_{2}])$ and $Y([x_{1},x_{2}])$, respectively.
\end{thm}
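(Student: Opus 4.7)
The idea is to trace back the chain of reductions performed in the paragraphs preceding the theorem, and then invoke Lemma \ref{lemma_properties_Weierstrass} in its simplest instance. Concretely, the search for a CGF $w$ on $\mathscr{G}X([y_1,y_2])$ was shown to be equivalent, via $\widehat{w}=w\circ x$, to a problem for $\widehat{w}$ on a strip of the uniformization plane, and then, via $\overline{w}(\omega)=\widehat{w}([\omega_1+\omega_2]/2+\omega)$, to the problem of producing an even elliptic function with periods $\omega_1,\omega_3$ having exactly one pair of simple poles (or a single double pole) at $\pm p$ in a fundamental parallelogram. I would take the simplest admissible choice, namely $p=0$ with $(\alpha,\beta)=(0,1)$ in Lemma \ref{lemma_properties_Weierstrass}, giving
\[
\overline{w}(\omega)=\wp_{1,3}(\omega).
\]
This function is automatically even, doubly periodic with periods $\omega_1,\omega_3$, and has a unique double pole at $\omega=0$ in the chosen parallelogram, so conditions \ref{fip}--\ref{twp} are satisfied.

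Next I would unwind the substitutions. From $\widehat{w}(\omega)=\overline{w}(\omega-[\omega_1+\omega_2]/2)=\wp_{1,3}(\omega-[\omega_1+\omega_2]/2)$ and the relation $\widehat{w}=w\circ x$, together with the uniformization $x(\omega)=f^{-1}(\wp(\omega))$ from \eqref{uniformization} and \eqref{def_f}, one reads off
\[
w(t)=\wp_{1,3}\bigl(\wp^{-1}(f(t))-[\omega_1+\omega_2]/2\bigr),
\]
which is the expression in \eqref{expression_CGFx}. A short verification would then check that this formula is unambiguous: although $\wp^{-1}$ is two-valued, the two preimages differ by a sign in $\mathbb{C}/\Omega$, and the evenness of $\overline{w}$ combined with the $\omega_1$-periodicity of $\wp_{1,3}$ ensures the right-hand side does not depend on the choice. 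The three defining properties of a CGF (Definition \ref{def_CGF}) for $\mathscr{G}X([y_1,y_2])$ then reduce to facts already established: meromorphy on $\mathscr{G}X([y_1,y_2])$ and conformality onto $\mathbb{C}$ cut along the image of $X([y_1,y_2])$ follow because $\overline{w}$ has degree one as a map $\mathbb{C}/(\omega_1\mathbb{Z}+\omega_3\mathbb{Z})\to\mathbb{C}\cup\{\infty\}$, and the gluing condition $w(t)=w(\overline{t})$ on $X([y_1,y_2])$ is precisely property (i) in the reformulation preceding Lemma \ref{lemma_properties_Weierstrass}.

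For $\widetilde{w}(t)=w(X_0(t))$, I would use Lemma \ref{Delta_connected}\,\ref{lemma_non_singular_reciprocal}, which says $X_0$ conformally maps $\mathscr{G}Y([x_1,x_2])\setminus[y_1,y_2]$ onto $\mathscr{G}X([y_1,y_2])\setminus[x_1,x_2]$. Composing with $w$ therefore produces a meromorphic function on $\mathscr{G}Y([x_1,x_2])$ realising a conformal map of that set onto a slit plane. The gluing property is automatic: for $t\in Y([x_1,x_2])$, we have $X_0(\overline{t})=\overline{X_0(t)}$ (conjugation symmetry of the algebraic function $X_0$ on a real-symmetric curve), hence
\[
\widetilde{w}(\overline{t})=w(X_0(\overline{t}))=w(\overline{X_0(t)})=w(X_0(t))=\widetilde{w}(t),
\]
using the already-established gluing property of $w$ on $X([y_1,y_2])$.

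The main obstacle, and where the argument needs care, is the well-definedness and single-valuedness of $w(t)$ as given by the explicit formula \eqref{expression_CGFx}, because $\wp^{-1}$ is a priori multivalued. The cleanest way to handle this is not to manipulate $\wp^{-1}$ directly but to define $w$ on $\mathscr{G}X([y_1,y_2])$ via the uniformization: lift $t$ to $\omega\in x^{-1}(\mathscr{G}X([y_1,y_2]))$ and set $w(t)=\wp_{1,3}(\omega-[\omega_1+\omega_2]/2)$; invariance under the covering group of $x$ (generated by $\omega\mapsto-\omega$ modulo $\Omega$, together with the lattice) is then a direct consequence of the evenness of $\overline{w}$ and its $\omega_3$-periodicity built in through \ref{fip}--\ref{twp}. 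Once this is in place, the explicit expression in \eqref{expression_CGFx} is merely a compact rewriting.
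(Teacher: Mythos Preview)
Your approach is essentially the paper's: take $p=0$ in Lemma \ref{lemma_properties_Weierstrass}, obtain $\overline{w}=\wp_{1,3}$, and unwind the substitutions using $x(\omega)=f^{-1}(\wp(\omega))$. The extra care you take with the two-valuedness of $\wp^{-1}$ is a welcome addition that the paper leaves implicit.

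There is, however, a slip in your verification of the gluing property for $\widetilde{w}$. You write that for $t\in Y([x_1,x_2])$ one has $X_0(t)\in X([y_1,y_2])$ and then invoke the gluing property of $w$ on that curve. But this is not where $X_0(t)$ lands. By Lemma \ref{Delta_connected}\,\ref{lemma_non_singular_reciprocal}, $X_0$ sends the \emph{slit} $[y_1,y_2]$ to the curve $X([y_1,y_2])$ (indeed $X([y_1,y_2])$ is by definition the image of $[y_1,y_2]$ under $X$), while the outer boundary $Y([x_1,x_2])$ is sent to the slit $[x_1,x_2]$. So for $t\in Y([x_1,x_2])$ you have $X_0(t)\in[x_1,x_2]\subset\mathbb{R}$, and the gluing property of $w$ on $X([y_1,y_2])$ is simply not applicable there. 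The conclusion $\widetilde{w}(t)=\widetilde{w}(\overline{t})$ is nonetheless correct, for a more elementary reason: since $X_0(t)$ is real, $X_0(\overline{t})=\overline{X_0(t)}=X_0(t)$ as complex numbers, and $w$ is single-valued on $[x_1,x_2]\subset\mathscr{G}X([y_1,y_2])$, whence $w(X_0(t))=w(X_0(\overline{t}))$ trivially. Alternatively, and this is closer to what the paper does in the Remark following Theorem \ref{ann}, you can bypass the composition argument entirely: run the identical reduction with $\widehat{\widetilde{w}}=\widetilde{w}\circ y$ on the same strip $[\omega_2/2,(\omega_2+\omega_3)/2]\times\mathbb{R}$, obtain $\widetilde{w}(t)=\wp_{1,3}(y^{-1}(t)-[\omega_1+\omega_2]/2)$, and then observe from \eqref{uniformization} that $x(\omega)=X_0(y(\omega))$, which gives $\widetilde{w}=w\circ X_0$ directly.
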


\begin{rem}
{\rm By \eqref{uniformization} we have $x(\omega)=X_{0}(y(\omega))$
so that, of course,
$\widetilde{w}(t)=\wp_{1,3}(y^{-1}(t)-[\omega_{1}+\omega_{2}]/2)$.}
\end{rem}

The CGF $w$ is therefore equal to the Weierstrass elliptic function with
periods $\omega_{1},\omega_{3}$ evaluated at a translation of the reciprocal
of the Weierstrass elliptic function with periods $\omega_{1},\omega_{2}$.
It turns out that the theory of transformation of elliptic functions---the
basic result of which being here recalled in \eqref{principle_transformation}
below---entails that this expression admits a wonderful simplification if
$\omega_{3}/\omega_{2}$ is rational.

But Proposition \ref{general_lemma_omega_2_3_finite} shows that the latter condition is
related to the group $W=\langle \Psi,\Phi\rangle$ defined in \eqref{def_generators_group},
since it states that $\omega_{3}/\omega_{2}$ is rational for all $z\in]0,1/k[$ if and
only if the group $W$ is finite. Consequently, we consider separately, in 
Subsections \ref{CGF_infinite} and \ref{CGF_finite} below, the study of $w$ according to the
finiteness of this group. It is worth noting that, to the best of our knowledge, these 
upcoming results on an in-depth study of the CGFs are new, even for $z=1/k$.

\begin{rem}
{\rm Given a CGF $u$ for a generic set $\mathscr{C}$, any of its fractional
linear transformations $[\alpha u+\beta]/[\gamma u+\delta]$ with $\alpha,\beta,\gamma,
\delta \in \mathbb{C}$ such that $\alpha,\gamma\neq 0$ is also a CGF for $\mathscr{C}$,
see Definition \ref{def_CGF}\footnote{Incidentally, we note that the quantity
$\partial_t u(t)/[u(t)-u(x)]$ appearing in Theorem \ref{explicit_integral} is
invariant through the transformation $u\mapsto [\alpha u+\beta]/[\gamma u+\delta]$,
as soon as $\alpha,\gamma\neq 0$.}. They are actually the only ones, see \cite[Chapter 2]{LIT}
for a proof using Fredholm operators.
Applying Lemma \ref{lemma_properties_Weierstrass} to any value
of $p$ enables us to recover this in an elementary way. This has also two interesting
consequences. First, this means that Theorem \ref{ann} gives the explicit expression
of \emph{all} possible CGFs. Also, this implies that Theorem \ref{nature_CGF}
can be extended to \emph{any} CGFs; in other words, the nature of the CGFs is
intrinsic, in the sense that it only depends on the kernel.}
\end{rem}

\begin{rem}
{\rm We recall from \cite[Section 4]{KR} the following global
properties of the CGFs.
The function $w$ (resp.\ $\widetilde{w}$)
defined by \eqref{expression_CGFx} is meromorphic on $\mathbb{C}\setminus [x_{3},x_{4}]$
(resp.\ $\mathbb{C}\setminus [y_{3},y_{4}]$). It has therein one simple pole, at $x_{2}$
(resp.\ $Y(x_{2})$), and $\lfloor\omega_{2}/(2\omega_{3})\rfloor$\footnote{For $r\in\mathbb{R}$, $\lfloor r\rfloor$ denotes the
lower integer part of $r$, \textit{i.e.}, the unique $p\in\mathbb{Z}$
such that $p\leq r<p+1$.} double
poles at some points lying on the segment $]x_{2},x_{3}[\cap (\mathbb{C}\setminus \mathscr{G}
X([y_{1},y_{2}]))$ (resp. $]y_{2},y_{3}[\cap (\mathbb{C}\setminus \mathscr{G}Y([x_{1},
x_{2}]))$).}
\end{rem}

Before concluding this part, we state the following properties of the $\wp$-Weierstrass function, which
will be of the highest significance for the proof of Theorem \ref{big_theo}.

\begin{lem}
The following results on the $\wp$-Weierstrass elliptic function hold:
\begin{itemize}
\item[---] Let ${\wp}$ be a Weierstrass elliptic function with certain periods
$\overline{\omega},\widehat{\omega}$, and let $p$ be some positive integer.
The Weierstrass elliptic function with periods $\overline{\omega},\widehat{\omega}
/p$ can be written in terms of ${\wp}$ as
     \begin{equation}
     \label{principle_transformation}
          {\wp}(\omega)+\sum_{k=1}^{p-1}[{\wp}(\omega+
          k\widehat{\omega}/p)-{\wp}(k\widehat{\omega}/p)].
     \end{equation}
\item[---] Let ${\wp}$ be a Weierstrass elliptic function. We have
           the addition theorem: 
     \begin{equation}
     \label{addition_theorem}
          \forall \omega,\widetilde{\omega},
          \ \ \ \ \ \wp({\omega}+\widetilde{\omega})=-\wp({\omega})
          -\wp(\widetilde{\omega})+\frac{1}{4}\left[\frac{\wp'({\omega})-
          \wp'(\widetilde{\omega})}{\wp({\omega})-
          \wp(\widetilde{\omega})}\right]^{2}.
     \end{equation}
\item[---] For any integer $r\geq 0$, there exists a polynomial $p_{r}(x)$,
with dominant term equal to $(2r+1)!\,x^{r+1}$, such that
     \begin{equation}
     \label{diff_n_wp}
          \wp^{(2r)}=p_{r}(\wp).
     \end{equation}
\end{itemize}
\end{lem}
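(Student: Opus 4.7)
All three statements are classical facts about the Weierstrass $\wp$-function; my plan is to handle them separately, exploiting the usual uniqueness-from-principal-part arguments for elliptic functions and the three-zeros argument for $\wp'-A\wp-B$.

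For the transformation formula \eqref{principle_transformation}, let $F(\omega)$ denote the right-hand side; the aim is to identify it with the $\wp$-function of periods $\overline{\omega}, \widehat{\omega}/p$. The points to check are: $F$ is even (using $\wp(-\omega)=\wp(\omega)$ together with the symmetry $k\leftrightarrow p-k$ of the index set and the $\widehat{\omega}$-periodicity of $\wp$), it is $\overline{\omega}$-periodic term by term, and it is $(\widehat{\omega}/p)$-periodic by reindexing $k\mapsto k+1$ combined with the $\widehat{\omega}$-periodicity of $\wp$. Near the origin, only the $k=0$ summand is singular; each analytic summand $\wp(\omega+k\widehat{\omega}/p)-\wp(k\widehat{\omega}/p)$ has vanishing constant term by construction, so $F(\omega)=1/\omega^{2}+O(\omega^{2})$ at the origin. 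By Liouville's theorem the difference between $F$ and the $\wp$-function with periods $\overline{\omega}, \widehat{\omega}/p$ is a bounded entire function with zero constant term, hence identically zero.

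For the addition theorem \eqref{addition_theorem}, I would use the textbook three-zeros argument. Given $\omega,\widetilde{\omega}$, one solves for $A,B$ so that $g(u):=\wp'(u)-A\wp(u)-B$ vanishes at both $\omega$ and $\widetilde{\omega}$; this forces $A=[\wp'(\omega)-\wp'(\widetilde{\omega})]/[\wp(\omega)-\wp(\widetilde{\omega})]$. Since $g$ is elliptic with a single pole of order $3$ at the origin, it has exactly three zeros whose sum is $\equiv 0$ modulo the lattice, so the third zero is $-\omega-\widetilde{\omega}$. Squaring the identity $\wp'(u)=A\wp(u)+B$ at each zero and using $(\wp')^{2}=4\wp^{3}-g_{2}\wp-g_{3}$, the values $\wp(\omega),\wp(\widetilde{\omega}),\wp(\omega+\widetilde{\omega})$ are the three roots of the cubic $4X^{3}-A^{2}X^{2}-(g_{2}+2AB)X-(g_{3}+B^{2})=0$. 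Vieta's formula for the sum of the roots then reads $\wp(\omega)+\wp(\widetilde{\omega})+\wp(\omega+\widetilde{\omega})=A^{2}/4$, which, after substituting the expression for $A$, is exactly \eqref{addition_theorem}.

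For the polynomial identity \eqref{diff_n_wp}, I would induct on $r$. The base case $r=0$ is $\wp=p_{0}(\wp)$ with $p_{0}(x)=x$, matching $1!\,x$. Differentiating the ODE $(\wp')^{2}=4\wp^{3}-g_{2}\wp-g_{3}$ yields $\wp''=6\wp^{2}-g_{2}/2$. For the induction step, $\wp^{(2r+2)}=(p_{r}(\wp))''=p_{r}''(\wp)(\wp')^{2}+p_{r}'(\wp)\wp''$, and substituting $(\wp')^{2}=4\wp^{3}-g_{2}\wp-g_{3}$ together with $\wp''=6\wp^{2}-g_{2}/2$ eliminates $\wp'$ and leaves a polynomial in $\wp$. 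The only step that requires care is the leading-coefficient bookkeeping: from the inductive leading term $(2r+1)!\,x^{r+1}$ of $p_{r}$, the contributions $4\wp^{3}\cdot p_{r}''(\wp)$ and $6\wp^{2}\cdot p_{r}'(\wp)$ combine to $[4r(r+1)+6(r+1)](2r+1)!\,\wp^{r+2}=(2r+3)(2r+2)(2r+1)!\,\wp^{r+2}=(2r+3)!\,\wp^{(r+1)+1}$, which closes the induction. This arithmetic is the main (and only) concrete obstacle in the proof.
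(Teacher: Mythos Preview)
Your proposal is correct. The paper's own proof is essentially a citation: it refers to \cite{WW} for \eqref{principle_transformation}, to \cite{JS,WW} for \eqref{addition_theorem}, and for \eqref{diff_n_wp} it only writes ``differentiating \eqref{diff_eq_wp} gives $\wp''=6\wp^{2}-g_{2}/2$, and higher derivatives of even order are clearly expressible as in \eqref{diff_n_wp}.'' Your arguments are the standard ones underlying those references, so the approach is the same; you simply supply the details---in particular the parity/periodicity/Liouville check for \eqref{principle_transformation}, the three-zeros/Vieta derivation for \eqref{addition_theorem}, and the explicit leading-coefficient bookkeeping $(4r(r+1)+6(r+1))(2r+1)!=(2r+3)!$ that the paper leaves implicit.
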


\begin{proof}
Equality \eqref{principle_transformation} is shown, \textit{e.g.}, in \cite[page 456]{WW}.
Identity \eqref{addition_theorem} is most classical, and can be found in \cite{JS,WW}.
As for Equation \eqref{diff_n_wp}, differentiating \eqref{diff_eq_wp} gives
$\wp''=6\wp^{2}-g_2/2$ (with obvious notations), and higher derivatives of even order are clearly expressible
as in \eqref{diff_n_wp}.
\end{proof}

\subsection{Case of an infinite group}
\label{CGF_infinite}

In this subsection we concentrate on the case of an \emph{infinite}
group of the walk, and we show the part of Theorem \ref{nature_CGF}
concerning the infinite group case. Precisely, we prove that for any
$z\in]0,1/k[$ such that $\omega_{3}/\omega_{2}$ is irrational, then the
CGFs $w$ and $\widetilde{w}$ defined in \eqref{expression_CGFx} are non-holonomic. 
Before starting the proof, we notice that for a
walk admitting an infinite group, it may happen that
$\omega_{3}/\omega_{2}$ is rational for certain values of $z$, but
$\omega_{3}/\omega_{2}$ also has to take irrational values. Indeed,
if $\omega_{3}/\omega_{2}$ was rational for all $z\in]0,1/k[$, then
$\omega_{3}/\omega_{2}$ would be a rational constant, since with
\eqref{def_omega_1_2} and \eqref{def_omega_3}, the quantity
$\omega_{3}/\omega_{2}$ is holomorphic of $z$, and
Proposition \ref{general_lemma_omega_2_3_finite} would then entail
that the group is finite.

\begin{proof}
Let
     \begin{equation*}
          v(t)=w(f^{-1}(t))=\wp_{1,3}(\wp^{-1}(t)-[\omega_{1}+\omega_{2}]/2).
     \end{equation*}
The class of holonomic functions being closed under algebraic substitutions,
see, \textit{e.g.}, \cite{FLAJ}, it is enough to prove that $v$
is non-holonomic. Indeed, on the one hand $f$ is rational, and on the
other hand $\widetilde{w}=w(X_{0})$, where $X_{0}$ is algebraic. We are going
to show first that $v$ is non-algebraic, and then we will prove that if  $v$ is
holonomic then it has to be algebraic, in such a way that $v$ will be non-holonomic.

\medskip

Suppose thus that $v$ is algebraic. In other words, we assume that there exist polynomials
$a_{0},\ldots ,a_{q}$ with $a_{q}\neq 0$, such that $\sum_{k=0}^{q}a_{k}(t) v(t)^{k}=0$.
By definition of $v$ and since the Weierstrass elliptic function is non-algebraic, at
least one of $a_{0},\ldots ,a_{q}$ is non-constant.
Evaluating the last equality at $t=\wp(\omega+[\omega_{1}+\omega_{2}]/2)$ and
using the definition of $v$, we get $\sum_{k=0}^{q}a_{k}(\wp(\omega+[\omega_{1}+
\omega_{2}]/2))\wp_{1,3}(\omega)^{k}=0$. Since $\wp(\omega+[\omega_{1}+\omega_{2}]
/2)$ is a rational transformation of $\wp(\omega)$, see  the addition
theorem \eqref{addition_theorem}, the previous equality yields the identity $P(\wp(\omega),
\wp_{1,3}(\omega))=0$, where $P$ is a certain polynomial, which is non-constant with
respect to the two variables.

Now we recall \cite{JS,WW}  that $\wp$ (resp.\ $\wp_{1,3}$) has poles at every point
of the lattice $\omega_{1}\mathbb{Z}+\omega_{2}\mathbb{Z}$ (resp.\ $\omega_{1}\mathbb{Z}
+\omega_{3}\mathbb{Z}$). Moreover, it is well known that $\omega_{2}\mathbb{Z}+\omega_{3}\mathbb{Z}$ 
is dense in $\mathbb{R}$ if $\omega_{3}/\omega_{2}$ is
irrational. In
particular, if $\omega_{3}/\omega_{2}$ is irrational then the poles of $P(\wp(\omega),
\wp_{1,3}(\omega))$ are not isolated, which contradicts the principle
of analytic continuation.

\medskip

Suppose now that $v$ is holonomic, \textit{i.e.}, that there exist polynomials
$a_{0},\ldots ,a_{q}$ with $a_{q}\neq 0$ such that $\sum_{k=0}^{q}a_{k}(t)v^{(k)}
(t)=0$. We show that, in this case,
     \begin{equation}
     \label{gen_met}
          \sum_{k=0}^{q}a_{k}(t) v^{(k)}(t)=U(t,v(t))+V(t,v(t))v'(t),
     \end{equation}
where the dependence of $U,V$ with respect to the first (resp.\ second) variable is
rational (resp.\ polynomial), and where at least one of $U,V$ is non-zero. These facts will
entail the algebraicity of $v$. Indeed:
\begin{itemize}
\item[---] If $V$ is identically zero, then $U$ has to be non-zero.
Moreover, since $U$ is rational with respect to the first variable and since with \eqref{gen_met} we
have $U(t,v(t))=0$, $U$ has to be non-constant with respect to the second variable,
and \eqref{gen_met} immediately yields that $v$ is then algebraic.
\item[---] Suppose now that $V$ is not identically zero. From \eqref{gen_met} and the
supposed holonomicity of $v$ it follows that $U(t,v(t))^{2}-V(t,v(t))^{2}v'(t)^{2}=0$.
Also, noting $g$ the derivative of ${\wp^{-1}}$ (if $g_{2}$ and $g_{3}$
are the invariants\footnote{They are defined by $g_2=-4[\wp(\omega_1/2)\wp([\omega_1+\omega_2]/2)+\wp(\omega_1/2)\wp(\omega_2/2)+\wp([\omega_1+\omega_2]/2)\wp(\omega_2/2)]$ and $g_3=4\wp(\omega_1/2)\wp([\omega_1+\omega_2]/2)\wp(\omega_2/2)$.} of $\wp$, we have $g(t)=1/[4t^{3}-g_{2}t-g_{3}]^{1/2}$), we
obtain $v'(t)^{2}=g(t)^{2}\wp_{1,3}'(\wp^{-1}(t)-[\omega_{1}+\omega_{2}]/2)^{2}$. \vspace{-0.4mm}
But ${\wp_{1,3}'^{2}}=4\wp_{1,3}^{3}-g_{2,1,3}\wp_{1,3}-g_{3,1,3}$,
where $g_{2,1,3}$ and $g_{3,1,3}$ are the invariants of $\wp_{1,3}$.
Finally, we have that
$U(t,v(t))^{2}-V(t,v(t))^{2}g(t)^{2}[4v(t)^{3}-g_{2,1,3}v(t)-g_{3,1,3}]=0$. This
last quantity is a rational function of the first variable, since $g(t)^{2}$ is rational,
and an odd-degree polynomial in the second one, in such a way that $v$ is algebraic.
\end{itemize}
So, it is definitely enough to prove \eqref{gen_met}. For this we are going to show that for any $k\geq 0$,
     \begin{equation}
     \label{par_met}
          v^{(k)}(t)=U_{k}(t,v(t))+V_{k}(t,v(t))v'(t),
     \end{equation}
where the dependence of $U_{k}$ (resp.\ $V_{k}$) is rational with respect to the first variable and
polynomial of degree exactly $\lfloor k/2+1\rfloor$ (resp.\ $\lfloor (k-1)/2\rfloor$)
with respect to the second one.
Equality \eqref{gen_met} will then be an immediate consequence of \eqref{par_met}.
Indeed, if $q$ is even then by using \eqref{par_met} in \eqref{gen_met} we obtain
that the degree of $U$ in $v$ is exactly $\lfloor q/2+1\rfloor$ and $U$ is thus
obviously non-zero. Likewise, if $q$ is odd we get that the degree of $V$ is
exactly $\lfloor (q-1)/2\rfloor$ and is thus clearly non-zero if $q\geq 3$.
If $q=1$ then we easily make explicit $V_{0}$ ($=0$) and $V_{1}$, and we immediately
deduce that $V$ is also non-zero.

We now show \eqref{par_met}. For $k=0$, this is obvious. For $k\geq 1$,
a straightforward calculation leads to
     \begin{equation*}
          v^{(k)}(t)=\sum_{p=1}^{k}b_{p}(t)\wp_{1,3}^{(p)}(\wp^{-1}(t)-[\omega_{1}+\omega_{2}]/2),
     \end{equation*}
with $b_{k}=g(t)^{k}$, $b_{k-1}(t)=[k(k-1)/2]g'(t)g(t)^{k-2}$, and for $p\in\{2,\ldots ,k-1\}$,
$b_{k-p}$ is a polynomial in the variables $g(t),g'(t),\ldots ,g^{(p)}(t)$. Moreover, if $p$
is even then $b_{p}$ clearly is rational, whereas if $p$ is odd then $b_{p}/g$ is rational. Next,
by a repeated use of \eqref{diff_n_wp} and by using that $\wp_{1,3}'(\wp^{-1}(t)-[
\omega_{1}+\omega_{2}]/2)=v'(t)/g(t)$, we obtain
     \begin{equation*}
          v^{(k)}(t)=\sum_{\substack{p=1 \\p\ \text{even}}}^k b_{p}(t)p_{p/2}(v(t))+
          v'(t)\sum_{\substack{p=1 \\p\ \text{odd}}}^k [b_{p}(t)/g(t)]p_{(p-1)/2}'(v(t)).
     \end{equation*}
With the values of $b_{k}$ and $b_{k-1}$ given above this immediately yields
\eqref{par_met}, and therefore also \eqref{gen_met}, and finally the fact that
$v$, $w$ and $\widetilde{w}$ are non-holonomic.
\end{proof}

\subsection{Case of a finite group}
\label{CGF_finite}

In this subsection, we consider the case of a \emph{finite} group. We
show that $w$ and $\widetilde{w}$ are then algebraic, and we complete the proof
of Theorem \ref{nature_CGF}. Moreover, we  considerably simplify the explicit
expressions of $w$ and $\widetilde{w}$ given in \eqref{expression_CGFx} for 22 of the 23 walks
having a finite group\footnote{The 23rd model---namely, Gessel's walk---has
already been treated in \cite{KRG}.}: see Theorem \ref{big_theo} \ref{prop_CGF_order_4} 
(resp. \ref{prop_CGF_order_6_-} and \ref{prop_CGF_order_6_+},
\ref{prop_CGF_order_8}) for the walks admitting a group of order $4$ (resp.\ $6$, $8$).
 Thanks to Proposition \ref{general_lemma_omega_2_3_finite}, we obtain that for the $23$ walks having a finite group,
the quantity $\omega_{2}/\omega_{3}$ is rational. Moreover, with \cite{BMM} and
Proposition \ref{lemma_omega_2_3_finite}, it is sufficient to consider the cases where
$\omega_{2}/\omega_{3}$ are equal to $2,3,3/2,4,4/3$. According to the classification
of \cite{BMM}---recalled here in Figure \ref{Tables}---there are $16,2,3,1,1$ such walks, respectively.


\paragraph{Finite group of the walk and negative or zero covariance.}
We first consider the case of a
covariance equal to zero---or equivalently the case $\omega_{2}/\omega_{3}=2$.

\begin{proof}[Proof of Theorem {\rm\ref{big_theo} \ref{prop_CGF_order_4}}.]
Actually, it suffices to show
the following identity (specific to the case $\omega_{2}/\omega_{3}=2$):
     \begin{equation}
     \label{func_eq_wp_4}
          \wp_{1,3}(\omega-[\omega_{1}+\omega_{2}]/2)=
          -2f(x_{1})+[f(x_{2})-f(x_{3})]^{2}
          \frac{\wp(\omega)-f(x_{1})}{[\wp(\omega)-f(x_{2})]
          [\wp(\omega)-f(x_{3})]}.
     \end{equation}
Indeed, if Equation \eqref{func_eq_wp_4} holds, let us evaluate it at $\omega=\wp^{-1}(f(t))$:
with \eqref{expression_CGFx} we obtain that there exist two constants $K_{1}$
and $K_{2}$ such that $w(t)=K_{1}+K_{2}[f(t)-f(x_{1})]/[(f(t)-f(x_{2}))(f(t)-
f(x_{3}))]$. But by using the explicit expression \eqref{def_f} of $f$, it is immediate that
if $x_{4}\neq \infty$ then $w(t)=K_{1}+K_{3}[(t-x_{1})(t-x_{4})]/[(t-x_{2})(t-
x_{3})]$, and if $x_{4}=\infty$ then $w(t)=K_{1}+K_{4}[t-x_{1}]/[(t-x_{2})(t-x_{
3})]$, $K_{3}$ and $K_{4}$ being some non-zero constants.

To prove \eqref{func_eq_wp_4}, start by applying \eqref{principle_transformation} for $p=2$: we obtain
$\wp_{1,3}(\omega-[\omega_{1}+\omega_{2}]/2)=\wp(\omega-[\omega_{1}+\omega_{2}]/2)+
\wp(\omega-\omega_{1}/2)-\wp(\omega_{2}/2)$. Now, taking the usual notations $e_{1}=
\wp(\omega_{1}/2)$, $e_{1+2}=\wp([\omega_{1}+\omega_{2}]/2)$ and $e_{2}=\wp(\omega_{2}/2)$,
we can state the two following particular cases of the addition formula \eqref{addition_theorem}:
$\wp(\omega-[\omega_{1}+\omega_{2}]/2)=e_{1+2}+[(e_{1+2}-e_{1})(e_{1+2}-e_{2})]/[\wp(\omega)-e_{1+2}]$
and $\wp(\omega-\omega_{1}/2)=e_{1}+[(e_{1}-e_{2})(e_{1}-e_{1+2})]/[\wp(\omega)-e_{1}]$.
In this way and after simplification we obtain $\wp_{1,3}(\omega-[\omega_{1}+\omega_{2}]/2)=
e_{1}-e_{2}+e_{1+2}+(e_{1}-e_{1+2})^{2}[\wp(\omega)-e_{2}]/[(\wp(\omega)-e_{1})(\wp(\omega)-e_{1+2})]$.
Finally, by using the equalities $e_{1}=f(x_{3})$, $e_{1+2}=f(x_{2})$ and $e_{2}=f(x_{1})$,
see below \eqref{def_f}, as well as $e_{1}+e_{1+2}+e_{2}=0$, we immediately obtain \eqref{func_eq_wp_4}.
\end{proof}

Let us now consider the case of a negative covariance---or
equivalently $\omega_{2}/\omega_{3}$ equal to $3$ or $4$.
We start with the situation where $\omega_{2}/\omega_{3}=3$.

\begin{proof}[Proof of Theorem {\rm\ref{big_theo} \ref{prop_CGF_order_6_-}}.] First of all, let us give a sketch of the
proof. We shall first show the existence and find the expression of two third-degree polynomials
$A$ and $B$ such that $\wp_{1,3}(\omega-[\omega_{1}+\omega_{2}]/2)=A(\wp(\omega))/B(\wp(\omega))$.
In particular, with \eqref{expression_CGFx}, we shall then obtain that $w(t)=A(f(t))/B(f(t))$. Next
we shall consider the two walks $\{{\sf N},{\sf SE},{\sf W}\}$ and 
$\{{\sf N},{\sf E},{\sf SE},{\sf S},{\sf W},{\sf NW}\}$ separately.
\begin{itemize}
  \item[---]  For the walk $\{{\sf N},{\sf SE},{\sf W}\}$, we have
        $x_{4}=\infty$ and then $A(f(t))$ and $B(f(t))$ are also third-degree polynomials,
        see the expression \eqref{def_f} of $f$. We will show that $B(f(t))
        =$ $(t-x_{2})(t-1/{x_{2}}^{1/2})^{2}$. In addition, if $r(t)$ denotes the rest of the
        euclidean division of $A(f(t))$ by $B(f(t))$, we will prove that $r(t)=(r''(0)/2)
        t^{2}$ with $r''(0)\neq 0$, in such a way that $w(t)=B(f(\infty))/A(f(\infty))+
        (r''(0)/2)t^{2}/[(t-x_{2})(t-1/{x_{2}}^{1/2})^{2}]$.
  \item[---] For the walk $\{{\sf N},{\sf E},{\sf SE},{\sf S},{\sf W},{\sf NW}\}$, we have
        $x_{4}\neq \infty$ and then $(t-x_{4})^{3}A(f(t))$ and $(t-x_{4})^{3}B(f(t))$ are
        third-degree polynomials. We will show that $(t-x_{4})^{3}B(f(t))=(t-x_{2})(t-1/
        {x_{2}}^{1/2})^{2}$. If we denote by $r(t)$  the rest of the euclidean
        division of $(t-x_{4})^{3}A(f(t))$ by $(t-x_{4})^{3}B(f(t))$, then we will prove
        that $r(t)=(r''(0)/2)t(t+1)$ with $r''(0)\neq 0$ so that $w(t)=B(f(\infty))/
        A(f(\infty))+(r''(0)/2)t(t+1)/[(t-x_{2})(t-1/{x_{2}}^{1/2})^{2}]$.
\end{itemize}
Finally, with the explicit formulation of $w$ and the fact
that $\widetilde{w}=w(X_{0})$,  an elementary calculation will lead, for each walk,
to the expression of $\widetilde{w}$ stated in Theorem \ref{big_theo} \ref{prop_CGF_order_6_-};
this will conclude the proof.

\medskip

So we begin by finding explicitly, for both $\{{\sf N},{\sf SE},{\sf W}\}$ 
and $\{{\sf N},{\sf E},{\sf SE},{\sf S},{\sf W},{\sf NW}\}$, two
polynomials $A$ and $B$ of degree three such that $\wp_{1,3}(\omega-[\omega_{1}+
\omega_{2}]/2)=A(\wp(\omega))/B(\wp(\omega))$.

Applying \eqref{principle_transformation} with $p=3$, we get
$\wp_{1,3}(\omega-[\omega_{1}+\omega_{2}]/2)=\wp(\omega-[\omega_{1}+
\omega_{2}]/2)+\wp(\omega-\omega_{1}/2-\omega_{2}/6)-\wp(\omega_{2}/3)+
\wp(\omega-\omega_{1}/2+\omega_{2}/6)-\wp(2\omega_{2}/3)$. Then, using
the addition formula \eqref{addition_theorem} for $\wp$,
noting $K=e_{1+2}-2\wp(\omega_{2}/3)-2\wp(\omega_{1}/2+\omega_{2}/6)$,
using that
$\wp(\omega_{1}/2+\omega_{2}/6)=\wp(\omega_{1}/2-\omega_{2}/6)$ and
$\wp'(\omega_{1}/2+\omega_{2}/6)=-\wp'(\omega_{1}/2-\omega_{2}/6)$---got from the fact that $\wp(\omega_{1}/2+\omega)$ is even and
$\wp'(\omega_{1}/2+\omega)$ is odd---we have that $\wp_{1,3}(\omega-[\omega_{1}+\omega_{2}]/2)$ equals
     \begin{equation}
     \label{before_A_and_B}
          \frac{(e_{1+2}-e_{1})(e_{1+2}-e_{2})}
          {\wp(\omega)-e_{1+2}}-2\wp(\omega)+\frac{1}{2}\frac{\wp'(\omega)^{2}+\wp'(\omega_{1}
          /2+\omega_{2}/6)^{2}}{[\wp(\omega)-\wp(\omega_{1}/2+\omega_{2}/6)]^{2}}+K.
     \end{equation}

With Equations \eqref{diff_eq_wp} and \eqref{before_A_and_B}, it is now clear that
$\wp_{1,3}(\omega-[\omega_{1}+\omega_{2}]/2)$ can be written as
$A(\wp(\omega))/B(\wp(\omega))$; moreover we can take $B(\wp(\omega))
=(\wp(\omega)-e_{1+2})(\wp(\omega)-\wp(\omega_{1}/2+\omega_{2}/6))^{2}$.

Now we show that $\wp(\omega_{1}/2+\omega_{2}/6)=f(1/{x_{2}}^{1/2})$.
For this we will express $\wp(\omega_{1}/2+\omega_{2}/6)$ with respect to $z$.
Since $\wp(\omega_{1}/2+\omega_{2}/6)=e_{1}+[(
e_{1}-e_{2})(e_{1}-e_{1+2})]/[\wp(\omega_{2}/6)-e_{1}]$
it is enough to express $\wp(\omega_{2}/6)$ with respect to $z$. For this we will
first find $\wp(\omega_{2}/3)$ explicitly and we will then use, for $\omega=
\omega_{2}/3$, the fact $\wp(\omega/2)$ is equal to 
     \begin{equation*}
          \wp(\omega)+[(\wp(\omega)-e_{1})(\wp(\omega)-e_{2})]^{1/2}
          \hspace{-0.3mm}+[(\wp(\omega)-e_{1})(\wp(\omega)-e_{1+2})]^{1/2}
          \hspace{-0.3mm}+[(\wp(\omega)-e_{2})(\wp(\omega)-e_{1+2})]^{1/2},
     \end{equation*}
see \cite{JS,WW}.
In other words, for all coefficients in \eqref{before_A_and_B}
to be explicit with respect to $z$, it is enough to find
only $\wp(\omega_{2}/3)$ in terms of $z$.

And now we show that for both $\{{\sf N},{\sf SE},{\sf W}\}$ 
and $\{{\sf N},{\sf E},{\sf SE},{\sf S},{\sf W},{\sf NW}\}$, $\wp(\omega_{2}/3)=1/3$. For this
we use the following fact, already recalled in \cite{KRG}: the quantity 
$x=\wp(\omega_{2}/3)$ is the only positive root of 
\begin{equation*}
x^{4}-g_{2}x^{2}/2-g_{3}x-g_{2}^{2}/48, 
\end{equation*}
$g_{2}=-4[e_{1}e_{2}+e_{1}e_{1+2}
+e_{2}e_{1+2}]$ and $g_{3}=4e_{1}e_{2}e_{1+2}$ being the invariants of $\wp$. By using the explicit
expressions of $e_{1}$, $e_{2}$ and $e_{1+2}$---see the proof of Theorem \ref{big_theo} \ref{prop_CGF_order_4}---we easily show that
$1/3$ is a root of the polynomial above; $1/3$ being positive we get $\wp(\omega_{2}/3)=1/3$.

Then an elementary calculation leads to $\wp(\omega_{1}/2+\omega_{2}/6)=f(1/{x_{2}}^{1/2})$.
Next with \eqref{diff_eq_wp} we also obtain $\wp'(\omega_{1}/2+\omega_{2}/6)$ and thus all
coefficients in \eqref{before_A_and_B} are known in terms of $z$. In particular, this is
also the case for the polynomials $A$ and $B$. After a lengthy but easy calculation we
obtain the facts claimed in the items above, and thus
Theorem \ref{big_theo} \ref{prop_CGF_order_6_-}.
\end{proof}

\begin{proof}[Proof of Theorem \ref{big_theo} \ref{prop_CGF_order_8}.]
This case, concerning $\omega_{2}/\omega_{3}=4$, could
be obtained by applying \eqref{principle_transformation} for $p=4$; the details
would be essentially the same as above, so that we omit them.
\end{proof}

\paragraph{Finite group of the walk and positive covariance.}
The only such possible walks are $\{{\sf NE},{\sf S},{\sf W}\}$, $\{{\sf N},{\sf E},{\sf SW}\}$,
$\{{\sf N},{\sf NE},{\sf E},{\sf S},{\sf SW},{\sf W}\}$, as well as Gessel's walk
$\{{\sf E},{\sf SW},{\sf W},{\sf NE}\}$.
The latter has already been considered in \cite{KRG}: there
we have shown that the CGFs $w$ and $\widetilde{w}$ defined by \eqref{expression_CGFx} are
algebraic (of degree three in $t$), and we have found their minimal polynomials.

By using the same key idea as in \cite{KRG}---namely, a double application of
\eqref{principle_transformation}---we are now going to prove Theorem \ref{big_theo}
\ref{prop_CGF_order_6_+}, \textit{i.e.}, to show that for the three walks $\{{\sf NE},{\sf S},{\sf W}\}$, $\{{\sf N},{\sf E},{\sf SW}\}$
and $\{{\sf N},{\sf NE},{\sf E},{\sf S},{\sf SW},{\sf W}\}$,  $w$ and $\widetilde{w}$ are algebraic (of degree two in $t$), and
to find their minimal polynomials.

\medskip

\begin{proof}[Proof of Theorem {\rm\ref{big_theo} \ref{prop_CGF_order_6_+}}.] 
First recall that for the three walks considered here $\omega_{2}/
\omega_{3}=3/2$, and define the auxiliary period $\omega_{4}=\omega_{2}/3$.

First, with $\omega_{4}=\omega_{2}/3$ and \eqref{principle_transformation},
we shall be able to express $\wp_{1,4}$ as a
rational function of $\wp$. Moreover, since $\omega_{4}=\omega_{3}/2$ and once again with
\eqref{principle_transformation}, we shall write $\wp_{1,4}$ as a rational function of $\wp_{1,3}$.
As an immediate consequence, $\wp_{1,3}$ will be an algebraic function of $\wp$. Then, with
\eqref{expression_CGFx} and the addition formula \eqref{addition_theorem}, we shall obtain that
the CGF $w$ defines an algebraic function of $t$.

\medskip

{\noindent \emph{Rational expression of $\wp_{1,4}$ in terms of $\wp$}}.
By using exactly the same arguments as in the proof of Theorem \ref{big_theo} \ref{prop_CGF_order_6_-},
we obtain the following three facts: firstly $\wp_{1,4}(\omega-[\omega_{1}+\omega_{2}]/2)$ is
equal to \eqref{before_A_and_B}; secondly $\wp(\omega_{2}/3)=1/3$; and thirdly the expressions of
all coefficients in \eqref{before_A_and_B} with respect to $z$ are explicit. This way, we conclude that
there exist $K_{1}$ and $K_{2}$ which depend only on $z$---and could be made explicit---such that
     \begin{equation}
     \label{wp_14_wp}
          \wp_{1,4}(x^{-1}(t)-[\omega_{1}+\omega_{2}]/2)=
          K_{1}+\frac{K_{2}u(t)}{[t-x_{2}][t-1/{x_{2}}^{1/2}]^{2}},
     \end{equation}
with $u(t)$ as described in the statement of Theorem \ref{big_theo} \ref{prop_CGF_order_6_+}.

\medskip

{\noindent \emph{Rational expression of $\wp_{1,4}$ in terms of $\wp_{1,3}$}}.
Applying, as in the proof of Theorem \ref{big_theo} \ref{prop_CGF_order_4},
the identity \eqref{principle_transformation} for $p=2$, we obtain that $\wp_{1,4}(\omega)
=\wp_{1,3}(\omega)+\wp_{1,3}(\omega+\omega_{3}/2)-\wp_{1,3}(\omega_{3}/2)$.
Noting then $e_{1,1,3}=\wp_{1,3}(\omega_{1}/2)$, $e_{1+3,1,3}=\wp_{1,3}(
[\omega_{1}+\omega_{3}]/2)$ and $e_{3,1,3}=\wp_{1,3}(\omega_{3}/2)$ we have
$\wp_{1,3}(\omega+\omega_{3}/2)=e_{3,1,3}+[(e_{3,1,3}-e_{1,1,3})(e_{3,1,3}-
e_{1+3,1,3})]/[\wp_{1,3}(\omega)-e_{3,1,3}]$. In particular, we immediately
obtain that
     \begin{equation}
     \label{wp_13_wp_14}
          \wp_{1,3}(\omega)^{2}-[e_{3,1,3}+\wp_{1,4}(\omega)]
          \wp_{1,3}(\omega)+[(e_{3,1,3}-e_{1,1,3})(e_{3,1,3}-
          e_{1+3,1,3})+e_{3,1,3}\wp_{1,4}(\omega)]=0.
     \end{equation}
Therefore, once the expressions of $e_{1,1,3}$, $e_{1+3,1,3}$ and $e_{3,1,3}$
will be known explicitly, Equations \eqref{expression_CGFx}, \eqref{wp_14_wp} and
\eqref{wp_13_wp_14} will immediately entail Theorem \ref{big_theo} \ref{prop_CGF_order_6_+}.

\medskip

It thus remains for us to find explicitly $e_{1,1,3}$, $e_{1+3,1,3}$ and $e_{3,1,3}$.
This will be a consequence of the possibility of expanding $\wp_{1,4}$ in two
different ways.

First, we saw above that $\wp_{1,4}(\omega)=\wp_{1,3}(\omega)+[(e_{3,1,3}-
e_{1,1,3})(e_{3,1,3}-e_{1+3,1,3})]/[\wp_{1,3}(\omega)-e_{3,1,3}]$, so that by using
the expansion of $\wp_{1,3}$ at $0$, namely, $\wp_{1,3}(\omega)=1/\omega^{2}+g_{2,1,3}
\omega^{2}/20+g_{3,1,3}\omega^{4}/28+O(\omega^{6})$, $g_{2,1,3}$ and $g_{3,1,3}$ being
the invariants of $\wp_{1,3}$, as well as the straightforward equality $(e_{3,1,3}-e_{1,1,3})(e_{3,1,3}-e_{1+3,1,3})=3{e_{3,1,3}}^{2}-g_{2,1,3}/4$, we get
     \begin{equation}
     \label{first_expansion_e_e_e}
          \wp_{1,4}(\omega)=1/\omega^{2}+[3{e_{3,1,3}}^{2}-g_{2,1,3}/5]\omega^{2}
          +[g_{3,1,3}+3{e_{3,1,3}}^{3}-g_{2,1,3}e_{3,1,3}/4]\omega^{4}
          +O(\omega^{6}).
     \end{equation}

Second, by applying the identity \eqref{principle_transformation} for $p=3$ we obtain
$\wp_{1,4}(\omega)=-\wp(\omega)+[\wp'(\omega)^{2}+\wp'(\omega_{2}/3)^{2}]/[
2(\wp(\omega)-\wp(\omega_{2}/3))]-4\wp(\omega_{2}/3)$. Using that $\wp(
\omega_{2}/3)=1/3$ as well as \eqref{diff_eq_wp} yields
     \begin{equation}
     \label{second_expansion_e_e_e}
          \wp_{1,4}(\omega)=1/\omega^{2}+[2/3-9g_{2}/20]\omega^{2}
          +[10/27-g_{2}/2-27g_{3}/28]\omega^{4}
          +O(\omega^{6}).
     \end{equation}

By identifying the expansions \eqref{first_expansion_e_e_e} and
\eqref{second_expansion_e_e_e} we obtain the expressions 
of $g_{2,1,3}$ and $g_{3,1,3}$ in terms of $g_{2}$, $g_{3}$ and $e_{3,1,3}$.

In addition, $e_{3,1,3}$ is obviously a solution of $4{e_{3,1,3}}^{3}
-g_{2,1,3}e_{3,1,3}-g_{3,1,3}=0$. If we use the expressions of
$g_{2,1,3}$ and $g_{3,1,3}$ obtained just above, we conclude
that ${e_{3,1,3}}^{3}+[9g_{2}/16-5/6]e_{3,1,3}+[35/108-27g_{3}/32-7g_{2}/16]=0$.
We can solve this equation (we recall that $g_{2}$ and $g_{3}$ are known explicitly
with respect to $z$) and, this way, we get $e_{3,1,3}$. Next we obtain $g_{2,1,3}$
and $g_{3,1,3}$, or, equivalently, $e_{1,1,3}$ and $e_{1+3,1,3}$. In particular, the expansion
\eqref{wp_13_wp_14} is now completely known and Theorem \ref{big_theo} \ref{prop_CGF_order_6_+} is proved.
\end{proof}

\section{Conclusions and perspectives}
\label{A_concluding_remark}

%
%
%
%
%
%
%
%

To conclude, we would like to mention some open problems
and perspectives related to this paper.

\begin{itemize}
\item[---] In the finite group case, finding the way to generate a vanishing differential equation
           for the counting function starting from
           Theorem \ref{explicit_integral} is an open problem; on the other hand,
           it is certainly possible to verify \emph{a posteriori} that the function
           satisfies a certain differential equation.
\item[---] In the infinite group case, it is an open problem to find the asymptotic, as $n\to \infty$, of $q_{i,j,n}$;
           see in particular the conjectures of Bostan and Kauers \cite{BKA} on the total numbers of walks  $\sum_{i,j\geq 0} q_{i,j,n}$.
           The singularity analysis of the expression of $Q(x,y;z)$ obtained in Theorem \ref{explicit_integral}---generically delicate, see \cite[Chapter F]{mythesis}---should lead to these results.
\item[---] Last but not least, proving from
           Theorem \ref{explicit_integral} the conjecture
           of Bousquet-Mélou and Mishna---namely, that in the infinite group case,
           $Q(x,y;z)$ is non-holonomic---is also an open problem.
           However, everything suggests that this conjecture is true---like the fact that all
           known examples of infinite group \cite{ALR,MM2} have non-holonomic counting functions,
           or else the link, suggested by Figure \ref{Tables},
           between the nature of $Q(x,y;z)$ and that of $w(x;z)$ and $\widetilde{w}(y;z)$.
           A way to prove this conjecture consists in studying precisely,
           via Theorem \ref{explicit_integral}, the singularities
           of $Q(x,y;z)$.
\end{itemize}

 \begin{figure}[!ht]
\begin{center}
\begin{tabular}{|c|c|c|c|c|}
\hline
   Group  & Covariance & Walks

  & {Nature of $Q$} & Nature of $w$ and $\widetilde{w}$ \\
  \hline
  \hline
  & & \includegraphics[width=11mm,height=11mm]{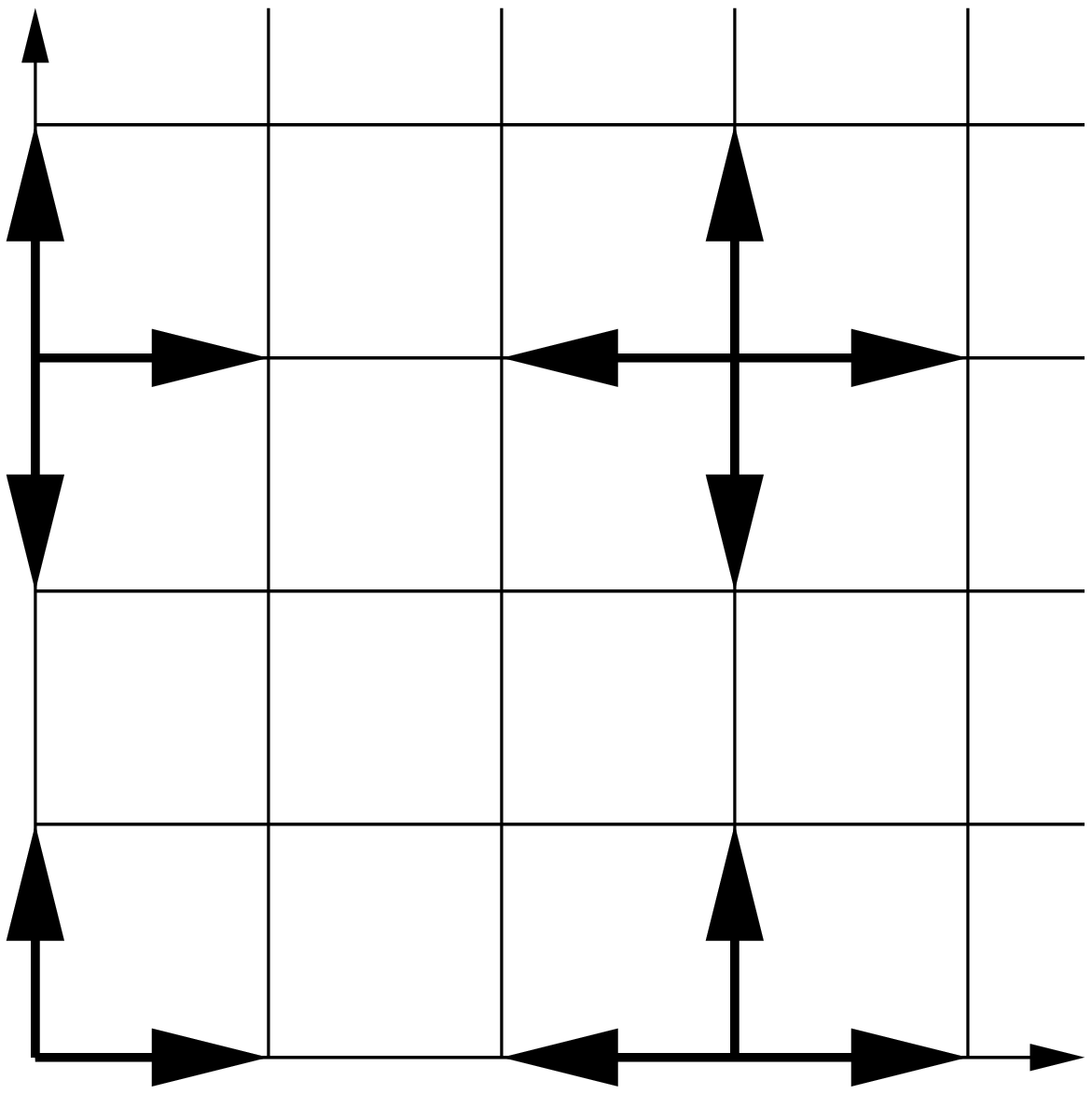}
  \includegraphics[width=11mm,height=11mm]{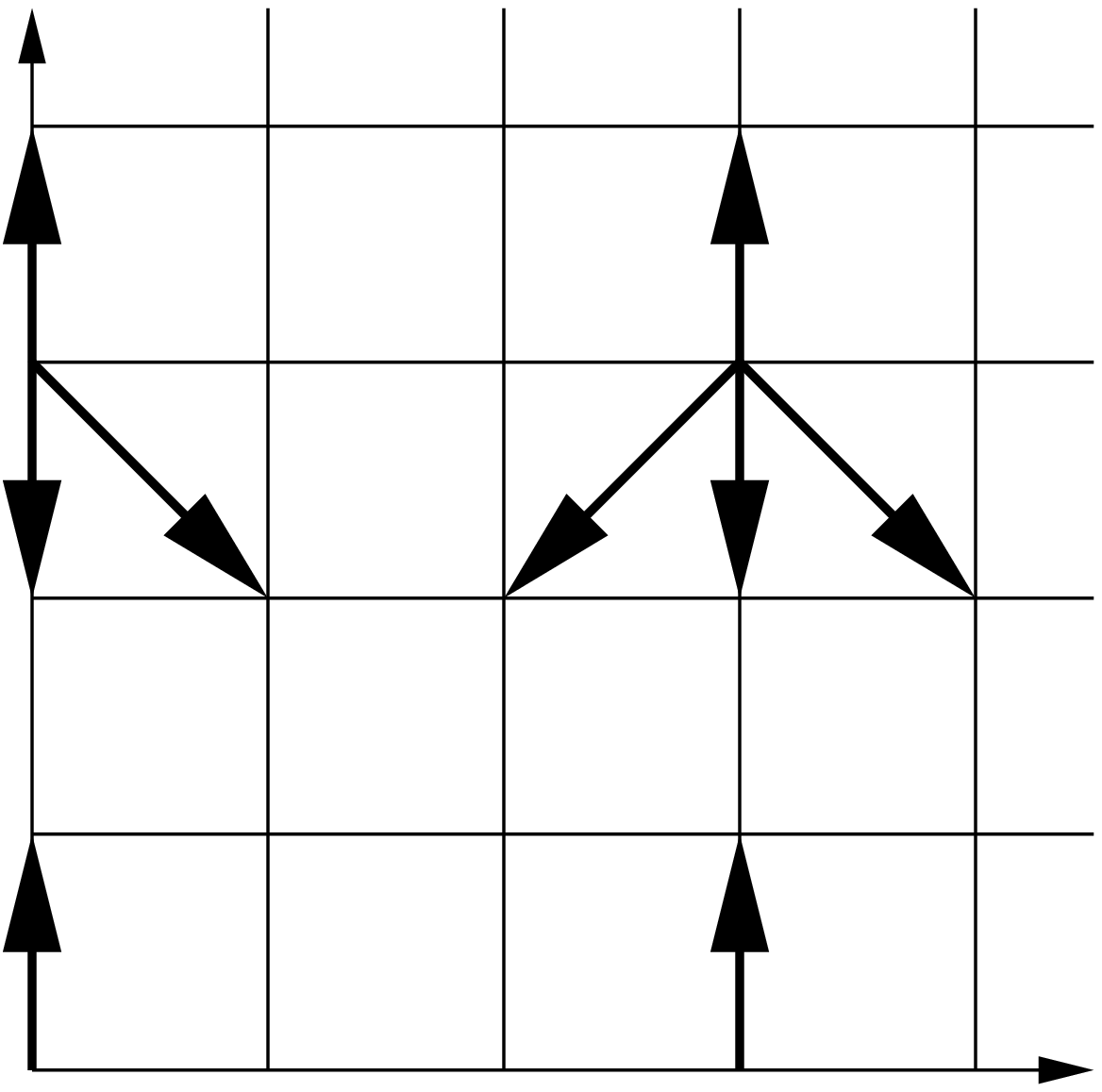}
    & holonomic & rational \\ 
   $4$ & $=0$  & and $14$ others \cite{BMM} 
    &  \cite{BMM,FR} & [Theorems \ref{nature_CGF} and \ref{big_theo} \ref{prop_CGF_order_4}] \\ 
   \hline
   
 $6$ & $<0$ &  \includegraphics[width=11mm,height=11mm]{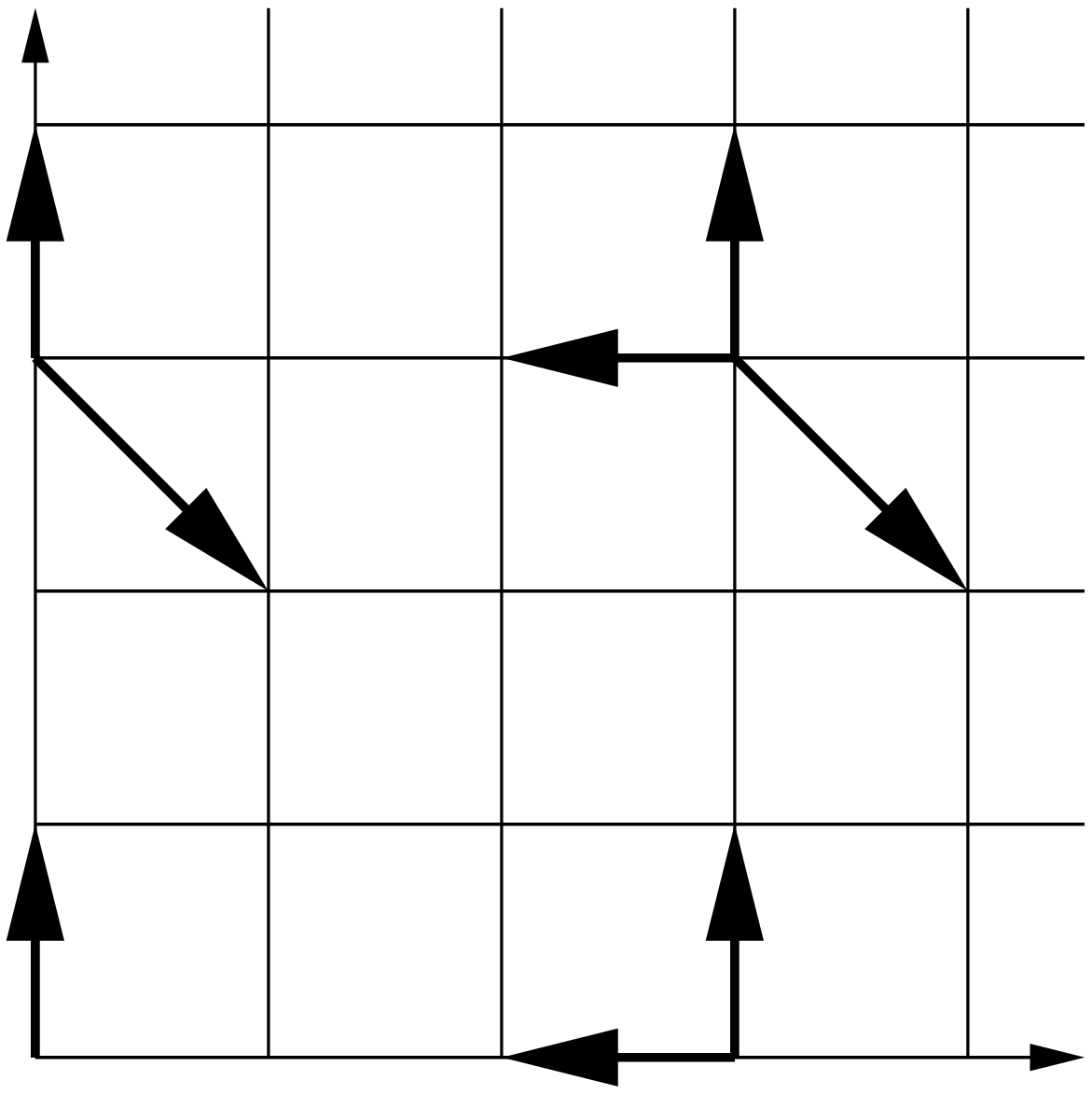}
    \includegraphics[width=11mm,height=11mm]{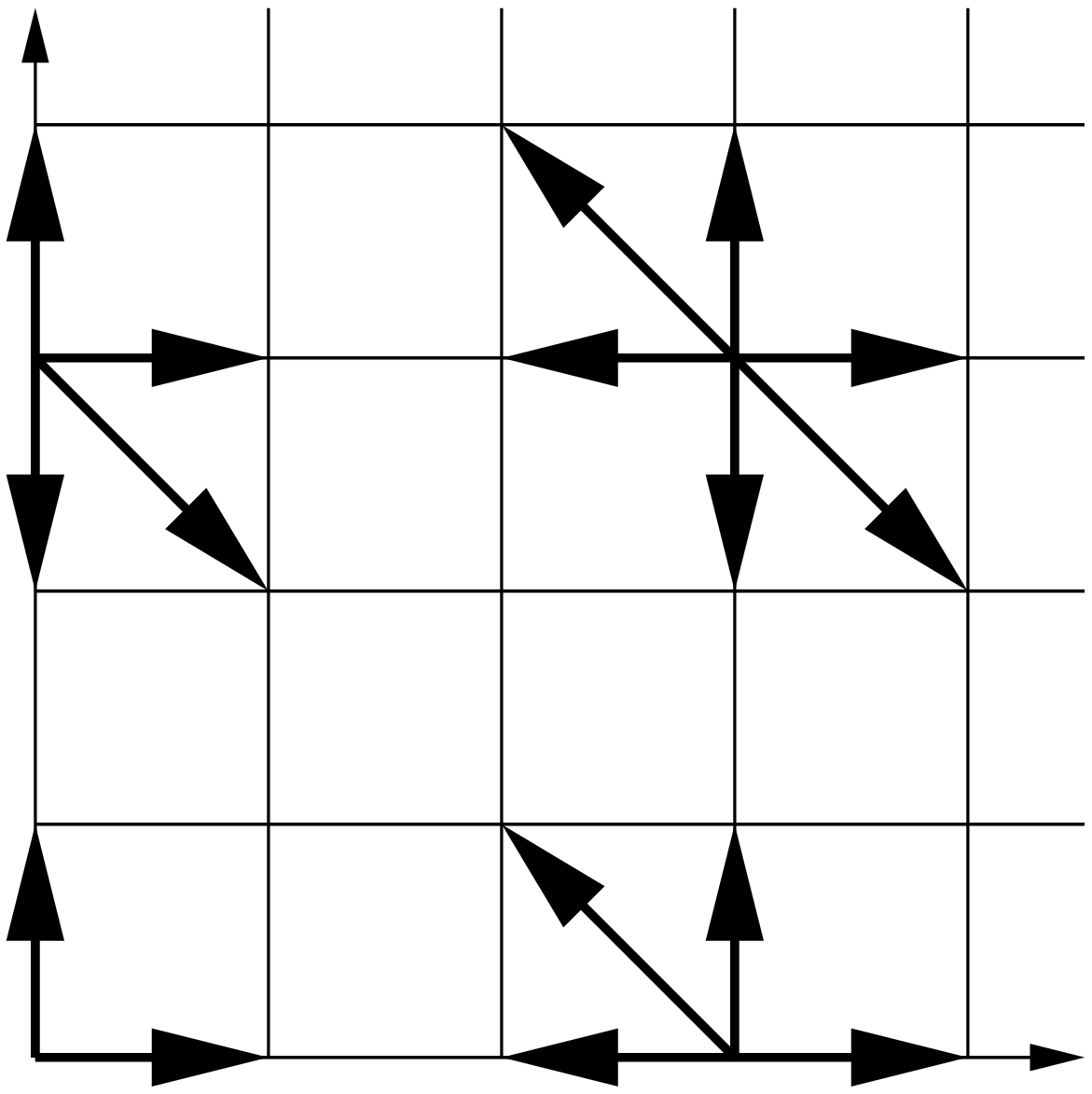}    & 
    \begin{minipage}[b]{1.8cm}
\begin{center}holonomic

\cite{BMM,FR} 
\end{center}
\end{minipage} &     \begin{minipage}[b]{4cm}
\begin{center}rational

[Theorems \ref{nature_CGF} and \ref{big_theo} \ref{prop_CGF_order_6_-}]
\end{center}
\end{minipage}
 \\
   \hline  
 $8$ & $<0$ &  \includegraphics[width=11mm,height=11mm]{gi_8_2.eps}    &     \begin{minipage}[b]{1.8cm}
\begin{center}holonomic

\cite{BMM,FR} 
\end{center}
\end{minipage}&    \begin{minipage}[b]{4cm}
\begin{center}rational

[Theorems \ref{nature_CGF} and \ref{big_theo} \ref{prop_CGF_order_8}]\end{center}
\end{minipage} \\
   \hline\hline
    
 $6$ & $>0$&   \includegraphics[width=11mm,height=11mm]{gi_6_4.eps}
    \includegraphics[width=11mm,height=11mm]{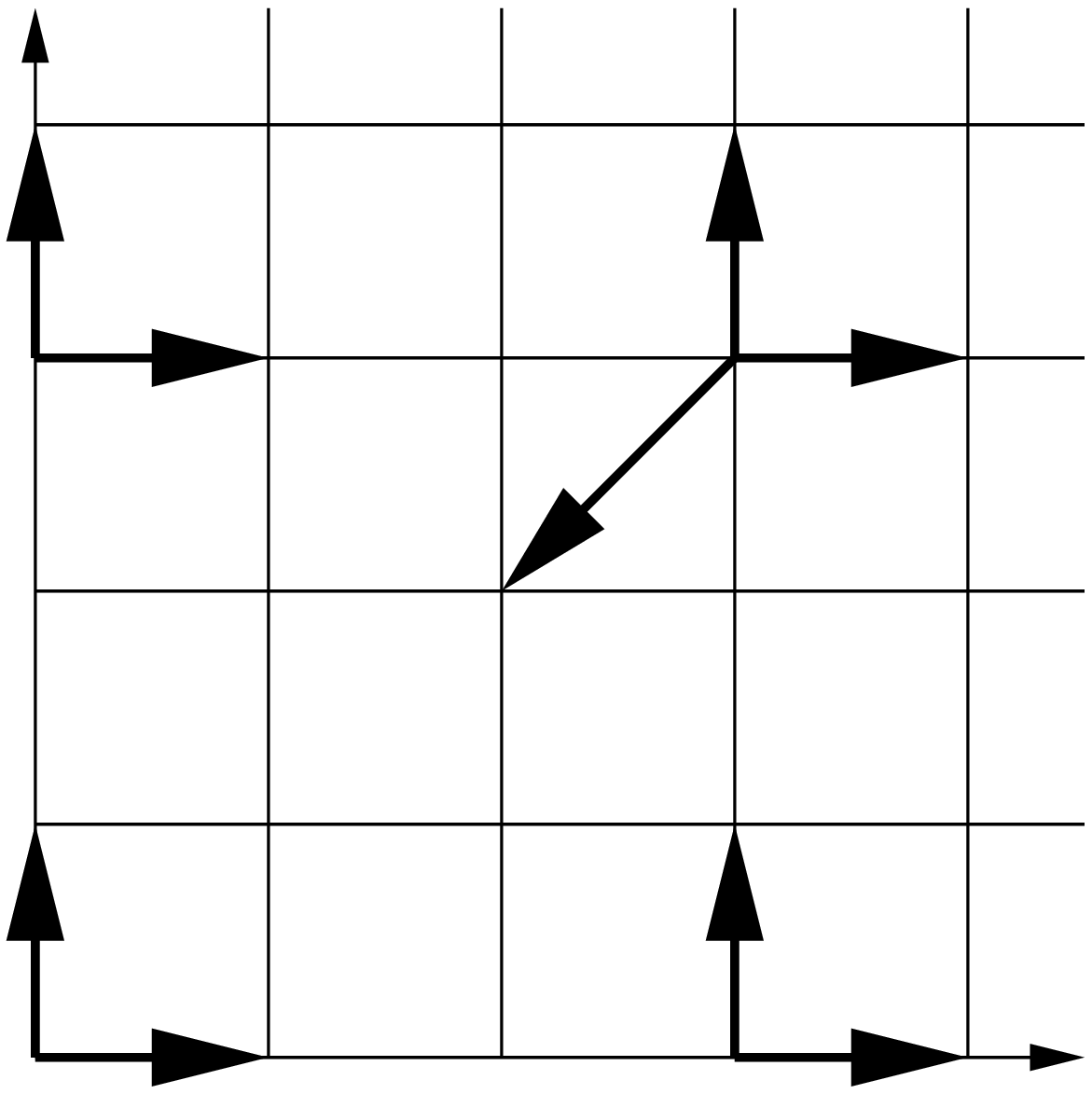}
    \includegraphics[width=11mm,height=11mm]{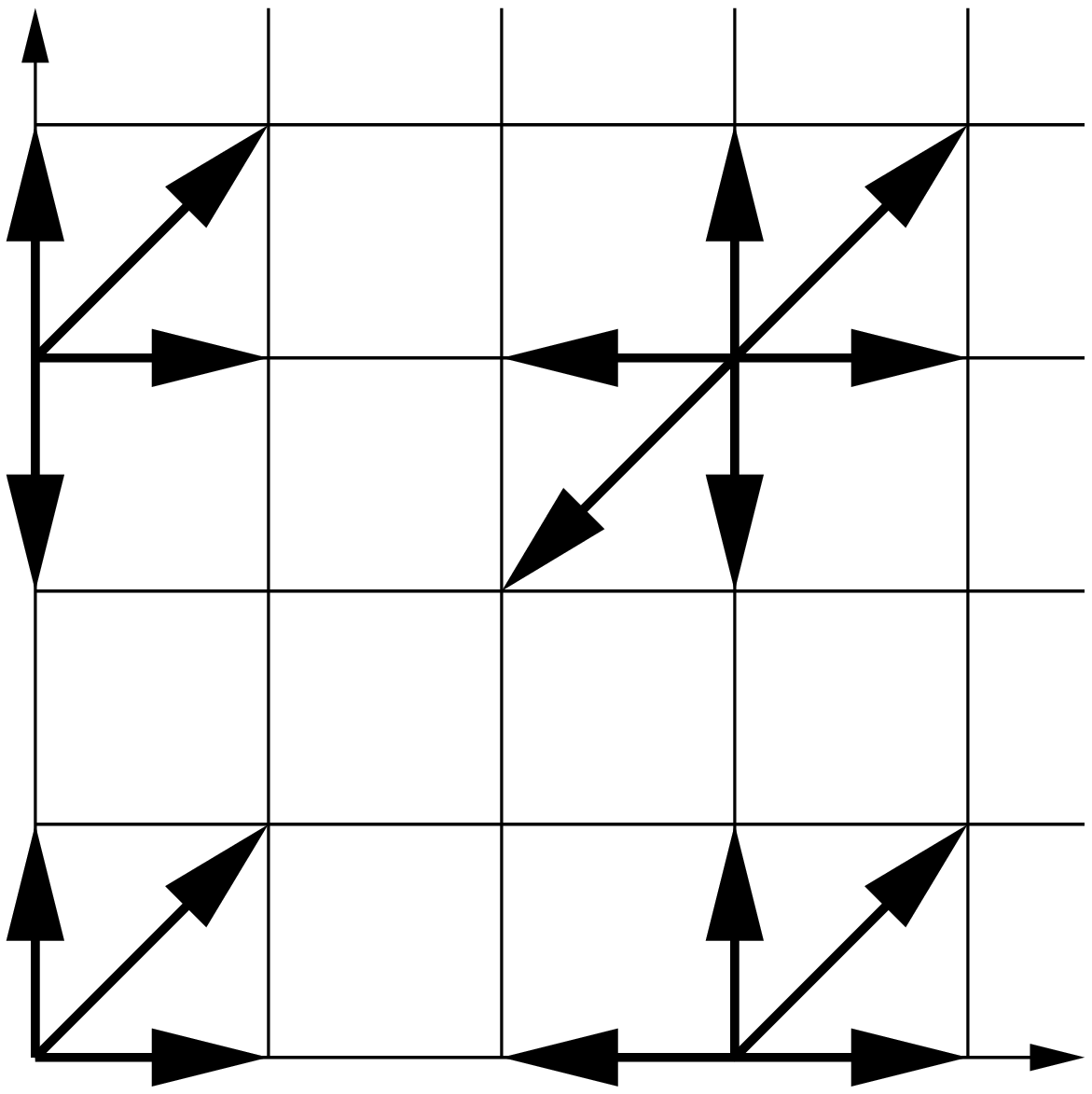}   & 
    \begin{minipage}[b]{1.8cm}
    \begin{center}algebraic
\cite{BMM,FR} 
\end{center}
\end{minipage} &     \begin{minipage}[b]{4cm}
\begin{center}algebraic

[Theorems \ref{nature_CGF} and \ref{big_theo} \ref{prop_CGF_order_6_+}]
\end{center}
\end{minipage}\\
      \hline
      
 $8$  &$>0$ &    \includegraphics[width=11mm,height=11mm]{gi_8_1.eps}    & 
 \begin{minipage}[b]{1.8cm}
 \begin{center}algebraic
\cite{BK,FR} 
\end{center}
\end{minipage} &    \begin{minipage}[b]{1.8cm}
\begin{center}algebraic

\cite{KRG}
\end{center}
\end{minipage} \\
 \hline    \hline  
   & \begin{minipage}[b]{1cm}
\begin{center}

$=0$

$<0$
\end{center}
\end{minipage}&     \includegraphics[width=11mm,height=11mm]{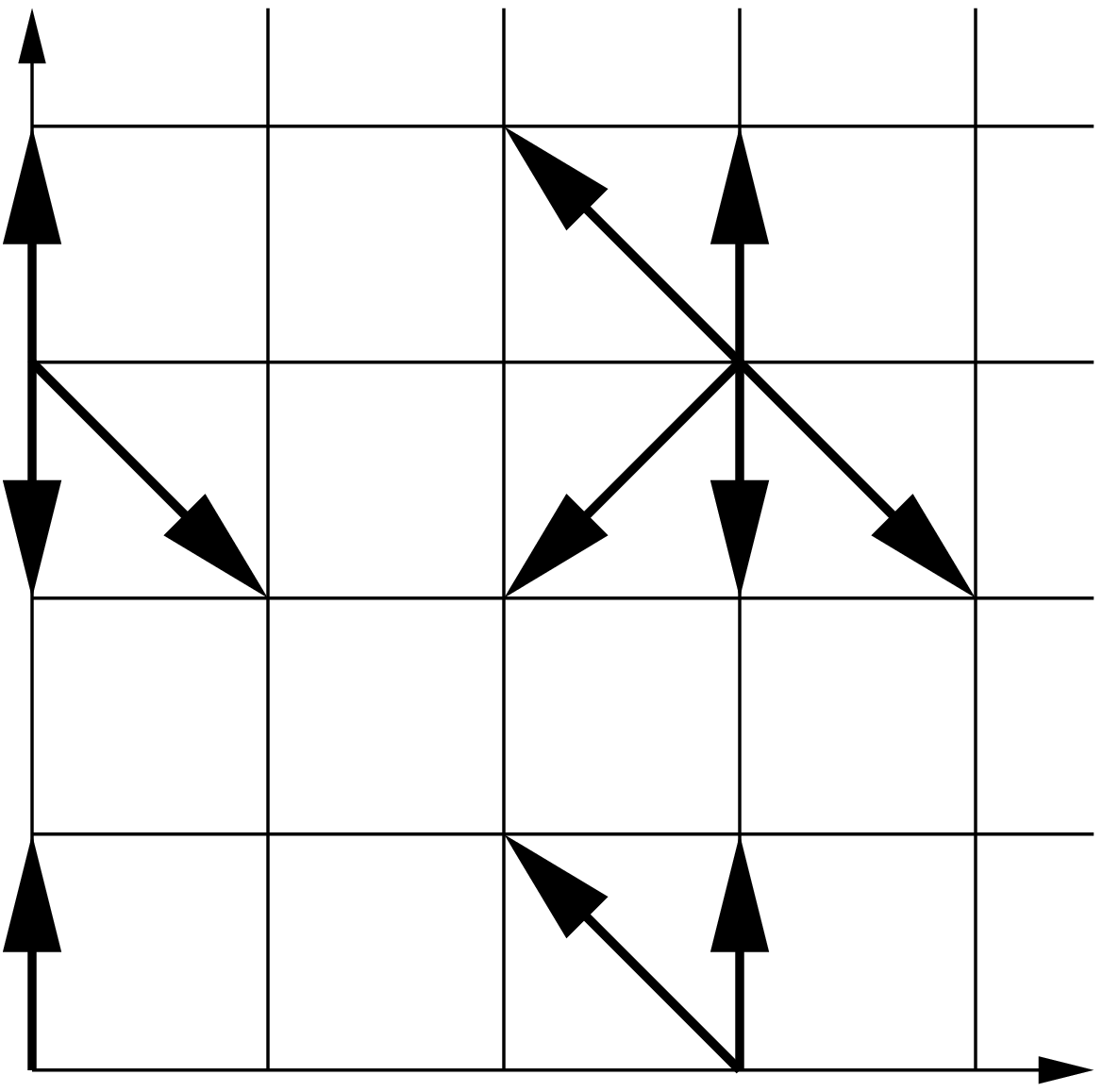}     &  &  non-holonomic \\
    $\infty$ &     $>0$ &   and $50$ others \cite{BMM}   & \textbf{?} & [Theorem \ref{nature_CGF}] \\
 \hline   
\end{tabular}

\bigskip

\caption{Comparison \emph{a posteriori} between the classification of the $74$
non-singular walks according to the nature of the CGFs $w$ and $\widetilde{w}$
and the classification (still incomplete) according to the nature of
the series $Q$, obtained from \cite{BK,BMM,FR}}
\label{Tables}\end{center}
\end{figure}

\medskip

{\small

\noindent{\it Acknowledgments.}
I would like to thank the members of the research team Algorithms of INRIA Rocquencourt,
as well as Guy Fayolle, for the interesting discussions we had together regarding the
topic of this article. I also would like to thank Irina Kurkova for her constant help and her
encouragements all along the elaboration of this work.
I thank Alexis Chommeloux for his
numerous remarks about the English. Finally, 
I must thank two anonymous referees for their so careful reading and their
valuable comments and suggestions.

}

\end{document}